\documentclass[11pt,paper=a4,bibliography=totoc]{scrartcl}

\PassOptionsToPackage{shortlabels}{enumitem}

\usepackage[mathfont=stix2,modernmargins]{mystyle-koma-enhanced}
\usepackage{needspace}

\renewcommand{\P}{\mathbb{P}}
\newcommand{\E}{\mathbb{E}}

\newcommand{\1}{\mathbf{1}}
\newcommand{\cR}{\mathcal{R}}
\newcommand{\cC}{\mathcal{C}}
\newcommand{\cK}{\mathcal{K}}
\newcommand{\Poi}{\operatorname{Poisson}}
\newcommand{\hitorder}{\preceq_{\mathrm{hit}}}
\newcommand{\di}{\,\mathrm{d}}
\DeclareMathOperator{\Aut}{Aut}

\declaretheorem[style=mystyleplain,name=Conjecture 1,numbered=no]{sourceconjecture}
\declaretheorem[style=mystyleplain,name=Theorem A,numbered=no]{theoremA}
\declaretheorem[style=mystyleplain,name=Theorem B,numbered=no]{theoremB}
\declaretheorem[style=mystyleplain,name=Theorem C,numbered=no]{theoremC}

\title{Critical-curve regularity
for finite-lifespan frog models\\ via local-to-global comparisons}

\author{
Christian M\"{o}nch \orcidlink{0000-0002-6531-6482}
\thanks{Independent Researcher, 64289 Darmstadt, Germany}\\
\texttt{cmoench25@gmail.com}
\and
Andreas Klippel
\thanks{Institut f\"ur Mathematik, Johannes Gutenberg-Universit\"at Mainz,
Germany}\\
\texttt{anklippe@uni-mainz.de}
}

\date{September 28, 2026}
\MSC{60K35, 60J10, 82B43}
\Keywords{frog model, local-to-global comparison, dependent percolation, critical curve, sharp phase transition, long-range random walk}

\hypersetup{
  pdftitle={Critical-curve regularity for finite-lifespan frog models via local-to-global comparisons},
  pdfauthor={Christian M\"onch and Andreas Klippel},
  pdfsubject={Probability theory: frog models and critical-curve regularity},
  pdfkeywords={frog model, local-to-global comparison, critical curve, sharpness},
  pdflang={en-GB}
}

\begin{document}
\maketitle

\begin{abstract}
We study the phase boundary of the finite-lifespan frog model.  For rate-one
continuous-time simple random walk on an infinite, connected, locally finite
transitive graph of superlinear growth, we prove that the critical-density
curve is continuous and strictly decreasing, with $-\log\lambda_c$ locally
bi-Lipschitz.  The inverse critical-lifespan curve has the corresponding
regularity wherever it is finite.  Thus the critical-curve conjecture of
Angel, de la Riva, Hermon, and Shi reduces to finiteness of the critical
lifespan at every positive density.  We identify the exact short-lifespan
limit $t\lambda_c(t)\to\Delta\log(1/(1-p_c(G)))$, where $\Delta$ is the
degree and $p_c(G)$ is the bond-percolation threshold, and extend
critical-curve regularity and sharpness to a class of long-range frog models.
The main tool is a local-to-global principle for activation processes
generated by independent finite rooted ranges: local one-hit comparison
implies comparison of global reachability and survival.
The comparison allows disconnected ranges and source-dependent laws, and therefore also applies to activation models beyond random walks on a fixed graph.
\end{abstract}

\section{Introduction}
\label{sec:introduction}

The frog model is built from a simple feedback rule: motion creates more
motion.  Some particles are active and move through a graph, while the others
sleep.  When an active particle reaches a new vertex, the particles waiting
there wake up and start moving as well.  In this way a single initial
activation may set off a cascade that travels through a large part of the
graph.  Early treatments of this type of dynamics go back to Telcs and Wormald
\cite{TelcsWormald1999}.  Since then, the model has been used to study how an
activated region grows through space
\cite{AlvesMachadoPopov2002Shape,RamirezSidoravicius2004}, whether activity
returns to its starting point infinitely often
\cite{HoffmanJohnsonJunge2016,HoffmanJohnsonJunge2017}, and how quickly a
finite graph can be activated
\cite{BenjaminiFontesHermonMachado2020}.  The model resembles the spread of
information or infection, but with an important geometric constraint: the
new particles are already placed throughout the network and can join the
process only after an active particle finds them.

In the classical frog model, an active frog can move forever.  Giving every
frog only a finite lifespan changes the basic question.  Activity can now die
out, so the cascade must create new motion quickly enough to replace what is
lost.  There are two natural ways to help it: place more frogs in the graph,
or allow each frog to walk for longer.  Phase transitions for frog models
with death were already found on Euclidean lattices and regular trees
\cite{AlvesMachadoPopov2002Phase}.  Angel, de la Riva, Hermon, and Shi later
made the tradeoff between the two resources the central object
\cite{AngelDeLaRivaHermonShi2026}.  Their choice of a Poisson number of frogs
and a deterministic walking time gives a particularly clean two-parameter
model: one parameter changes how much dormant activity is present, while the
other changes how far that activity can travel before it disappears.

This formulation leads naturally to a phase transition in terms of particle
density.  Angel et al.\ proved that this transition is sharp on transitive
graphs
\cite{AngelDeLaRivaHermonShi2026}.  Their result belongs to a broad tradition of
sharpness theorems in percolation and related models, see e.g.\
\cite{Menshikov1986,AizenmanBarsky1987,DuminilCopinTassion2016,
DuminilCopinRaoufiTassion2019,DuminilCopinRaoufiTassion2020,
CaiProcacciaZhang2022}.  Sharpness
gives strong information on the two sides of a fixed threshold, but does not
immediately provide insight into how the threshold itself moves when the other
parameter changes.
Understanding that motion is the purpose of the present paper.

Let $G$ be an infinite, connected, locally finite graph.  At every vertex we
place an independent $\Poi(\lambda)$ number of sleeping frogs.  Initially a
fixed vertex $o$ is activated, waking the frogs based there.  Every active
frog performs an independent rate-one continuous-time simple random walk.
When it visits a vertex, all frogs sleeping there wake up and start their own
walks.  Each frog is removed $t$ units of time after it is activated.  We write
$\cC_o^{\lambda,t}$ for the set of vertices activated from $o$, and let $\P$
denote the full product measure governing all frog counts and walks.  The
process survives if this set is infinite, and we write
\[
 \theta(\lambda,t)
 :=\P(|\cC_o^{\lambda,t}|=\infty).
\]
For a fixed lifespan $t$, the critical particle density is
\[
 \lambda_c(t)
 :=\sup\{\lambda\ge0:\theta(\lambda,t)=0\},
\]
while for a fixed density $\lambda$, the critical lifespan is
\[
 t_c(\lambda)
 :=\sup\{t\ge0:\theta(\lambda,t)=0\}.
\]

More frogs and more walking time can only help survival, so both critical
curves are nonincreasing.  Monotonicity alone, however, says very little about
their shape.  A flat piece would mean that extra walking time buys no
reduction in the density needed for survival, while a jump would mark a point where an
arbitrarily small increase in lifespan suddenly lowers that density by a
macroscopic amount.  The real question is therefore whether the exchange
between density and lifespan is genuine and stable at every point.

Angel et al.\ posed this question for transitive graphs, where every vertex
looks the same under graph symmetries.  A transitive graph has
\emph{superlinear growth} if, for one and hence every vertex $o$,
\[
 \limsup_{n\to\infty}\frac{|B_G(o,n)|}{n}=\infty,
\]
where $B_G(o,n)$ is the ball of radius $n$ around $o$.  This is also a natural
geometric threshold in ordinary percolation.  Benjamini and Schramm
conjectured that every quasi-transitive graph of superlinear growth has a
nontrivial Bernoulli percolation threshold
\cite{BenjaminiSchramm1996}; this was proved by Duminil-Copin, Goswami,
Raoufi, Severo, and Yadin
\cite{DuminilCopinGoswamiRaoufiSeveroYadin2020}.  For the finite-lifespan frog
model, Angel et al.\ formulated the following stronger regularity problem
\cite[Conjecture~1.11]{AngelDeLaRivaHermonShi2026}.

\Needspace{9\baselineskip}
\begin{sourceconjecture}
\label{conj:angel-critical-curves}
On every transitive graph of superlinear growth, the maps

\[
 t\longmapsto\lambda_c(t)
 \qquad\text{and}\qquad
 \lambda\longmapsto t_c(\lambda)
\]

are continuous and strictly decreasing.
\end{sourceconjecture}

Our main result settles the regularity of the finite part of this phase
boundary quantitatively and isolates its lower-density endpoint as the only
possible obstruction to the full conjecture.

\Needspace{14\baselineskip}
\begin{theoremA}
\label{thm:introduction-critical-curves}
Let $G$ be an infinite, connected, locally finite, transitive graph of
superlinear growth, and let the frogs perform rate-one continuous-time simple
random walk.  Then
\begin{enumerate}[label=\textup{(\roman*)},widest=iii,
  topsep=0.3em,itemsep=0.2em]
\item $\lambda_c$ is continuous and strictly decreasing on
$(0,\infty)$, and $-\log\lambda_c$ is locally bi-Lipschitz.

\item Setting
\[
 \lambda_\infty:=\lim_{t\to\infty}\lambda_c(t),
\]
the map $t_c$ is finite, continuous, strictly decreasing, and locally
bi-Lipschitz on $(\lambda_\infty,\infty)$.

\item If $\Delta$ is the degree of $G$ and $p_c(G)$ is its
bond-percolation threshold, then
\[
 L_G
 :=\lim_{t\downarrow0}t\lambda_c(t)
 =\lim_{\lambda\to\infty}\lambda t_c(\lambda)
 =\Delta\log\frac1{1-p_c(G)}.
\]
\end{enumerate}
\end{theoremA}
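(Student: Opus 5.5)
The plan is to derive everything from the local-to-global comparison principle announced in the abstract. Local one-hit comparison of two range laws (for each source, the set of vertices hit by a single frog's range, dominated in the appropriate hitting order $\hitorder$) implies comparison of global reachability and survival. The key device is a \emph{scaling identity} between density and lifespan. A Poisson$(\lambda)$ pile of frogs with lifespan $t$ induces, at each vertex $v$, an independent random finite range $R_v$, the union of the ranges of the frogs sleeping at $v$. Thinning by $e^{-\varepsilon}$ lowers the density to $e^{-\varepsilon}\lambda$. Lengthening the lifespan to $(1+\delta)t$ enlarges each frog's range, and I will compare $\lambda$ at lifespan $(1+\delta)t$ with $e^{c\delta}\lambda$ at lifespan $t$ in the hitting order. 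A walk of duration $(1+\delta)t$ is a concatenation of a walk of duration $t$ and a walk of duration $\delta t$. Conversely, the set of vertices hit by $\Poi(e^{c\delta}\lambda)$ walks of duration $t$ should dominate, vertex by vertex in the one-hit sense, the set hit by $\Poi(\lambda)$ walks of duration $(1+\delta)t$. The reason is that the extra time $\delta t$ creates only an $O(\delta)$ fraction of new hits, which can be paid for by $O(\delta)\lambda$ extra frogs, using the strong Markov property and transitivity. Combined with the trivial monotonicity in each parameter, two-sided inequalities of this form give
\[
 \lambda_c((1+\delta)t)\le \lambda_c(t)\le e^{C\delta}\lambda_c((1+\delta)t),
\qquad
 \lambda_c((1+\delta)t)\le e^{-c\delta}\lambda_c(t),
\]
locally uniformly in $t$. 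The second inequality is strict decrease with a quantitative rate. It comes from the converse comparison: an extra lifespan $\delta t$ is at least as good as an $e^{c\delta}$ increase of density. This holds because each frog, by continuing for time $\delta t$, visits with probability bounded below a vertex that behaves like a fresh independent copy. Here superlinear growth (via transience or at least nondegenerate spreading) ensures the new hits are not already covered. Together these give that $-\log\lambda_c$ is locally bi-Lipschitz, proving (i).

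For (ii), first I show $\lambda_c(t)>0$ and that $\lambda_c$ is finite for $t>0$. Finiteness comes from comparison with Bernoulli bond percolation, which has $p_c(G)<1$ by Duminil-Copin--Goswami--Raoufi--Severo--Yadin \cite{DuminilCopinGoswamiRaoufiSeveroYadin2020}. By (i), $\lambda_c$ is a continuous strictly decreasing bijection from $(0,\infty)$ onto $(\lambda_\infty,\lim_{t\downarrow0}\lambda_c(t))$, and the limit at $0$ is $\infty$ by (iii). Then $t_c$ is its inverse on $(\lambda_\infty,\infty)$. Here I need that $\theta(\lambda,t)>0$ iff $\lambda>\lambda_c(t)$ up to the boundary, which follows from monotonicity in both parameters. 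The bi-Lipschitz property transfers to the inverse.

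For (iii), consider $t\downarrow0$ with $\lambda=L/t$. A frog at $v$ makes a single jump before dying with probability about $t$, and two or more jumps with probability $O(t^2)$. So the number of frogs at $v$ jumping along a given edge is approximately $\Poi(\lambda t/\Delta)$, independently over the $\Delta$ edges. An edge $\{v,w\}$ is therefore "open from $v$" with probability $1-e^{-L/\Delta}$. The resulting activation process is oriented-edge percolation in which each direction is open independently. From $o$ this has the same cluster law as bond percolation with parameter $1-e^{-L/\Delta}$, since exploration uses each edge only in the direction it is first crossed. Equating $1-e^{-L/\Delta}=p_c$ gives $L=\Delta\log\frac1{1-p_c}$. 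To make this rigorous, I apply the local-to-global comparison twice. The first comparison dominates the frog range law by the one-jump oriented-edge law with parameter $(1+\eta)L$, absorbing multi-jump frogs via the $O(t^2)$ error, which is negligible after an $e^{\eta}$ density adjustment. The second is the reverse comparison with parameter $(1-\eta)L$. Sharpness of percolation at $p_c$ gives both bounds. The identity for $\lambda t_c(\lambda)$ follows from inverting via (ii).

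The main obstacle is the upper comparison in the scaling step and in (iii). One must dominate ranges of longer-lived (or multi-jump) frogs by unions of shorter ranges at slightly higher density. This is where disconnected ranges and source-dependent laws are essential: the extra piece of a walk starts at a random endpoint rather than at the source. I would handle it by letting the comparison range be rooted at the source but include an independent shifted piece, using transitivity to re-root.
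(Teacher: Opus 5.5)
\textbf{Continuity.} Your inequality $\lambda_c(t)\le e^{C\delta}\lambda_c((1+\delta)t)$ rests on the one-hit domination $\mu_{\lambda,(1+\delta)t}\hitorder\mu_{e^{C\delta}\lambda,t}$. This domination is false for every finite $C$. By Poisson thinning it is equivalent to $h_{(1+\delta)t}(x,A)\le e^{C\delta}h_t(x,A)$ for all finite targets $A$. Take $A=\{y\}$ with $d_G(x,y)=n$. Then $h_u(x,A)=\sum_{k\ge n}\P_x(H_A=k)\P(N_u\ge k)$. The Poisson tail ratio $\P(N_{u'}\ge k)/\P(N_u\ge k)$, $u'>u$, is nondecreasing in $k$, so $h_{(1+\delta)t}(x,A)/h_t(x,A)\ge\P(N_{(1+\delta)t}\ge n)/\P(N_t\ge n)\sim e^{-\delta t}(1+\delta)^n\to\infty$. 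Extra time multiplies the hit probability of far targets by an unbounded factor. Since $\hitorder$ is transitive, interposing auxiliary ranges such as your re-rooted shifted pieces cannot rescue a sitewise comparison in this direction. The forward direction works precisely because $\P(N_s\ge n)\le\frac{1-e^{-s}}{1-e^{-t}}\P(N_t\ge n)$ has its worst case at $n=1$. That clock bound, and not ``fresh copies'' or growth, is the actual reason your strict-decrease inequality holds.

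The paper obtains the reverse direction only globally, in finite volume. Russo formulas for $f_\Lambda(\lambda,t)=\P(o\rightsquigarrow_\Lambda\Lambda^c)$ write $\partial_\lambda f_\Lambda$ and $\partial_t f_\Lambda$ as sums over the non-exit set $S$ of internal-activation weights $a^S(x)$ times $h_t^S(x)$, respectively $\lambda q_t^S(x)$. The Gamma-mixture bound $tq_t^S(x)\le\E_x[N_t\1_{\{\tau_{S^c}\le t\}}]$ is combined with the activation-weighted exit-jump estimate of Angel et al.\ (their Lemma~3.3). That estimate cannot hold pointwise in $x$, since an exit from depth $n$ needs at least $n$ jumps. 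Together they give $\partial_t f_\Lambda\le\lambda C(\Delta,\lambda,t)t^{-1}\partial_\lambda f_\Lambda$ uniformly in $\Lambda$. Integrating along $\lambda_r=\lambda_0e^{-M(r-s)}$ and exhausting yields $\theta(\lambda_0e^{-M(t-s)},t)\le\theta(\lambda_0,s)$. This is the missing input for continuity and for the lower Lipschitz bound.

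\textbf{Lower bound in (iii).} Your first comparison would dominate the frog range by the one-jump oriented-edge law at parameter $(1+\eta)L$. This is impossible in the one-hit order. For $A=\{y\}$ with $d_G(x,y)=2$, the frog range hits $A$ with positive probability, while the one-jump range never does. An order that is pointwise in the target cannot absorb an additive $O(t)$ error by rescaling the density. The multi-jump frogs must therefore remain in the model, and one has to prove that the mixed model is subcritical.

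The paper does this as follows, with $\lambda=c/(1-e^{-t})$ and $c<\Delta\log\frac{1}{1-p_c(G)}$. Poisson marking splits each cloud into two independent populations. The $N=1$ population is exactly directed edge percolation at parameter at most $1-e^{-c/\Delta}<p_c(G)$. The $N\ge2$ population has a macro-edge kernel with row sums at most $\lambda\E[N\1_{\{N\ge2\}}]=ct$. A connection expansion, using BK in the edge field and independence over distinct sources, gives $\E[|\cC_o|]\le\chi/(1-ct\chi)$. Here $\chi<\infty$ is the subcritical bond-percolation susceptibility from Duminil-Copin--Tassion, and this is where sharpness actually enters. Your upper bound via first-jump edges and your inversion steps for (ii) and for $\lambda t_c(\lambda)$ are fine.
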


Theorem~A reduces Conjecture~1 to the additional assertion
$\lambda_\infty=0$, or equivalently, finiteness of $t_c(\lambda)$ at every
positive density.  Its three parts are proved in
Theorem~\ref{thm:continuous-critical-density} and
Propositions~\ref{prop:inverse-critical-continuity}
and~\ref{prop:small-lifespan-limit}, respectively.

In \cite{AngelDeLaRivaHermonShi2026} it was shown that $\lambda_\infty=0$ on
nonamenable transitive graphs and on transitive graphs of superlinear
polynomial growth.  Theorem~A therefore resolves Conjecture~1 on these
classes.  Their polynomial-growth argument uses the finite-scale structure
theory of Tessera and Tointon \cite{TesseraTointon2021}; related geometric
ideas also occur in locality results for percolation
\cite{ContrerasMartineauTassion2023}.  We study the endpoint question,
immortal frogs, and quantitative lifespan
bounds in a companion manuscript in preparation.

The short-lifespan constant in Theorem~A(iii) arises because only the first
jump of each frog matters at leading order.  A frog jumps at least once
with probability $1-e^{-t}\sim t$.  Thus, as $t\downarrow0$ with
$\lambda t\to L$, Poisson thinning gives a mean of
$\lambda(1-e^{-t})/\Delta\to L/\Delta$ frogs making their first jump from a
given vertex to a given neighbour.  If each frog is stopped after this
jump, the set of vertices activated from the root has the same law as the
root cluster of independent bond percolation, with parameter tending to
$1-e^{-L/\Delta}$.  Equating this with $p_c(G)$ gives
$L_G=\Delta\log(1/(1-p_c(G)))$.  The expected number of frogs making two or
more jumps is $O(\lambda t^2)=O(t)$ per vertex in this scaling, and the formal proof
uses subcritical susceptibility to show that their contribution does not
change the limiting constant.

Our proofs require comparisons that are not supplied by
monotonicity or by existing sharpness theory.  All frogs born at the same
vertex produce one dependent collection of connections, and a single walk
may connect vertices at arbitrarily large distances.  Standard enhancement
arguments for finite-range percolation
\cite{AizenmanGrimmett1991} therefore do not apply directly.  There is also a
second difficulty.  Strict decrease asks whether extra walking time can
compensate for a loss of frogs.  Continuity asks for control in the opposite
direction: how many extra frogs are needed to compensate for a small loss of
time.  The two parts of the conjecture thus require two comparisons pointing
in opposite directions.

For the forward comparison, we package all paths started from one vertex into
the finite set of vertices that they visit before they die.  We then prove
that a local
ordering of these sets propagates through the whole activation cascade.  This
shows that a definite increase in lifespan compensates for a definite loss
of density and rules out flat pieces.  For the reverse comparison, we work in
a finite region and compare the effect of adding frogs with the effect of
extending their walks.  A bound that is uniform in the region rules out jumps
after passage to the infinite graph.  Together, the two estimates give the
regularity of the finite phase boundary.

We believe that the two comparison results that drive the proof of Theorem~A
are of independent interest.  We begin with the more general one, which
separates the randomness created at a single source from the global cascade
that it may produce.  In fact, the principle can be formulated in a more
general setting.  Let $V$ be a countable set.  On a product probability space
with measure $\P$, let $R=(R_a)_{a\in V}$ and
$\widetilde R=(\widetilde R_a)_{a\in V}$ be two fields of finite random sets,
each having independent coordinates and satisfying
$a\in R_a\cap\widetilde R_a$ almost surely for every $a\in V$.  Given either
field $F=(F_a)_{a\in V}$, draw a directed edge $a\to z$ whenever $z\in F_a$.
For a finite set $\Lambda\subset V$, a vertex $x\in\Lambda$, and a set
$B\subset V\setminus\{x\}$, write $x\rightsquigarrow_\Lambda^F B$ if there is
a directed path from $x$ to $B$ whose non-final vertices lie in $\Lambda$, and
let $\cC_x^F$ be the unrestricted out-cluster of $x$.

\Needspace{15\baselineskip}
\begin{theoremB}
\label{thm:introduction-local-to-global}
Let $R$ and $\widetilde R$ be as above, and assume that, for every $a\in V$
and every finite $A\subset V\setminus\{a\}$,
\[
 \P(R_a\cap A\ne\varnothing)
 \le
 \P(\widetilde R_a\cap A\ne\varnothing).
\]
For every finite $\Lambda\subset V$, every $x\in\Lambda$, and every
$B\subset V\setminus\{x\}$, it then follows that
\[
 \P(x\rightsquigarrow_\Lambda^R B)
 \le
 \P(x\rightsquigarrow_\Lambda^{\widetilde R} B).
\]
Moreover, for every $x\in V$,
\[
 \P(|\cC_x^R|=\infty)
 \le
 \P(|\cC_x^{\widetilde R}|=\infty).
\]
\end{theoremB}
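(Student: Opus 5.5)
Throughout, write $F$ for either field, and say that a vertex $a\in\Lambda$ is \emph{explored} if it is reached by the exploration started at $x$; the exploration only ever reads the ranges $F_a$ of explored vertices $a\in\Lambda$.

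\textbf{Approach.} We prove the finite statement by replacing one coordinate at a time (a hybrid argument), and then pass to survival by a limit.

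\textbf{Step 1: hybrid fields.} Fix $\Lambda$, $x$ and $B$. The event $\{x\rightsquigarrow_\Lambda^F B\}$ depends only on $(F_a)_{a\in\Lambda}$. Since $\Lambda$ is finite, we may enumerate $\Lambda=\{a_1,\dots,a_n\}$. For $0\le k\le n$ let $F^{(k)}$ be the field that uses $\widetilde R_{a_i}$ for $i\le k$ and $R_a$ otherwise, all coordinates independent. It then suffices to show that changing the single coordinate $a=a_{k+1}$ from $R_a$ to $\widetilde R_a$ does not decrease the probability.

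\textbf{Step 2: one coordinate.} Condition on all other coordinates $(F_b)_{b\ne a}$, which are independent of $F_a$. Whether $x$ reaches $B$ is then a monotone increasing function of the set $F_a$. We distinguish two cases.
\begin{itemize}
\item If $a=x$: the event holds iff $F_x\cap A\neq\varnothing$, where $A$ is the set of $z\ne x$ that either lie in $B$ or lie in $\Lambda$ and reach $B$ without passing through $x$. Reaching $B$ from $z$ through $x$ is useless, since it would require $F_x$ anyway. Here $A$ is determined by the other coordinates and $A\subset V\setminus\{x\}$.
\item If $a\ne x$: let $E_1$ be the event that $x$ reaches $B$ while avoiding $a$ as a non-final vertex; $E_1$ does not depend on $F_a$. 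Otherwise the event holds iff $x$ reaches $a$ avoiding $B$ (event $E_2$, also independent of $F_a$) and $F_a\cap A\ne\varnothing$, with $A$ defined as above with $a$ in place of $x$.
\end{itemize}
In both cases the conditional probability has the form $\1_{E_1}+\1_{E_1^c\cap E_2}\P(F_a\cap A\ne\varnothing)$. The set $A$ may be infinite because $B$ may be infinite. In that case we use continuity from below: $\P(F_a\cap A\neq\varnothing)=\lim_m\P(F_a\cap A_m\ne\varnothing)$ over finite $A_m\uparrow A$, which holds because $F_a$ is a.s. finite. The hypothesis then gives the comparison. Integrating over the other coordinates yields $\P(x\rightsquigarrow^{F^{(k)}}_\Lambda B)\le \P(x\rightsquigarrow^{F^{(k+1)}}_\Lambda B)$, and chaining over $k$ proves the first claim.

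\textbf{Step 3: survival.} Take finite $\Lambda_n\uparrow V$ containing $x$, and let $B_n=V\setminus\Lambda_n$. Because every range is finite and contains its source, the out-cluster is infinite iff $x\rightsquigarrow^F_{\Lambda_n}B_n$ for every $n$. Indeed, if the cluster is infinite it is not contained in $\Lambda_n$, so some path leaves $\Lambda_n$; its first exit gives the required path. Conversely, if the cluster is finite it lies in some $\Lambda_n$. These events decrease in $n$, since a path exiting $\Lambda_{n+1}$ first exits $\Lambda_n$. Taking limits in the finite inequality proves the second claim.

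\textbf{Main obstacle.} The main obstacle is Step 2: verifying that the dependence on a single coordinate genuinely reduces to a one-hit event $\{F_a\cap A\ne\varnothing\}$ with $A$ measurable with respect to the other coordinates. Particular care is needed for the path-restriction conventions (non-final vertices in $\Lambda$, and $B$ excluded from $\Lambda$ or not) and for the case $a\in B$.
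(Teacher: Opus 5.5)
Your proof is correct and follows essentially the same route as the paper. Both replace one source law at a time between the two product fields. Both show that, conditional on the other ranges, the event is either constant or a one-hit event $\{F_a\cap A\neq\varnothing\}$; your $E_1$/$E_2$ split is exactly the paper's deletion of the outgoing edges from $a$. Both then use continuity from below for infinite targets, and exhaustion by $\Lambda_n\uparrow V$ with continuity from above for survival.

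A minor point: neither the continuity-from-below step nor the equivalence in Step 3 actually needs the ranges to be finite. The events $\{F_a\cap A_m\neq\varnothing\}$ increase to $\{F_a\cap A\neq\varnothing\}$ for any set $F_a$. So those justifications are unnecessary but harmless.
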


The point of Theorem~B is that it asks much less than a coupling in which one
entire range contains the other, which would correspond to ordinary
stochastic domination.  At a newly reached source, global
reachability needs only one local piece of information: whether the range
hits a suitable target set.  The proof reveals the source ranges one at a
time.  Once all other ranges are fixed, the event under consideration is
either already decided or depends on the exposed range only through such a
one-hit question.  Replacing the ranges successively proves the comparison in
a finite region, and an exhaustion gives the survival statement.

Theorem~B generalises a nearest-neighbour local-to-global comparison of
B\"aumler et al.\ \cite{BaeumlerJahnelKoepplLodewijksReevesTobias2026}.  The
ranges in either field may be disconnected, may contain arbitrarily long
edges, and may have different laws at different sources.  No ambient graph
or symmetry is required.  The result is also related to stochastic comparison
principles developed specifically for frog models \cite{JohnsonJunge2018}
and to Poisson-zoo and random-range percolation models
\cite{RathRokob2022,PeteRokob2025,BetzDrewitzKlippelMoench2026}.  In those
models the sampled sets are usually joined without direction, whereas here direction
matters because a range can be used only after its source has been
activated.

We now return to the frog model.  Here the one-hit probability in Theorem~B
has a simple form because the number of frogs at each source is Poisson.  Let
$Q$ be a Markov kernel on $V$.  Independently at every source $a$, take a
$\Poi(\lambda)$ number of independent rate-one continuous-time $Q$-chains and
let $R_a^{\lambda,t,Q}$ consist of $a$ together with all vertices visited by
these chains by time $t$.  Write
$R^{\lambda,t,Q}=(R_a^{\lambda,t,Q})_{a\in V}$.  For a finite target
$A\subset V\setminus\{a\}$,
\[
 \P(R_a^{\lambda,t,Q}\cap A\ne\varnothing)
 =1-\exp\{-\lambda h_t^Q(a,A)\},
\]
where $h_t^Q(a,A)$ is the probability under $\P$ that a rate-one $Q$-chain
started at $a$ hits $A$ by time $t$.
For this field, abbreviate
$x\rightsquigarrow_\Lambda^{R^{\lambda,t,Q}}B$ to
$x\rightsquigarrow_\Lambda^{\lambda,t,Q}B$, and write
$\cC_x^{\lambda,t,Q}:=\cC_x^{R^{\lambda,t,Q}}$.

If a connected, locally finite reference graph $G$ is given and $Q$ is
equivariant under a transitive subgroup of $\Aut(G)$, fix $o\in V$ and put
\[
 \begin{aligned}
 \theta_Q(\lambda,t)
 &:=\P(|\cC_o^{\lambda,t,Q}|=\infty),\\
 \lambda_c^Q(t)
 &:=\sup\{\lambda\ge0:\theta_Q(\lambda,t)=0\},\\
 t_c^Q(\lambda)
 &:=\sup\{t\ge0:\theta_Q(\lambda,t)=0\}.
 \end{aligned}
\]
For $0<s<t$, set
\[
 \rho(s,t):=\frac{1-e^{-s}}{1-e^{-t}}.
\]

\Needspace{24\baselineskip}
\begin{theoremC}
\label{thm:introduction-clock-comparison}
For every choice of a Markov kernel $Q$ on $V$, parameters $\lambda\ge0$ and
$0<s<t<\infty$, a source $a\in V$, and a finite target
$A\subset V\setminus\{a\}$,
\[
 \P(R_a^{\lambda,s,Q}\cap A\ne\varnothing)
 \le
 \P(R_a^{\rho(s,t)\lambda,t,Q}\cap A\ne\varnothing).
\]
Consequently, for every finite $\Lambda\subset V$, every
$x\in\Lambda$, and every $B\subset V\setminus\{x\}$,
\[
 \P(x\rightsquigarrow_\Lambda^{\lambda,s,Q}B)
 \le
 \P(x\rightsquigarrow_\Lambda^{\rho(s,t)\lambda,t,Q}B),
\]
and, for every $x\in V$,
\[
 \P(|\cC_x^{\lambda,s,Q}|=\infty)
 \le
 \P(|\cC_x^{\rho(s,t)\lambda,t,Q}|=\infty).
\]
In the transitive case,
\[
 \lambda_c^Q(t)
 \le
 \frac{1-e^{-s}}{1-e^{-t}}\lambda_c^Q(s).
\]
Thus $\lambda_c^Q(t)<\lambda_c^Q(s)$ whenever
$0<\lambda_c^Q(s)<\infty$.
\end{theoremC}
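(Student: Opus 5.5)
The plan is to reduce everything to the one-hit inequality and then invoke Theorem~B. By the Poisson formula stated before the theorem, the first claim is equivalent to
\[
 \lambda h_s^Q(a,A)\le \rho(s,t)\lambda\, h_t^Q(a,A),
 \qquad\text{that is,}\qquad
 \frac{h_s^Q(a,A)}{1-e^{-s}}\le\frac{h_t^Q(a,A)}{1-e^{-t}} .
\]
So I will show that $u\mapsto h_u^Q(a,A)/(1-e^{-u})$ is nondecreasing on $(0,\infty)$. Since $a\notin A$, the chain must jump at least once to hit $A$. Its first jump time $\tau$ is $\mathrm{Exp}(1)$, and after that jump the chain is at a random vertex $Y\sim Q(a,\cdot)$. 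Conditioning on $\tau$ gives
\[
 h_u(a,A)=\int_0^u e^{-r}g(u-r)\di r,
 \qquad
 g(v):=\textstyle\sum_y Q(a,y)\,h_v^+(y,A),
\]
where $h_v^+(y,A)$ is the probability of being in $A$ at time $0$ or of hitting $A$ within time $v$ when started at $y$. In particular $g$ is nondecreasing and bounded by $1$. I would then write $h_u=(1-e^{-u})\,\E[g(u-\tau)\mid\tau\le u]$.

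The key step is to check that the conditional law of $u-\tau$ given $\tau\le u$ is stochastically increasing in $u$. Its density on $[0,u]$ is proportional to $e^{-(u-v)}=e^{-u}e^{v}$, so it is the law with density proportional to $e^{v}$ restricted to $[0,u]$. Truncating a fixed density to a larger interval $[0,u]$ increases it stochastically: the conditional distribution function $\int_0^w e^v\di v\big/\int_0^u e^v\di v$ decreases in $u$. Because $g$ is nondecreasing, $\E[g(u-\tau)\mid\tau\le u]$ is nondecreasing in $u$, and this gives the monotonicity of the ratio. When $\lambda=0$ the inequality is trivial. One detail is that $h_u$ should be read as hitting by time $u$ inclusive, but this does not affect the argument.

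The next two displayed consequences follow directly from Theorem~B applied to $R=R^{\lambda,s,Q}$ and $\widetilde R=R^{\rho(s,t)\lambda,t,Q}$. Both fields have independent coordinates, contain the source, and are almost surely finite, since a rate-one chain makes only finitely many jumps by time $t$ and the Poisson count is finite.

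For the transitive statement, I would take any $\lambda>\lambda_c^Q(s)$, so that $\theta_Q(\lambda,s)>0$. The survival comparison then gives $\theta_Q(\rho\lambda,t)>0$, hence $\lambda_c^Q(t)\le\rho\lambda$. Letting $\lambda\downarrow\lambda_c^Q(s)$ yields the bound, which is vacuous if $\lambda_c^Q(s)=\infty$. Strictness when $0<\lambda_c^Q(s)<\infty$ follows because $\rho(s,t)<1$.

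I expect the main obstacle to be stating the first-jump decomposition correctly, namely that $g$ depends only on the remaining time and is monotone in it; the rest is routine.
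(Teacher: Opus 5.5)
Your proof is correct, but it obtains the key estimate $h_s^Q(a,A)\le\rho(s,t)h_t^Q(a,A)$ differently from the paper.

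The paper decomposes according to the number $n=H_A$ of embedded-chain steps needed to reach $A$. Independence of the clock and the chain gives $h_u^Q(a,A)=\sum_{n\ge1}\P_a^Q(H_A=n)\P(N_u\ge n)$. The paper then proves the purely Poissonian tail bound $\P(N_s\ge n)\le\rho(s,t)\P(N_t\ge n)$ uniformly in $n\ge1$, by showing that $F_n(u)/F_1(u)$ is monotone via the Gamma densities (Lemmas~\ref{lem:jump-budget-comparison} and~\ref{lem:poisson-clock-contraction}).

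You instead condition on the first clock ring and reduce to the stochastic monotonicity in $u$ of the density $\propto e^{v}$ on $[0,u]$. This shows directly that $u\mapsto h_u^Q(a,A)/(1-e^{-u})$ is nondecreasing and avoids the Gamma computation. The two arguments are closely related. Since $F_n(u)=\int_0^u e^{-r}F_{n-1}(u-r)\di r$ with $F_0\equiv1$, the paper's lemma is exactly your argument applied with $g=F_{n-1}$. Conversely, your $g$ is the mixture $\sum_{n\ge1}\P_a^Q(H_A=n)F_{n-1}$.

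What the paper's factorisation buys is a statement about the clock alone. It is reused verbatim for arbitrary jump budgets $J,J'$ in Corollary~\ref{cor:general-jump-budgets}, and it gives optimality of the constant uniformly in $n$ (equality at $n=1$). Your first-ring argument relies on the memorylessness of the Poisson clock, so it does not transfer to general budgets. It is, however, a clean self-contained proof of exactly what Theorem~C needs, including kernels with holding mass or infinite range.

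The remaining steps (application of Theorem~B and the critical-value passage) coincide with the paper's. One small point: going from $\theta_Q(\rho\lambda,t)>0$ to $\lambda_c^Q(t)\le\rho\lambda$ uses that $\{\mu:\theta_Q(\mu,t)=0\}$ is downward closed. That is, it uses monotonicity of $\theta_Q$ in the density, for instance by Poisson thinning, and you should say so explicitly.
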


Since
\[
 \rho(s,t)=\frac{1-e^{-s}}{1-e^{-t}}<1,
 \qquad 0<s<t,
\]
Theorems~B and~C say that a longer lifespan compensates for a fixed fraction
of the frogs, not merely for an arbitrarily small loss of density.  The
factor is uniform over the source, the target, and the number of jumps needed
to reach it.  On the finite, nontrivial part of the phase boundary, this is
what turns ordinary monotonicity into strict decrease, even for kernels with
unbounded jump length.  The same comparison shows that
$(1-e^{-t})\lambda_c^Q(t)$ is nonincreasing.  For simple random walk, a
first-jump comparison with Bernoulli bond percolation gives a matching
upper bound for the short-lifespan constant.  Controlling the contribution
of frogs making two or more jumps proves the exact limit in Theorem~A.

Theorem~C controls the boundary in only one direction.  Where the critical
density is positive and finite, it rules out flat pieces, but it does not rule
out a downward jump.  Continuity requires a reverse comparison.  In a finite
region, changing the density means adding frogs, whereas changing the
lifespan means extending their walks.  We use finite-volume sensitivity
formulas to measure both effects and combine them with a geometric exit
estimate from Angel et al. \cite{AngelDeLaRivaHermonShi2026}.  This bounds the
effect of extra time by the effect of extra density, uniformly in the region.
After passing to the infinite graph, the reverse estimate combines with
Theorem~C to give continuity and the local bi-Lipschitz bounds in Theorem~A.

The local-to-global theorem and the forward clock comparison do not use
nearest-neighbour geometry.  Only the reverse estimate does.  This suggests
asking which simple property of a general jump kernel can replace the
geometric estimate for simple random walk.  We show that it is enough to take
an irreducible kernel $Q$, equivariant under a transitive subgroup of
$\Aut(G)$, for which there is a constant $\kappa<\infty$ such that
\[
 Q^2(x,y)\le\kappa Q(x,y),
 \qquad x\ne y.
\]
In words, reaching a different vertex in two jumps cannot be much more likely
than reaching it in one direct jump.  On every infinite, connected, locally
finite, transitive graph of superlinear growth, this condition gives the same
regularity of $\lambda_c^Q$ and of the finite branch of $t_c^Q$ as in
Theorem~A.

We also adapt the sharpness argument to this long-range setting.  Immediately
above the critical density, the survival probability has a linear lower
bound; below it, the activated cluster has an exponential size tail.  A main
example is a symmetric, translation-invariant power-law kernel on a
polynomial-growth Cayley graph: if
\[
 |B_G(o,n)|\asymp n^d,
 \qquad
 Q(x,y)\asymp(1+d_G(x,y))^{-(d+\alpha)},
 \qquad x\ne y,\quad d>1,\quad\alpha>0,
\]
then both critical curves are continuous and strictly decreasing on all of
$(0,\infty)$.  These results complement recent work on activation times and
finite-volume cover lifespans for heavy-tailed frog models by Angel, Hermon, and Shi \cite{AngelHermonShi2026HeavyTailed}.

The paper is organised as follows.  Section~\ref{sec:rooted-ranges} proves the
local-to-global theorem.  Section~\ref{sec:forward-comparison} derives the
forward density--lifespan comparison, and
Section~\ref{sec:reverse-comparison} proves the reverse comparison.
Section~\ref{sec:critical-curves} combines the two directions to establish
the critical-curve results and the small-lifespan limit.  Finally,
Section~\ref{sec:bridge-kernels} treats the long-range extension, while the
appendix gives the corresponding adaptation of the sharpness argument from \cite{AngelDeLaRivaHermonShi2026}.

\section{Rooted activation ranges and local-to-global comparison}
\label{sec:rooted-ranges}

Let $V$ be countable.  A rooted-range field itself need not use an ambient graph.
Whenever we do invoke a reference graph, we mean a deterministic connected
locally finite graph $G=(V,E_G)$ that is used only for geometry and symmetry, in particular the directed activation edges need not belong to $E_G$.

\begin{definition}
\label{def:rooted-range}
For $a\in V$, set

\[
 \cR_a:=\{R\subset V:a\in R,\ |R|<\infty\}.
\]

A \emph{rooted-range law at $a$} is a probability measure on $\cR_a$.
\end{definition}

Given a field $(R_a)_{a\in V}$ of rooted activation ranges, draw the
directed macro-edge $a\to z$ whenever $z\in R_a$.  We always assume that
the ranges belonging to distinct source vertices are independent.

\begin{definition}
\label{def:one-hit}
For rooted-range laws $\mu_a$ and $\nu_a$ on $\cR_a$, write
$\mu_a\hitorder\nu_a$ if

\[
 \mu_a(R\cap A\ne\varnothing)
 \le \nu_a(R\cap A\ne\varnothing)
\]

for every finite $A\subset V\setminus\{a\}$.
\end{definition}

The one-hit order is weaker than stochastic domination by inclusion.  It
retains exactly the local information used by a directed path when it leaves
a source vertex.

\begin{proposition}
\label{prop:subgraph-reduction}
Suppose that activating a source $a$ triggers a finite directed subgraph
$H_a$ rooted at $a$ and thereby activates every vertex reachable from $a$
within $H_a$.  The edges of $H_a$ are used only through this
root-triggered operation.  Put
\[
 R_a:=\operatorname{Reach}_{H_a}(a)
 :=\{z\in V:a\text{ reaches }z\text{ in }H_a\}.
\]
Replacing every local subgraph $H_a$ by its rooted activation range $R_a$
preserves the eventual activated set.
\end{proposition}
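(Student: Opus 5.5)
We argue by a double inclusion between the two eventual activated sets, starting from the same initially activated vertex $o$. Set up the dynamics as a closure operation. In the subgraph model, the activated set $\mathcal A^H$ is the smallest set $S\ni o$ such that whenever $a\in S$ and $z$ is reachable from $a$ in $H_a$, then $z\in S$. The hypothesis that edges of $H_a$ are used only through the root-triggered operation is exactly what makes this description valid. In the range model, $\mathcal A^R$ is the smallest set $S\ni o$ with $R_a\subset S$ for all $a\in S$; equivalently, it is the out-cluster $\cC_o^R$ of the macro-edge digraph. Since $R_a=\operatorname{Reach}_{H_a}(a)$, the two closure conditions are literally the same condition on $S$: $S$ is closed for one rule if and only if it is closed for the other. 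Hence the smallest closed sets coincide, and $\mathcal A^H=\mathcal A^R$.

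To make this robust to how ``eventual activated set'' is defined (for instance, via a sequential or time-ordered revealing of sources), we would also give the constructive version. Order the activations in generations: $S_0=\{o\}$ and $S_{n+1}=S_n\cup\bigcup_{a\in S_n}R_a$ for ranges. For subgraphs we use the analogous step, where every $a\in S_n$ triggers $H_a$ and activates its reachable set. By induction on $n$, both models produce the same $S_n$, since each step adds precisely $\bigcup_{a\in S_n}\operatorname{Reach}_{H_a}(a)$. The eventual set is $\bigcup_n S_n$ in both cases.

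The only point requiring care is confirming the first inclusion $\mathcal A^H\subset\mathcal A^R$. We must check that a vertex $z$ activated in the subgraph model is reached only along a concatenation of segments, each lying within a single $H_a$ and starting at that $a$'s root after $a$ was activated. A path of the form ``enter $H_a$ at some non-root vertex and follow its edges'' is excluded by the root-triggered assumption. Given that, each segment is covered by a macro-edge $a\to z'$ with $z'\in R_a$, and induction along the segments finishes the argument. The reverse inclusion is immediate: each macro-edge $a\to z$ is realised by a path in $H_a$ from the activated root $a$. Finiteness of $H_a$ ensures $R_a\in\cR_a$, so the resulting field is a legitimate rooted-range field. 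Independence across sources is inherited, because $R_a$ is a measurable function of $H_a$ alone.
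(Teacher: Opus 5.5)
Your proposal is correct and follows essentially the paper's argument. The paper also observes that replacing $H_a$ by the macro-edges $a\to z$, $z\in R_a$, leaves the one-step activation closure of every set unchanged, and then iterates that closure. This is exactly your generation-by-generation induction; your minimal-closed-set formulation is an equivalent repackaging, and $a\in R_a$ holds by the empty-path convention.
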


\begin{proof}
Once $a$ is activated, the local subgraph $H_a$ eventually activates
precisely the vertices in $\operatorname{Reach}_{H_a}(a)$.  Replacing $H_a$
by macro-edges
from $a$ to these vertices therefore leaves the one-step activation closure
of every set unchanged.  Iterating the closure operation gives the same
eventual activated set.
\end{proof}

\begin{definition}
\label{def:volume-reachability}
Let $\Lambda\subset V$ be finite, let $x\in\Lambda$, and let
$B\subset V\setminus\{x\}$.  We say that $x$ \emph{reaches $B$ through
$\Lambda$}, and write $x\rightsquigarrow_\Lambda B$, if there are
vertices $x=x_0,x_1,\ldots,x_n$ such that

\[
 x_0,\ldots,x_{n-1}\in\Lambda,
 \qquad x_n\in B,
 \qquad x_{i+1}\in R_{x_i}\quad(0\le i<n).
\]

When saying that a vertex reaches itself, we use the
empty-path convention.  Only the sources of the non-final steps must
lie in $\Lambda$.  Thus the
proper finite-volume exit event for a long-range model is
$x\rightsquigarrow_\Lambda(V\setminus\Lambda)$, rather than connection to a
nearest-neighbour boundary.
\end{definition}

\begin{lemma}
\label{lem:arbitrary-targets}
If $\mu_a\hitorder\nu_a$, then the inequality in
Definition~\ref{def:one-hit} holds for every possibly infinite set
$A\subseteq V\setminus\{a\}$.
\end{lemma}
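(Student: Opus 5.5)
The plan is to approximate an arbitrary target set $A\subseteq V\setminus\{a\}$ from inside by finite sets and pass to the limit using continuity of measure. Since $V$ is countable, we fix an enumeration of $A$ and let $A_n$ be the set of its first $n$ elements, or all of $A$ if $A$ is finite. Each $A_n$ is a finite subset of $V\setminus\{a\}$, so Definition~\ref{def:one-hit} gives
\[
 \mu_a(R\cap A_n\ne\varnothing)\le\nu_a(R\cap A_n\ne\varnothing)
 \qquad\text{for every } n .
\]

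Next we identify the limiting event. The events $E_n:=\{R\in\cR_a:R\cap A_n\ne\varnothing\}$ increase in $n$, because $A_n\subseteq A_{n+1}$. Their union is exactly $\{R\cap A\ne\varnothing\}$: any point of $R\cap A$ lies in some $A_n$. Both sides are therefore monotone limits, and continuity from below of the probability measures $\mu_a$ and $\nu_a$ gives
\[
 \mu_a(R\cap A\ne\varnothing)=\lim_n\mu_a(E_n)\le\lim_n\nu_a(E_n)=\nu_a(R\cap A\ne\varnothing).
\]

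This step does not use the finiteness of $R$. Finiteness would let one argue instead that $R\cap A=R\cap A_n$ for all large $n$, but the monotone-union argument already suffices.

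The only point that needs care is measurability. These events must be measurable with respect to the $\sigma$-algebra on $\cR_a$. Since $\cR_a$ is countable, being a set of finite subsets of a countable set, we take the discrete $\sigma$-algebra, and then every event is measurable. Alternatively, $E_n$ is a finite union of the cylinder events $\{z\in R\}$, and $\{R\cap A\ne\varnothing\}$ is a countable union of them, so the argument also works with the cylinder $\sigma$-algebra. There is no real obstacle beyond this bookkeeping.
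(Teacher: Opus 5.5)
Your proof is correct and matches the paper's argument: exhaust $A$ by finite sets $A_n\uparrow A$, observe that the events $\{R\cap A_n\ne\varnothing\}$ increase to $\{R\cap A\ne\varnothing\}$, and pass to the limit by continuity from below. Your measurability remark (that $\cR_a$ is countable, so the discrete $\sigma$-algebra works) is a harmless addition that the paper leaves implicit.
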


\begin{proof}
Since $V$ is countable, choose finite sets $A_n\uparrow A$.  Under either
law, the events \(\{R\cap A_n\ne\varnothing\}\) increase to
$\{R\cap A\ne\varnothing\}$.  The conclusion follows by continuity from
below.
\end{proof}

\begin{lemma}
\label{lem:single-site-section}
Fix a finite $\Lambda\subset V$, a source $x\in\Lambda$, a target
$B\subset V\setminus\{x\}$, and a site $a\in\Lambda$.  Conditional on
$(R_z)_{z\ne a}$, the event $\{x\rightsquigarrow_\Lambda B\}$, viewed as
a function of $R_a$, is either constant or has the form

\[
 \{R_a\cap A_a\ne\varnothing\}
\]

for a set $A_a\subset V\setminus\{a\}$ determined by the conditioned
ranges.
\end{lemma}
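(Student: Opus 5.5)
Fix the conditioned ranges $(R_z)_{z\ne a}$ and split into two cases according to whether $x$ already reaches $B$ without using any edge out of $a$. Let $D$ be the set of vertices that $x$ reaches by directed paths whose non-final vertices lie in $\Lambda\setminus\{a\}$ and that use only the fixed ranges $R_z$, $z\ne a$. Under the empty-path convention $x\in D$ whenever $x\ne a$, and we put $D=\{x\}$ when $x=a$. Let $E$ be the set of vertices $y$ from which $B$ is reached, again using only sources in $\Lambda\setminus\{a\}$. Formally, $y\in E$ if $y\in B$, or if $y\in\Lambda\setminus\{a\}$ and there is a path $y=y_0,\dots,y_m$ with $y_0,\dots,y_{m-1}\in\Lambda\setminus\{a\}$, $y_m\in B$ and $y_{i+1}\in R_{y_i}$. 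Both $D$ and $E$ are determined by the conditioned ranges.

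\textbf{Case 1: the value of $R_a$ is irrelevant.} Suppose first that $x\ne a$. If $D\cap B\ne\varnothing$, then $x\rightsquigarrow_\Lambda B$ holds for every choice of $R_a$, so the event is the sure event. If $D\cap B=\varnothing$ and $a\notin D$, then no admissible path ever visits $a$ as a non-final vertex. Hence $R_a$ is never used and the event is the empty event. Both outcomes are constant.

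\textbf{Case 2: the event depends on $R_a$.} The remaining situation is $D\cap B=\varnothing$ with $a\in D$, or $x=a$. Set
\[
 A_a:=E\setminus\{a\}.
\]
I claim that $x\rightsquigarrow_\Lambda B$ holds if and only if $R_a\cap A_a\ne\varnothing$.

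For the "if" direction, first reach $a$ from $x$ through $D$. This step is empty if $x=a$, and otherwise uses only sources in $\Lambda\setminus\{a\}$. Then jump from $a$ to some $y\in R_a\cap E$, which is allowed because $a\in\Lambda$. Finally follow the path from $y$ to $B$ that witnesses $y\in E$.

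For the "only if" direction, take any admissible path from $x$ to $B$ and shorten it so that it visits $a$ at most once as a non-final vertex, by cutting out loops. The path must use $a$ as a source: otherwise its endpoint lies in $D\cap B=\varnothing$, or, when $x=a$, its first step already comes from $a$. Let $y$ be the successor of $a$ on the path. After $a$ the shortened path avoids $a$ as a source, so $y\in E$. Moreover $y\ne a$, since a trivial step $a\to a$ can be dropped; if $a\in B$, the path could simply stop at $a$. One detail needs checking here: if $a\in B$ then $a\ne x$, and reaching $a$ would already put $a\in D\cap B$. So in Case 2 we have $a\notin B$ and this situation does not arise.

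It follows that the event equals $\{R_a\cap A_a\ne\varnothing\}$ with $A_a\subset V\setminus\{a\}$, as the lemma requires.

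\textbf{Main obstacle.} The main difficulty is the loop-erasure bookkeeping in the "only if" direction. One must make sure that cutting loops keeps every non-final vertex in $\Lambda$; this holds because cutting only removes vertices. One must also make sure the set $A_a$ does not depend on $R_a$. This is guaranteed because $E$ is defined using only paths that avoid $a$ as a source, and is what forces the restriction to shortened paths.
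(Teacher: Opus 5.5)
Your proof is correct and follows essentially the same route as the paper. Your set $D$ plays the role of reachability in the paper's configuration $\mathcal D^{-a}$ (all outgoing edges of $a$ deleted), and your target $A_a=E\setminus\{a\}$ coincides with the paper's set. The only cosmetic differences are that you loop-erase so that $a$ is used once as a source, where the paper simply takes the last step with source $a$, and that you treat $x=a$ and $a\notin B$ explicitly.
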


\begin{proof}
We delete every outgoing activation edge from $a$, and call the resulting
directed configuration $\mathcal D^{-a}$.  If $x$ reaches $B$ through
$\Lambda$ in $\mathcal D^{-a}$, the section is constant one.  If it does
not and $x$ does not reach $a$ in $\mathcal D^{-a}$, the section is
constant zero.  Indeed, the first visit to $a$ by any witnessing path would
have to occur without using an outgoing edge from $a$.

It remains to consider the case in which $x$ reaches $a$ but not $B$ in
$\mathcal D^{-a}$.  Define

\[
 A_a:=(B\setminus\{a\})
 \cup
 \{z\in\Lambda\setminus(B\cup\{a\}):
      z\rightsquigarrow_\Lambda B\text{ in }\mathcal D^{-a}\}.
\]

If $R_a$ hits $A_a$, concatenate a walk from $x$ to $a$, the edge from
$a$ to the hit vertex, and a walk from that vertex to $B$.
Conversely, suppose a witnessing path exists after the outgoing edges from
$a$ are restored.  Delete any self-loop steps from the witness, since they do
not affect reachability, and consider its last step whose source is $a$.
Such a step exists in the present case, and its endpoint is different from
$a$.  That endpoint lies in $A_a$, because the remaining part of the walk
uses no outgoing edge from $a$.  Hence, with all other ranges fixed,
$x\rightsquigarrow_\Lambda B$ holds if and only if
$R_a\cap A_a\ne\varnothing$.
\end{proof}

\begin{theorem}
\label{thm:local-to-global}
For every $a\in V$, let $\mu_a$ and $\nu_a$ be rooted-range laws on
$\cR_a$ satisfying $\mu_a\hitorder\nu_a$.  Let $\P_\mu$ and
$\P_\nu$ denote the product laws induced by these families.  Then, for every
finite
$\Lambda\subset V$, every $x\in\Lambda$, and every
$B\subset V\setminus\{x\}$,

\[
 \P_\mu(x\rightsquigarrow_\Lambda B)
 \le
 \P_\nu(x\rightsquigarrow_\Lambda B).
\]
\end{theorem}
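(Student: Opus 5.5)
The plan is a hybrid (interpolation) argument that swaps the range laws one site at a time inside $\Lambda$. First, the event $\{x\rightsquigarrow_\Lambda B\}$ depends only on $(R_a)_{a\in\Lambda}$, because only sources of non-final steps must lie in $\Lambda$. Its probability under $\P_\mu$ or $\P_\nu$ therefore depends only on the finitely many laws $(\mu_a)_{a\in\Lambda}$ or $(\nu_a)_{a\in\Lambda}$. Enumerate $\Lambda=\{a_1,\dots,a_m\}$. For $0\le k\le m$, let $\P^{(k)}$ be the product law in which $R_{a_i}\sim\nu_{a_i}$ for $i\le k$ and $R_z\sim\mu_z$ for every other $z$, still with independent coordinates. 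Then $\P^{(0)}=\P_\mu$, and $\P^{(m)}$ agrees with $\P_\nu$ on the event. It is therefore enough to show
\[
 \P^{(k-1)}(x\rightsquigarrow_\Lambda B)\le\P^{(k)}(x\rightsquigarrow_\Lambda B),\qquad 1\le k\le m,
\]
and then telescope.

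For the single swap at $a=a_k$, note that $\P^{(k-1)}$ and $\P^{(k)}$ differ only in the law of $R_a$. Both are products, so the common law of $(R_z)_{z\ne a}$ is independent of $R_a$ under each. By Fubini, it suffices to compare the conditional probabilities given $(R_z)_{z\ne a}$, pointwise in that conditioning. Lemma~\ref{lem:single-site-section} now does the work. With the other ranges fixed, the indicator of $\{x\rightsquigarrow_\Lambda B\}$, as a function of $R_a$, is either constant or equal to $\1\{R_a\cap A_a\ne\varnothing\}$ for a set $A_a\subset V\setminus\{a\}$ determined by $(R_z)_{z\ne a}$.

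In the constant case the two conditional probabilities coincide. In the one-hit case they equal $\mu_a(R\cap A_a\ne\varnothing)$ and $\nu_a(R\cap A_a\ne\varnothing)$ respectively. The set $A_a$ need not be finite, since $B$ may be infinite, so I invoke Lemma~\ref{lem:arbitrary-targets} to extend $\mu_a\hitorder\nu_a$ to arbitrary $A_a\subseteq V\setminus\{a\}$. This gives the pointwise inequality. Integrating over $(R_z)_{z\ne a}$ gives the step inequality.

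The point needing most care is measurability. The case distinction and $A_a$ are functions of $(R_z)_{z\ne a}$, and I must make sure the map $(r,(R_z)_{z\ne a})\mapsto\1\{x\rightsquigarrow_\Lambda B\}$ is jointly measurable so that Fubini applies. This holds because the ranges are finite subsets of a countable set, so every configuration space is countable or standard. Reachability through $\Lambda$ is also a countable union over finite paths of cylinder events in finitely many coordinates. Everything else is immediate from the two lemmas.
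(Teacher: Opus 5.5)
Your proposal is correct and follows the paper's proof essentially verbatim. Both arguments interpolate site by site over $\Lambda$, condition on the other ranges, and use Lemma~\ref{lem:single-site-section} to reduce to a constant or one-hit section, with Lemma~\ref{lem:arbitrary-targets} handling infinite $A_a$, then integrate and telescope. Your measurability discussion is a harmless addition: the event depends only on the finitely many countably-valued coordinates $(R_z)_{z\in\Lambda}$, so measurability is automatic.
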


\begin{proof}
Only the laws at the finitely many source sites in $\Lambda$ affect the
event.  Enumerate those sites and interpolate between the two product fields
by replacing one local law at a time.  At a replacement site $a$, condition
on all other ranges and apply Lemma~\ref{lem:single-site-section}.  If the
connection event does not depend on $R_a$, replacing its law leaves the
conditional probability unchanged.  Otherwise, the event occurs exactly
when $R_a$ hits a target $A_a$ determined by the other ranges.  The comparison
then follows from Definition~\ref{def:one-hit} and, when $A_a$ is infinite,
Lemma~\ref{lem:arbitrary-targets}.  Integrate
over the other ranges and iterate through the sites of $\Lambda$.
\end{proof}

\begin{remark}
\label{rem:relation-baeumler}
When $V=\mathbb Z^d$ and every $R_a\setminus\{a\}$ is a subset of the
nearest neighbours of $a$, Theorem~\ref{thm:local-to-global} recovers the
directed-reachability comparison of
\cite[Theorem~2.1]{BaeumlerJahnelKoepplLodewijksReevesTobias2026}.  The
proof of Theorem~\ref{thm:local-to-global} uses neither transitivity,
connectedness of a range, nor local finiteness of the activation relation.
A reference graph is needed only for later geometric or symmetry questions,
such as balls, growth, boundaries, or automorphisms.
\end{remark}

\begin{corollary}
\label{cor:infinite-volume-comparison}
Let $\cC_x$ be the out-cluster of $x$.  Under the assumptions of
Theorem~\ref{thm:local-to-global}, for every $x\in V$,

\[
 \P_\mu(|\cC_x|=\infty)
 \le
 \P_\nu(|\cC_x|=\infty).
\]
\end{corollary}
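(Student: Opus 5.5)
The plan is to pass from the finite-volume comparison of Theorem~\ref{thm:local-to-global} to the infinite cluster by exhaustion. Fix $x\in V$ and choose finite sets $\Lambda_n\uparrow V$ with $x\in\Lambda_1$; this is possible because $V$ is countable. For each $n$ we apply Theorem~\ref{thm:local-to-global} with $\Lambda=\Lambda_n$ and $B=B_n:=V\setminus\Lambda_n$, which does not contain $x$. This gives
\[
 \P_\mu(x\rightsquigarrow_{\Lambda_n}(V\setminus\Lambda_n))
 \le
 \P_\nu(x\rightsquigarrow_{\Lambda_n}(V\setminus\Lambda_n)).
\]
Here $B_n$ is infinite, but the theorem allows arbitrary $B\subset V\setminus\{x\}$, since Lemma~\ref{lem:arbitrary-targets} handles infinite targets.

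It remains to show that, under either product law, the events $E_n:=\{x\rightsquigarrow_{\Lambda_n}(V\setminus\Lambda_n)\}$ decrease to $\{|\cC_x|=\infty\}$ up to a null set. Letting $n\to\infty$ in the displayed inequality then gives the corollary by continuity from above.

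For the inclusion $\{|\cC_x|=\infty\}\subset E_n$: if the cluster is infinite, it contains a vertex outside the finite set $\Lambda_n$. Take a directed path from $x$ to such a vertex and stop it at the first vertex not in $\Lambda_n$. All non-final vertices of the truncated path lie in $\Lambda_n$, so $E_n$ holds.

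For the reverse inclusion $\bigcap_n E_n\subset\{|\cC_x|=\infty\}$: suppose $\cC_x$ is finite. Since $\Lambda_n\uparrow V$ and $\cC_x$ is finite, $\cC_x\subset\Lambda_n$ for all large $n$. Every vertex reachable from $x$ then lies in $\Lambda_n$, so no path from $x$ ends in $V\setminus\Lambda_n$ and $E_n$ fails. This also shows $E_n\in$ the product $\sigma$-field, being determined by finitely many ranges, each of which is a finite set. The argument uses no a.s.\ qualification, since every range is finite surely.

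Monotonicity of $E_n$ is not actually needed. The indicators $\1_{E_n}$ converge pointwise to $\1_{\{|\cC_x|=\infty\}}$ by the two inclusions, so dominated convergence gives $\P(E_n)\to\P(|\cC_x|=\infty)$ under both $\P_\mu$ and $\P_\nu$. I expect no real obstacle. The only point that needs care is the pointwise convergence in the finite-cluster case, and that follows from finiteness of $\cC_x$ together with the exhaustion.
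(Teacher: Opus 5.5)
Your proof is correct and follows the paper's argument. You exhaust $V$ by finite $\Lambda_n\ni x$, apply Theorem~\ref{thm:local-to-global} to the exit events $E_n=\{x\rightsquigarrow_{\Lambda_n}(V\setminus\Lambda_n)\}$, and identify their limit with $\{|\cC_x|=\infty\}$. The only cosmetic difference is the limit step: you use pointwise convergence of $\1_{E_n}$ with dominated convergence, whereas the paper observes that the $E_n$ decrease and uses continuity from above.
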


\begin{proof}
Since $V$ is countable, choose an increasing sequence of finite sets
$\Lambda_n\uparrow V$ with $x\in\Lambda_1$, and let

\[
 E_n:=\{x\rightsquigarrow_{\Lambda_n}(V\setminus\Lambda_n)\}.
\]

The events $E_n$ decrease in $n$: if $m\ge n$ and a walk witnesses $E_m$,
then its initial segment up to its first exit from $\Lambda_n$ witnesses
$E_n$.  Moreover,

\[
 \bigcap_{n\ge1}E_n=\{|\cC_x|=\infty\},
\]

because a finite out-cluster is eventually contained in $\Lambda_n$, whereas
an infinite out-cluster exits every $\Lambda_n$.
Apply Theorem~\ref{thm:local-to-global} to each $E_n$ and then use
continuity from above.
\end{proof}

\section{Finite-lifespan frog models and the forward comparison}
\label{sec:forward-comparison}

We begin by encoding the frog model as a rooted-range field.

Let $Q$ be a Markov kernel on the countable set $V$, and write $\P_x^Q$ for
the law of a rate-one continuous-time $Q$-chain started at $x$.

Concretely, set $X_u=Y_{N_u}$, where $(Y_n)$ has transition kernel $Q$ and
$(N_u)$ is an independent rate-one Poisson process.  Throughout, a clock
update (also called a jump) includes an update with $Y_{n+1}=Y_n$.
Thus $N_u\sim\Poi(u)$ also when $Q$ has a holding mass.
  For every
$x\in V$, let $\eta_x\sim\Poi(\lambda)$, independently over $x$.  Conditional
on $\eta_x$, sample independent rate-one continuous-time $Q$-walks
$(X_s^{x,i})_{s\ge0}$, $1\le i\le\eta_x$, started at $x$.  For a fixed
finite lifespan $t$, define

\begin{equation}
 R_x^{\lambda,t,Q}
 :=\{x\}\cup
 \bigcup_{i=1}^{\eta_x}\{X_s^{x,i}:0\le s\le t\}.
 \label{eq:frog-range}
\end{equation}

Since a rate-one clock rings only finitely many times on a finite interval,
this is a finite rooted activation range almost surely, even when $Q$ has
infinite range.  The resulting field of rooted-range laws is denoted by
$\mu_{\lambda,t,Q}$, and its induced product measure by
$\P_{\lambda,t,Q}$.  For two such fields, we use $\hitorder$ sitewise, meaning
that the order in Definition~\ref{def:one-hit} holds at every source.

For $A\subseteq V$, denote
\[
 \tau_A:=\inf\{s\ge0:X_s\in A\},
\]
with $\tau_A=\infty$ when the set is never hit.

\begin{proposition}
\label{prop:frog-one-hit}
For every $x\in V$ and every finite $A\subset V\setminus\{x\}$,

\begin{equation}
 \P(R_x^{\lambda,t,Q}\cap A\ne\varnothing)
 =1-\exp\{-\lambda h_t^Q(x,A)\},
 \qquad
 h_t^Q(x,A):=\P_x^Q(\tau_A\le t).
 \label{eq:poisson-hit-functional}
\end{equation}

In particular,

\[
 \mu_{\lambda,t,Q}\hitorder\mu_{\lambda',t',Q'}
\]

if and only if

\[
 \lambda h_t^Q(x,A)
 \le \lambda' h_{t'}^{Q'}(x,A)
\]

for every source $x$ and every finite target
$A\subset V\setminus\{x\}$.
\end{proposition}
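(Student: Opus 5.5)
The identity \eqref{eq:poisson-hit-functional} is a Poisson thinning computation, and the equivalence then follows because $u\mapsto 1-e^{-u}$ is strictly increasing. First, since $x\notin A$, the root contributes nothing to the intersection. Hence $R_x^{\lambda,t,Q}\cap A\ne\varnothing$ holds exactly when at least one of the $\eta_x$ walks started at $x$ visits $A$ during $[0,t]$.

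Next we compute this probability. Conditional on $\eta_x=n$, the walks are independent, and each one hits $A$ by time $t$ with probability $h_t^Q(x,A)=\P_x^Q(\tau_A\le t)$. So the conditional probability that none of them hits $A$ is $(1-h)^n$, where $h=h_t^Q(x,A)$. Averaging over $\eta_x\sim\Poi(\lambda)$ gives
\[
 \E\bigl[(1-h)^{\eta_x}\bigr]=e^{-\lambda}\sum_{n\ge0}\frac{(\lambda(1-h))^n}{n!}=e^{-\lambda h}.
\]
Equivalently, the number of walks that hit $A$ is $\Poi(\lambda h)$ by Poisson thinning. Taking complements gives \eqref{eq:poisson-hit-functional}.

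Two measurability points need a brief check. The event $\{\tau_A\le t\}$ equals $\{\exists s\in[0,t]:X_s\in A\}$, and it is measurable: the path $X_s=Y_{N_s}$ is right-continuous and piecewise constant with finitely many jumps on $[0,t]$. So the event is determined by the finitely many states $Y_0,\dots,Y_{N_t}$. Also, "visited by time $t$" in \eqref{eq:frog-range} uses the closed interval $[0,t]$, which matches $\tau_A\le t$. The range is finite, as already noted after \eqref{eq:frog-range}.

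For the equivalence, apply Definition~\ref{def:one-hit} sitewise. The relation $\mu_{\lambda,t,Q}\hitorder\mu_{\lambda',t',Q'}$ means that
\[
 1-e^{-\lambda h_t^Q(x,A)}\le 1-e^{-\lambda' h_{t'}^{Q'}(x,A)}
\]
for every source $x$ and every finite $A\subset V\setminus\{x\}$, where both sides come from the formula just proved. The map $u\mapsto 1-e^{-u}$ is strictly increasing on $[0,\infty)$, so this inequality holds if and only if $\lambda h_t^Q(x,A)\le\lambda' h_{t'}^{Q'}(x,A)$. This proves the claimed equivalence.

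I do not expect a real obstacle. The only care needed is in excluding the root through $x\notin A$ and in the closed-interval convention for hitting, both handled above.
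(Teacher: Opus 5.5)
Your proof is correct and follows the same route as the paper: Poisson thinning gives the formula (you also check it directly via $\E[(1-h)^{\eta_x}]=e^{-\lambda h}$), and the strict monotonicity of $u\mapsto 1-e^{-u}$ gives the equivalence. Your remarks on excluding the root via $x\notin A$ and on the closed-interval hitting convention are correct details that the paper leaves implicit.
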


\begin{proof}
Each frog at $x$ hits $A$ by time $t$ with probability
$h_t^Q(x,A)$.  Poisson thinning shows that the number of such frogs is
$\Poi(\lambda h_t^Q(x,A))$.  Its probability of being nonzero is the
right-hand side of \eqref{eq:poisson-hit-functional}.  The equivalence in
Proposition~\ref{prop:frog-one-hit} follows because
$u\mapsto1-e^{-u}$ is strictly increasing.
\end{proof}

When a vertex is activated in the frog model, it exposes precisely the range
in \eqref{eq:frog-range}.  Iterating this rule shows that the set of eventually
activated vertices is the out-cluster in the rooted-range model.  For simple
random walk this is the percolation representation used in
\cite{AngelDeLaRivaHermonShi2026}.

\subsection*{Jump-budget comparison}

Let $(Y_n)_{n\ge0}$ be the embedded discrete-time chain with kernel $Q$,
started at $x$.  A nonnegative integer-valued random variable $L$,
independent of the chain, will be called a \emph{jump budget}; the associated
single-particle range is \(\{Y_0,\ldots,Y_L\}\).
For a target $A\subset V\setminus\{x\}$, write

\[
 H_A:=\inf\{n\ge1:Y_n\in A\},
\]

with $H_A=\infty$ if the target is never hit.

\begin{lemma}
\label{lem:jump-budget-comparison}
Let $L$ and $L'$ be jump budgets.  If, for some $\rho\ge0$,

\[
 \P(L\ge n)\le\rho\P(L'\ge n),\qquad n\ge1,
\]

then, for every $A\subset V\setminus\{x\}$,

\[
 \P_x^Q(H_A\le L)
 \le \rho\P_x^Q(H_A\le L').
\]
\end{lemma}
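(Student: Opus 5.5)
The plan is to condition on the embedded chain and use the independence of the budget from it. Since $L$ is independent of $(Y_n)$ and $H_A$ is a function of the chain alone, we have
\[
 \P_x^Q(H_A\le L)
 =\sum_{n\ge1}\P_x^Q(H_A=n)\,\P(L\ge n),
\]
where the sum runs over $n\ge1$ because $H_A\ge1$ by definition. The event $\{H_A=\infty\}$ contributes nothing, since $L$ is finite.

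The order of steps is as follows.

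First, justify the above decomposition. Write $\{H_A\le L\}$ as the disjoint union over $n\ge1$ of the events $\{H_A=n,\ L\ge n\}$. Then factor each term using the independence of $L$ from $(Y_n)$.

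Second, apply the hypothesis $\P(L\ge n)\le\rho\,\P(L'\ge n)$ termwise. The weights $\P_x^Q(H_A=n)$ are nonnegative, so each term is bounded by $\rho\,\P_x^Q(H_A=n)\,\P(L'\ge n)$.

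Third, resum using the same decomposition for $L'$. This identifies the resulting series with $\rho\,\P_x^Q(H_A\le L')$. All series have nonnegative terms, so rearrangement and termwise comparison are justified by Tonelli, and no finiteness of $A$ is needed.

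One point deserves a remark: the hypothesis is only imposed for $n\ge1$. This suffices because $x\notin A$ and $H_A$ is defined as the first hitting time after time zero, so $H_A\ge1$ and the value $n=0$ never enters the sum. This is also why the comparison can carry a factor $\rho<1$ even though $\P(L\ge0)=\P(L'\ge0)=1$.

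There is no real obstacle here. The only care needed is the bookkeeping that $H_A$ is measurable with respect to the chain alone and is independent of the budget. That fact is what allows the budget tail to factor out.
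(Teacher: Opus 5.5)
Your proposal is correct and follows essentially the same route as the paper: you use independence of the budget from the embedded chain to write $\P_x^Q(H_A\le L)=\sum_{n\ge1}\P_x^Q(H_A=n)\P(L\ge n)$, then apply the tail hypothesis term by term and resum. Your remarks that $H_A\ge1$ (so only $n\ge1$ enters) and that no finiteness of $A$ is needed match the paper's brief note that the series identity remains valid for infinite $A$.
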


\begin{proof}
Independence gives

\[
 \P_x^Q(H_A\le L)
 =\sum_{n\ge1}\P_x^Q(H_A=n)\P(L\ge n).
\]

Apply the assumed tail inequality term by term.  The series identity remains
valid for infinite $A$.
\end{proof}

\begin{corollary}
\label{cor:general-jump-budgets}
For a jump budget $J$ and $\lambda\ge0$, let $\mu_{\lambda,J,Q}$ be the
rooted-range field generated at every source by a $\Poi(\lambda)$ number of
independent $Q$-chains, each run for an independent copy of $J$ steps, and
write $\P_{\lambda,J,Q}$ for its product law.  If $\lambda,\lambda'\ge0$ and
\[
 \lambda\P(J\ge n)\le\lambda'\P(J'\ge n),
 \qquad n\ge1,
\]
then
\[
 \mu_{\lambda,J,Q}\hitorder\mu_{\lambda',J',Q}.
\]
Consequently, all finite-volume reachability probabilities and
infinite-out-cluster probabilities are ordered in the same direction.  More
generally, the same conclusion holds for source-dependent data whenever
\[
 \lambda_x\P(J_x\ge n)
 \le\lambda'_x\P(J'_x\ge n)
 \qquad\text{for every source $x$ and every $n\ge1$}.
\]

Suppose now that the reference graph is infinite, locally finite, and
transitive, and that $Q$ is equivariant under a transitive subgroup of its
automorphism group.  Define
\[
 \lambda_c^{J,Q}
 :=\sup\{\lambda\ge0:
          \P_{\lambda,J,Q}(|\cC_o|=\infty)=0\}.
\]
If $\rho>0$ and
\[
 \P(J\ge n)\le\rho\P(J'\ge n),
 \qquad n\ge1,
\]
then
\[
 \lambda_c^{J',Q}\le\rho\lambda_c^{J,Q}.
\]
If additionally $0<\rho<1$ and
$0<\lambda_c^{J,Q}<\infty$, then
$\lambda_c^{J',Q}<\lambda_c^{J,Q}$.
\end{corollary}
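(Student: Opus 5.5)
The plan is to reduce everything to the Poisson one-hit formula, the jump-budget lemma, and the local-to-global results of Section~\ref{sec:rooted-ranges}.

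\textbf{Step 1: the one-hit inequality.} First I would record the analogue of Proposition~\ref{prop:frog-one-hit} for budgeted chains. Each of the $\Poi(\lambda)$ chains at $x$ independently hits a finite $A\subset V\setminus\{x\}$ within its budget with probability $\P_x^Q(H_A\le J)$. Poisson thinning then gives
\[
 \mu_{\lambda,J,Q}(R\cap A\ne\varnothing)=1-\exp\{-\lambda\P_x^Q(H_A\le J)\}.
\]
Here I use $H_A$ with $n\ge1$, which is legitimate because $x\notin A$, so the hit occurs at a step $n\ge1$.

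\textbf{Step 2: comparing the exponents.} Next I need
\[
 \lambda\P_x^Q(H_A\le J)\le\lambda'\P_x^Q(H_A\le J').
\]
This follows from the series identity in the proof of Lemma~\ref{lem:jump-budget-comparison}:
\[
 \lambda\sum_{n\ge1}\P_x^Q(H_A=n)\P(J\ge n)\le\lambda'\sum_{n\ge1}\P_x^Q(H_A=n)\P(J'\ge n),
\]
which holds term by term under the hypothesis. The hypothesis is needed only for $n\ge1$, which matches the range of summation. Monotonicity of $u\mapsto1-e^{-u}$ then gives $\mu_{\lambda,J,Q}\hitorder\mu_{\lambda',J',Q}$ at every source. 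The source-dependent version is identical, because Definition~\ref{def:one-hit} and Theorem~\ref{thm:local-to-global} are sitewise and allow arbitrary laws at each source. Theorem~\ref{thm:local-to-global} and Corollary~\ref{cor:infinite-volume-comparison} then yield the ordering of finite-volume reachability and of infinite-cluster probabilities.

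\textbf{Step 3: the critical densities.} Assume $\P(J\ge n)\le\rho\P(J'\ge n)$ for $n\ge1$. Then for every $\lambda\ge0$ the pair $(\lambda,\rho\lambda)$ satisfies the hypothesis of the first part, so
\[
 \P_{\lambda,J,Q}(|\cC_o|=\infty)\le\P_{\rho\lambda,J',Q}(|\cC_o|=\infty).
\]
Take any $\lambda'>\rho\lambda_c^{J,Q}$ with $\lambda_c^{J,Q}<\infty$. Since survival is monotone in density, I can choose $\lambda\in(\lambda_c^{J,Q},\lambda'/\rho)$; this is where $\rho>0$ is used. For this $\lambda$ we have $\P_{\lambda,J,Q}(|\cC_o|=\infty)>0$ by the definition of the supremum together with monotonicity. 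Hence $\P_{\rho\lambda,J',Q}(|\cC_o|=\infty)>0$, and by monotonicity $\P_{\lambda',J',Q}(|\cC_o|=\infty)>0$. This gives $\lambda_c^{J',Q}\le\lambda'$, and letting $\lambda'\downarrow\rho\lambda_c^{J,Q}$ proves the claim. If $\lambda_c^{J,Q}=\infty$, the bound is trivial. Transitivity only serves to make $\lambda_c$ independent of the choice of $o$; the comparison itself does not use it.

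Density monotonicity of survival, which I use above, follows from Poisson superposition: the range at density $\lambda'\ge\lambda$ contains that at density $\lambda$. Alternatively, it follows from the first part with $J=J'$.

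\textbf{Step 4: strict decrease.} For the final claim, if $0<\rho<1$ and $0<\lambda_c^{J,Q}<\infty$, then
\[
 \lambda_c^{J',Q}\le\rho\lambda_c^{J,Q}<\lambda_c^{J,Q}.
\]

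There is no real obstacle here. The one point needing care is that the tail hypothesis is only assumed for $n\ge1$, so the target must exclude the source; this is exactly built into Definition~\ref{def:one-hit}.
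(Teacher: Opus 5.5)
Your proposal is correct and follows essentially the same route as the paper: Poisson thinning together with the series identity $\P_x^Q(H_A\le J)=\sum_{n\ge1}\P_x^Q(H_A=n)\P(J\ge n)$ gives the sitewise one-hit order. Theorem~\ref{thm:local-to-global} and Corollary~\ref{cor:infinite-volume-comparison} then order reachability and survival, and the critical bound follows by comparing densities $u$ and $\rho u$ above $\lambda_c^{J,Q}$ and taking a limit, with density monotonicity and $\rho>0$ used exactly where the paper uses them.
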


\begin{proof}
For a target $A\subset V\setminus\{x\}$,
\[
 p_J^Q(x,A):=\P_x^Q(H_A\le J)
 =\sum_{n\ge1}\P_x^Q(H_A=n)\P(J\ge n).
\]
Thus the assumed inequalities imply
$\lambda p_J^Q(x,A)\le\lambda'p_{J'}^Q(x,A)$.  Poisson thinning followed by
Theorem~\ref{thm:local-to-global} and
Corollary~\ref{cor:infinite-volume-comparison} proves the reachability
claims, source by source in the inhomogeneous case.  For the critical
comparison, if $\lambda_c^{J,Q}=\infty$, the claimed inequality is
tautological in the extended nonnegative reals because $\rho>0$.  Otherwise
particle-density monotonicity makes the zero-survival set downward closed, so
every $u>\lambda_c^{J,Q}$ is supercritical.  Apply the first part at
densities $u$ and $\rho u$, then let
$u\downarrow\lambda_c^{J,Q}$.  This also covers
$\lambda_c^{J,Q}=0$.  If in addition $0<\rho<1$ and
$0<\lambda_c^{J,Q}<\infty$, then
\[
 \lambda_c^{J',Q}
 \le\rho\lambda_c^{J,Q}<\lambda_c^{J,Q},
\]
which gives the final strict inequality.  The critical-value statement
deliberately excludes $\rho=0$: its tail hypothesis then forces $J=0$
almost surely and hence $\lambda_c^{J,Q}=\infty$, so there is no meaningful
finite rescaling bound to assert.
\end{proof}

\begin{lemma}
\label{lem:poisson-clock-contraction}
Let $N_u\sim\Poi(u)$.  For $0<s<t<\infty$,

\begin{equation}
 \P(N_s\ge n)
 \le
 \frac{1-e^{-s}}{1-e^{-t}}\P(N_t\ge n),
 \qquad n\ge1.
 \label{eq:poisson-clock-contraction}
\end{equation}

The constant is strictly smaller than one and is optimal uniformly in
$n\ge1$.
\end{lemma}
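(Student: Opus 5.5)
The plan is to show that $n\mapsto \P(N_s\ge n)/\P(N_t\ge n)$ is nonincreasing in $n\ge1$. The value at $n=1$ is exactly $(1-e^{-s})/(1-e^{-t})$, so monotonicity yields \eqref{eq:poisson-clock-contraction} for every $n\ge1$. It also gives optimality, because equality holds at $n=1$. That the constant is strictly below one is immediate, since $u\mapsto1-e^{-u}$ is strictly increasing.

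For the monotonicity, I would use the integral representation of the Poisson tail,
\[
 \P(N_u\ge n)=\int_0^u \frac{r^{n-1}e^{-r}}{(n-1)!}\di r ,\qquad n\ge1,
\]
which says that the $n$-th arrival time of a rate-one Poisson process has the Gamma$(n,1)$ density. Substituting $r=u v$ gives
\[
 \P(N_u\ge n)=\frac{u^n}{(n-1)!}\int_0^1 v^{n-1}e^{-uv}\di v .
\]
The ratio to control is therefore
\[
 \frac{\P(N_s\ge n)}{\P(N_t\ge n)}=\int_0^s r^{n-1}e^{-r}\di r\Big/\int_0^t r^{n-1}e^{-r}\di r .
\]
This is the probability that a random variable with density proportional to $r^{n-1}e^{-r}$ on $[0,t]$ falls in $[0,s]$. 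As $n$ increases, the density $r^{n-1}e^{-r}\1_{[0,t]}$ is multiplied by the increasing factor $r$. The family thus has a monotone likelihood ratio in $n$, and the corresponding laws increase stochastically. The mass they place on the initial segment $[0,s]$ is therefore nonincreasing in $n$.

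If I wanted to avoid the likelihood-ratio language, I would verify the step from $n$ to $n+1$ directly. Write $f(r)=r^{n-1}e^{-r}$ and $F(u)=\int_0^u f$. The required inequality is
\[
 \int_0^s rf\cdot F(t)\le F(s)\int_0^t rf .
\]
Splitting $\int_0^t=\int_0^s+\int_s^t$, this reduces to
\[
 \int_0^s rf\,\int_s^t f\le \int_0^s f\,\int_s^t rf ,
\]
which holds because $r\le s$ on the left factor and $r\ge s$ on the right: both sides are compared with $s\int_0^s f\int_s^t f$.

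I expect no real obstacle here. The only point needing care is the Gamma-tail identity, which I would justify by differentiating both sides in $u$; both vanish at $u=0$. I would also note that the case $n=1$ is trivial, and that the stated optimality refers to equality at $n=1$ rather than to any asymptotic regime.
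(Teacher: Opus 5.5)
Your proof is correct, and it reaches the bound by a slightly different route from the paper's. Both proofs rest on the same two ingredients: the Gamma representation and a monotone-ratio argument.

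The paper fixes $n$ and shows that $u\mapsto \P(N_u\ge n)/\P(N_u\ge 1)$ is nondecreasing. It does this by checking that the derivative has the sign of $u^{n-1}\int_0^u e^{-r}\di r-\int_0^u r^{n-1}e^{-r}\di r\ge0$. Evaluating at $u=s$ and $u=t$ then compares $n$ with $n=1$ in a single step.

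You instead fix $s<t$ and show that $n\mapsto \P(N_s\ge n)/\P(N_t\ge n)$ is nonincreasing. You use either the likelihood-ratio argument or the elementary split at $s$, and then iterate down to $n=1$.

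These are the same total-positivity property of the kernel $r^{n-1}\1_{[0,u]}(r)$, read in the two different variables. The paper's version is one derivative computation with no induction. Yours avoids differentiation entirely. As you set it up, it also gives the slightly stronger conclusion that the whole sequence of ratios decreases in $n$, rather than only that each ratio is dominated by the $n=1$ value. The substitution $r=uv$ you mention is not needed for either argument.
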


\begin{proof}
Let $S_n$ be the time of the $n$-th event of a rate-one Poisson process and
put

\[
 F_n(u):=\P(S_n\le u)=\P(N_u\ge n).
\]

Since $S_n$ has the $\operatorname{Gamma}(n,1)$ density,

\[
 \frac{F_n(u)}{F_1(u)}
 =\frac{1}{(n-1)!}
   \frac{\int_0^u r^{n-1}e^{-r}\di r}
        {\int_0^u e^{-r}\di r}.
\]

The ratio $F_n(u)/F_1(u)$ is nondecreasing in $u$.  Its derivative has the
sign of

\[
 u^{n-1}\int_0^u e^{-r}\di r
 -\int_0^u r^{n-1}e^{-r}\di r,
\]

which is nonnegative because $r^{n-1}\le u^{n-1}$ on $[0,u]$.  Therefore

\[
 \frac{F_n(s)}{F_n(t)}
 \le\frac{F_1(s)}{F_1(t)}
 =\frac{1-e^{-s}}{1-e^{-t}}.
\]

Equality holds for $n=1$, which proves optimality.
\end{proof}

The number of clock updates of a rate-one continuous-time
$Q$-walk by time $u$ is $N_u$, independently of the embedded chain.  Combining
Lemmas~\ref{lem:jump-budget-comparison} and
\ref{lem:poisson-clock-contraction} gives, for every target
$A\subset V\setminus\{x\}$,

\begin{equation}
 h_s^Q(x,A)
 \le \rho(s,t)h_t^Q(x,A),
 \qquad
 \rho(s,t)=\frac{1-e^{-s}}{1-e^{-t}}.
 \label{eq:clock-hit-comparison}
\end{equation}

\subsection*{Strict comparison}

\begin{proposition}
\label{prop:density-lifespan-tradeoff}
For every Markov kernel $Q$, every $\lambda\ge0$, and $0<s<t<\infty$,

\[
 \mu_{\lambda,s,Q}\hitorder
 \mu_{\rho(s,t)\lambda,t,Q}.
\]

Consequently, finite-volume reachability probabilities under
$(\lambda,s,Q)$ are no larger than under $(\rho(s,t)\lambda,t,Q)$.  The
infinite-out-cluster probability under $(\lambda,s,Q)$ is also no larger than
under $(\rho(s,t)\lambda,t,Q)$.
\end{proposition}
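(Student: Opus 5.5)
The plan is to verify the sitewise one-hit order through the Poisson criterion of Proposition~\ref{prop:frog-one-hit}, and then feed it into the general comparison machinery of Section~\ref{sec:rooted-ranges}. By that proposition, $\mu_{\lambda,s,Q}\hitorder\mu_{\rho(s,t)\lambda,t,Q}$ holds if and only if
\[
 \lambda h_s^Q(x,A)\le \rho(s,t)\lambda\, h_t^Q(x,A)
\]
for every source $x$ and every finite target $A\subset V\setminus\{x\}$. For $\lambda=0$ both sides vanish. For $\lambda>0$ the inequality is exactly \eqref{eq:clock-hit-comparison} multiplied by $\lambda$.

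So the only substantive step is to justify \eqref{eq:clock-hit-comparison} cleanly. Represent the rate-one continuous-time $Q$-chain as $X_u=Y_{N_u}$, with $(N_u)$ an independent rate-one Poisson process. Since $x\notin A$, the walk hits $A$ by time $u$ exactly when the embedded chain satisfies $H_A\le N_u$. Clock updates with $Y_{n+1}=Y_n$ cause no trouble, because they are counted both in $H_A$ and in $N_u$. Hence
\[
 h_u^Q(x,A)=\P_x^Q(H_A\le N_u),
\]
with $N_u\sim\Poi(u)$ playing the role of a jump budget independent of the chain.

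Lemma~\ref{lem:poisson-clock-contraction} supplies the tail inequality $\P(N_s\ge n)\le\rho(s,t)\P(N_t\ge n)$ for all $n\ge1$. Lemma~\ref{lem:jump-budget-comparison}, applied with $L=N_s$, $L'=N_t$ and $\rho=\rho(s,t)$, then yields \eqref{eq:clock-hit-comparison}.

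Equivalently, the whole statement is the special case of Corollary~\ref{cor:general-jump-budgets} with $J=N_s$, $J'=N_t$, $\lambda'=\rho(s,t)\lambda$. The hypothesis $\lambda\P(N_s\ge n)\le\rho(s,t)\lambda\P(N_t\ge n)$ is again Lemma~\ref{lem:poisson-clock-contraction}. To make this identification, one checks that the frog range \eqref{eq:frog-range} coincides with the single-particle ranges $\{Y_0,\dots,Y_{N_u}\}$ pooled over a Poisson number of frogs, and that the frogs' clocks are independent of their embedded chains.

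The consequences then follow directly. Theorem~\ref{thm:local-to-global} gives the finite-volume reachability comparison for every finite $\Lambda$, every $x\in\Lambda$ and every $B\subset V\setminus\{x\}$. Corollary~\ref{cor:infinite-volume-comparison} gives the comparison of infinite-out-cluster probabilities. Both results need only the sitewise one-hit order and independence of the ranges across sources, and both hold by construction.

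I do not expect a real obstacle. The only point requiring care is the identification of continuous-time hitting with the jump-budget formulation when $Q$ has holding mass or unbounded jumps. Since the ranges are almost surely finite and $H_A$ is defined over all clock updates, this is routine.
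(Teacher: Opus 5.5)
Your proposal is correct and follows the paper's proof. You reduce the one-hit order, via Proposition~\ref{prop:frog-one-hit}, to the hitting comparison \eqref{eq:clock-hit-comparison}, obtained from Lemmas~\ref{lem:jump-budget-comparison} and~\ref{lem:poisson-clock-contraction} using the independent clock $N_u$; you then invoke Theorem~\ref{thm:local-to-global} and Corollary~\ref{cor:infinite-volume-comparison}, exactly as the paper does, and reading the statement as the case $J=N_s$, $J'=N_t$ of Corollary~\ref{cor:general-jump-budgets} is an equivalent repackaging of the same argument.
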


\begin{proof}
For every finite $A\subset V\setminus\{x\}$,
Proposition~\ref{prop:frog-one-hit} and
\eqref{eq:clock-hit-comparison} give

\begin{align*}
 \P(R_x^{\lambda,s,Q}\cap A\ne\varnothing)
 &=1-\exp\{-\lambda h_s^Q(x,A)\}\\
 &\le1-\exp\{-\rho(s,t)\lambda h_t^Q(x,A)\}\\
 &=\P(R_x^{\rho(s,t)\lambda,t,Q}\cap A\ne\varnothing).
\end{align*}

Now apply Theorem~\ref{thm:local-to-global} and
Corollary~\ref{cor:infinite-volume-comparison}.
\end{proof}

Assume now that $G$ is an infinite, locally finite, transitive reference
graph, that $Q$ is equivariant under a transitive subgroup of
$\Aut(G)$, and that $o\in V$ is fixed.  We set

\[
 \begin{aligned}
 \theta_Q(\lambda,t)
 &:=\P_{\lambda,t,Q}(|\cC_o|=\infty),
 \qquad \theta_Q(\lambda,0):=0,\\
 \lambda_c^Q(t)
 &:=\sup\{\lambda\ge0:\theta_Q(\lambda,t)=0\}.
 \end{aligned}
\]

\begin{theorem}
\label{thm:strict-critical-density}
For every $0<s<t<\infty$,

\begin{equation}
 \lambda_c^Q(t)
 \le
 \frac{1-e^{-s}}{1-e^{-t}}\lambda_c^Q(s).
 \label{eq:critical-density-comparison}
\end{equation}

In particular, if $0<\lambda_c^Q(s)<\infty$, then
$\lambda_c^Q(t)<\lambda_c^Q(s)$.
\end{theorem}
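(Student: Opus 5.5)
The plan is to deduce \eqref{eq:critical-density-comparison} directly from Proposition~\ref{prop:density-lifespan-tradeoff}, following the same pattern as the critical-value part of Corollary~\ref{cor:general-jump-budgets}.

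First, fix $0<s<t<\infty$ and put $\rho:=\rho(s,t)\in(0,1)$. If $\lambda_c^Q(s)=\infty$, the inequality holds trivially in the extended nonnegative reals, because $\rho>0$. So assume $\lambda_c^Q(s)<\infty$. The map $\lambda\mapsto\theta_Q(\lambda,s)$ is nondecreasing. This follows from Poisson thinning: $R_x^{\lambda,s,Q}$ is stochastically increasing in $\lambda$ by inclusion, or alternatively from Theorem~\ref{thm:local-to-global} together with Proposition~\ref{prop:frog-one-hit}, since $1-e^{-\lambda h}$ increases in $\lambda$. Consequently the set $\{\lambda:\theta_Q(\lambda,s)=0\}$ is downward closed, and every $u>\lambda_c^Q(s)$ satisfies $\theta_Q(u,s)>0$.

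Next, for such $u$, Proposition~\ref{prop:density-lifespan-tradeoff} with $x=o$ gives
\[
 0<\theta_Q(u,s)\le\theta_Q(\rho u,t).
\]
Hence $\rho u$ is not in the zero-survival set at lifespan $t$. By downward closedness at lifespan $t$, this means $\lambda_c^Q(t)\le\rho u$. Letting $u\downarrow\lambda_c^Q(s)$ yields $\lambda_c^Q(t)\le\rho\,\lambda_c^Q(s)$. This argument also covers the case $\lambda_c^Q(s)=0$.

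Finally, if $0<\lambda_c^Q(s)<\infty$, then $\rho<1$ gives the strict inequality $\lambda_c^Q(t)\le\rho\lambda_c^Q(s)<\lambda_c^Q(s)$.

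The only point needing care is the supremum convention: we need downward closedness so that positivity of $\theta_Q$ at $\rho u$ bounds the supremum. This is handled by the density monotonicity noted above. I do not expect a genuine obstacle, since all the analytic content is already in Lemma~\ref{lem:poisson-clock-contraction} and Theorem~\ref{thm:local-to-global}.
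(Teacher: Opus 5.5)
Your proposal is correct and follows essentially the same route as the paper's proof: apply Proposition~\ref{prop:density-lifespan-tradeoff} to get $\theta_Q(\lambda,s)\le\theta_Q(\rho(s,t)\lambda,t)$, use density monotonicity at any $\lambda>\lambda_c^Q(s)$, and let $\lambda\downarrow\lambda_c^Q(s)$. The trivial infinite case and the strictness from $\rho(s,t)<1$ are handled just as the paper does.
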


\begin{proof}
Proposition~\ref{prop:density-lifespan-tradeoff} gives

\[
 \theta_Q(\lambda,s)
 \le \theta_Q(\rho(s,t)\lambda,t),
 \qquad \lambda\ge0.
\]

If $\lambda_c^Q(s)=\infty$, \eqref{eq:critical-density-comparison} is
immediate in the
extended nonnegative reals.  Otherwise, take
$\lambda>\lambda_c^Q(s)$.  Monotonicity in $\lambda$ and the definition of
the critical value imply $\theta_Q(\lambda,s)>0$.  Consequently,
$\lambda_c^Q(t)\le\rho(s,t)\lambda$.  Let
$\lambda\downarrow\lambda_c^Q(s)$.  If
$0<\lambda_c^Q(s)<\infty$, then $\rho(s,t)<1$ also gives
$\lambda_c^Q(t)<\lambda_c^Q(s)$.
\end{proof}

Proposition~\ref{prop:density-lifespan-tradeoff} also has an infinitesimal
form.  For finite $\Lambda\ni o$, put
$f_\Lambda^Q(\lambda,t):=\P_{\lambda,t,Q}(o\rightsquigarrow_\Lambda\Lambda^c)$. Proposition~\ref{prop:russo-all-kernels} below shows that the latter probability is
$C^1$ on the positive parameter quadrant.  For fixed $k>0$,
Proposition~\ref{prop:density-lifespan-tradeoff} says that

\[
 t\longmapsto
 f_\Lambda^Q\left(\frac{k}{1-e^{-t}},t\right)
\]

is nondecreasing.  The chain rule therefore yields

\begin{equation}
 \partial_t f_\Lambda^Q(\lambda,t)
 \ge
 \frac{\lambda}{e^t-1}\,
 \partial_\lambda f_\Lambda^Q(\lambda,t).
 \label{eq:forward-differential-comparison}
\end{equation}

At the critical-parameter level,
$(1-e^{-t})\lambda_c^Q(t)$ is nonincreasing.

\begin{remark}[The long-range case]

Theorem~\ref{thm:local-to-global} applies to a deterministic finite
jump budget $\ell$.  In that case one replaces \eqref{eq:frog-range} by

\[
 R_x^{\lambda,\ell,Q}
 :=\{x\}\cup
 \bigcup_{i=1}^{\eta_x}\{Y_0^{x,i},\ldots,Y_\ell^{x,i}\},
\]

and obtains

\[
 \P(R_x^{\lambda,\ell,Q}\cap A\ne\varnothing)
 =1-\exp\{-\lambda\P_x^Q(H_A\le\ell)\}.
\]

For a heavy-tailed kernel on $\mathbb Z^d$ with $d\ge2$, for example

\[
 Q(x,y)\asymp |x-y|^{-(d+\alpha)},\qquad x\ne y,
\]

the range is finite but need not be connected in the nearest-neighbour
reference graph, so Theorem~\ref{thm:local-to-global} applies.  Such
finite-step ranges occur in \cite{AngelHermonShi2026HeavyTailed}.  The
corresponding continuous-time Poissonisation is covered by
Corollary~\ref{cor:heavy-tailed-critical-curves}. Its critical densities are
positive and finite, and Proposition~\ref{prop:density-lifespan-tradeoff}
strictly orders them as the lifespan varies. Note that for different values of
values of $\alpha$, the hitting probabilities need not be ordered for all targets, so this case is in general not covered.
\end{remark}

\section{The reverse Russo comparison}
\label{sec:reverse-comparison}

Let $G$ be an infinite, connected, locally finite, transitive graph of degree
$\Delta$, and let $Q_G$ be its simple-random-walk kernel.  Throughout this
section, abbreviate
\[
 \P_{\lambda,t}:=\P_{\lambda,t,Q_G},
 \qquad
 \P_x:=\P_x^{Q_G}.
\]
Fix a finite set $\Lambda\ni o$ and write

\begin{equation}
 f_\Lambda(\lambda,t)
 :=\P_{\lambda,t}
 \bigl(o\rightsquigarrow_\Lambda\Lambda^c\bigr).
 \label{eq:finite-volume-exit}
\end{equation}

Here paths are stopped at their first visit to $\Lambda^c$, so every
non-final source lies in $\Lambda$, as required in
Definition~\ref{def:volume-reachability}.

Set

\begin{equation}
 \theta(\lambda,t):=\P_{\lambda,t}(|\cC_o|=\infty).
 \label{eq:survival-probability}
\end{equation}

We set $\theta(\lambda,0):=0$, consistently with the model at zero lifespan.

\subsection*{Russo decompositions and the exit estimate}

For a finite $S\ni o$, retain at each $y\in S$ only those frogs whose
entire time-$t$ traces stay in $S$.  Let

\begin{equation}
 a_{\lambda,t}^S(x)
 :=\P_{\lambda,t}
 \bigl(o\text{ activates }x\text{ using only the retained frogs}\bigr),
 \qquad x\in S.
 \label{eq:internal-activation-weight}
\end{equation}

For an independent test walk started at $x\in S$, let

\begin{equation}
 h_t^S(x):=\P_x(\tau_{S^c}\le t),
 \qquad
 q_t^S(x):=\frac{\mathrm d}{\mathrm d t}h_t^S(x),
 \label{eq:exit-probability-density}
\end{equation}

and let $N_t$ be its number of jumps by time $t$.
The derivative exists and is continuous for $t>0$ by the Gamma-mixture
representation in Lemma~\ref{lem:exit-density} below.
Finally, let

\begin{equation}
 L=L_\Lambda
 :=\{x\in\Lambda:x\not\rightsquigarrow_\Lambda\Lambda^c\}.
 \label{eq:nonexit-set}
\end{equation}

\begin{proposition}
\label{prop:russo-decompositions}
For every $\lambda,t>0$,

\begin{align}
 \partial_\lambda f_\Lambda(\lambda,t)
 &=\sum_{\substack{S\subseteq\Lambda\\o\in S}}
   \P_{\lambda,t}(L=S)
   \sum_{x\in S}a_{\lambda,t}^S(x)h_t^S(x),
 \label{eq:russo-lambda}\\
 \partial_t f_\Lambda(\lambda,t)
 &=\lambda\sum_{\substack{S\subseteq\Lambda\\o\in S}}
   \P_{\lambda,t}(L=S)
   \sum_{x\in S}a_{\lambda,t}^S(x)q_t^S(x).
 \label{eq:russo-time}
\end{align}
\end{proposition}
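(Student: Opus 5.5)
The plan is to write $1-f_\Lambda(\lambda,t)=\P_{\lambda,t}(o\in L)=\sum_{S\ni o}\P_{\lambda,t}(L=S)$. Both derivatives are then computed as pivotal (Margulis--Russo / Mecke) expansions with respect to the Poisson field of frogs. The expansions are organised by the value of $L$ in the configuration \emph{before} the perturbation.

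\textbf{Structure of $\{L=S\}$.} I would first show that, for $S\subseteq\Lambda$ with $o\in S$, the event $\{L=S\}$ is the intersection of two events:
\begin{enumerate}[label=\textup{(\alph*)}]
\item every frog based in $S$ has its time-$t$ trace inside $S$;
\item every $z\in\Lambda\setminus S$ reaches $\Lambda^c$ through $\Lambda$.
\end{enumerate}
Necessity of (a): a frog at $y\in S$ leaving $S$ hits a vertex of $(\Lambda\setminus S)\cup\Lambda^c$, and every such vertex reaches $\Lambda^c$, so $y$ would too. Conversely, under (a) the vertices of $S$ can only reach $S$, hence not $\Lambda^c$. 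Moreover, a witnessing path for $z\in\Lambda\setminus S$ can never pass through $S$, because under (a) it could not leave $S$ again. So (b) is measurable with respect to the frogs at $\Lambda\setminus S$ alone.

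By Poisson thinning, the frogs at $y\in S$ split into independent retained (trace in $S$) and non-retained parts. Hence, conditionally on $\{L=S\}$, the retained frogs on $S$ form the same independent Poisson field used to define $a^S_{\lambda,t}$, and they are independent of everything else. In particular, conditionally on $L=S$, the probability that $o$ activates $x$ equals $a^S_{\lambda,t}(x)$.

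\textbf{Density derivative.} By the Poisson (Mecke) formula,
\[
\partial_\lambda f_\Lambda=\sum_{x\in\Lambda}\E\bigl[\1_{E}(\omega+\delta_{\text{frog at }x})-\1_E(\omega)\bigr],
\]
where $E=\{o\rightsquigarrow_\Lambda\Lambda^c\}$ and the added frog is an independent walk from $x$ with lifespan $t$. This is legitimate because only the finitely many sites of $\Lambda$ matter. The extra frog is pivotal exactly when $o\notin$ the exiting set, so $L=S\ni o$ for some $S$, $x$ is activated from $o$ (necessarily $x\in S$ using retained frogs), and the frog leaves $S$ by time $t$. By the independence established above, this contributes $\P(L=S)\,a^S(x)\,h^S_t(x)$, which is \eqref{eq:russo-lambda}.

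\textbf{Time derivative (main obstacle).} Here I would compare lifespans $t$ and $t+\delta$ using the same walks. The event $E$ can only switch on at $t+\delta$ if some frog's trace gains a new vertex during $(t,t+\delta]$. With probability $1-O(\delta^2)$ (uniformly, since $\Lambda$ is finite and frogs are Poisson), at most one frog among those based in $\Lambda$ makes a jump in that window. Using the Mecke formula for the frog at $x$ that moves, together with the structure of $\{L=S\}$ at time $t$, the switch happens iff:
\begin{itemize}
\item $L=S\ni o$;
\item the moving frog sits at an $x\in S$ that $o$ activates through the other frogs;
\item that frog's first exit from $S$ occurs in $(t,t+\delta]$.
\end{itemize}
Frogs whose new vertex lies in $S$ change nothing; frogs based outside $S$ are irrelevant by monotonicity. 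Removing the moving frog from the retained field does not change the law of the remaining retained frogs (Palm property of the Poisson process), so the conditional activation probability is again $a^S(x)$. The exit-time density is $q^S_t(x)$ by Lemma~\ref{lem:exit-density}, and the Mecke intensity contributes the factor $\lambda$. Dividing by $\delta$ and letting $\delta\downarrow0$ gives \eqref{eq:russo-time}, provided continuity of $q^S_t$ and dominated convergence control the $O(\delta^2)$ error and the $\delta$-dependence of $L$.

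I expect the delicate point to be the rigorous interchange in this last step. Two issues need care: the conditioning on $L$ at time $t$ rather than at $t+\delta$, and the Palm argument showing that removing the single moving frog preserves the retained-field law.
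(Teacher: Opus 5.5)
Your proposal is correct. For $\partial_\lambda f_\Lambda$ it matches the paper's argument. Your (a) and (b) are its $\mathsf{Stay}(S)$ and $\mathsf{Exit}(S)$. Strictly, on (a) the event (b) coincides with each $z\in\Lambda\setminus S$ reaching $\Lambda^c$ using only clouds based in $\Lambda\setminus S$, and it is this form that gives independence from the frogs at $S$. The rest is the same: thinning into staying and exiting traces, then a one-trace Poisson Russo formula.

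For $\partial_t f_\Lambda$ the routes differ. The paper gives the frogs at each site $x$ their own lifespan $t_x$ and perturbs one coordinate at a time. Because no frog born at $x$ is needed to activate $x$, the increment $\sum_S\P_{\lambda,t}(L=S)a^S_{\lambda,t}(x)\bigl(1-\exp\{-\lambda\P_x(\tau_{S^c}\in(t,t+\varepsilon])\}\bigr)$ is exact, with no error term. The price is that the paper must then show the multi-lifespan probability is $C^1$, in order to use the diagonal chain rule $\frac{\mathrm d}{\mathrm dt}F_\Lambda(t,\ldots,t)=\sum_x\partial_{t_x}F_\Lambda$. You perturb all lifespans at once and pay with a second-order error instead. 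That step is easy to make rigorous without conditioning on ``exactly one frog moves''. Let $E_r$ be the exit event at lifespan $r$, and let $M$ be the number of frogs based in $\Lambda$ whose clocks ring in $(t,t+\delta]$; this is Poisson with mean at most $\lambda|\Lambda|\delta$. Let $Z\le M$ be the number of pivotal frogs in your sense. Then $\{Z\ge1\}\subseteq E_{t+\delta}\setminus E_t\subseteq\{Z\ge1\}\cup\{M\ge2\}$ and $\E[Z]-\P(Z\ge1)\le\E[M(M-1)]$. The Mecke formula gives $\E[Z]=\lambda\sum_S\sum_{x\in S}\P_{\lambda,t}(L=S)a^S_{\lambda,t}(x)\P_x(\tau_{S^c}\in(t,t+\delta])$, hence $\P(E_{t+\delta}\setminus E_t)=\E[Z]+O(\delta^2)$. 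A small correction: moving frogs based outside $S$ are harmless because they leave (a) intact, not by monotonicity.

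In both routes the forward increment identifies only a right derivative. To finish, note that the right-hand side of \eqref{eq:russo-time} is continuous in $t$. The factors $\P_{\lambda,t}(L=S)$ and $a^S_{\lambda,t}(x)$ depend on finitely many collapsed-trace intensities that are continuous in $t$; the paper gets this from the finite-state chain recording the current vertex and the visited subset of $\Lambda$. The factor $q^S_t$ is continuous by the Gamma-mixture domination. A continuous function with a continuous right derivative is differentiable. Here $f_\Lambda$ is even Lipschitz in $t$, since $0\le f_\Lambda(\lambda,t+\delta)-f_\Lambda(\lambda,t)\le\P(M\ge1)$. The same continuity, jointly in $(\lambda,t)$, also gives the $C^1$ property of $f_\Lambda$ that the paper uses later along characteristics.
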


\begin{proof}
Apply the self-contained kernel-independent
Proposition~\ref{prop:russo-all-kernels} below with $Q$ equal to simple
random walk.  Its proof also shows that $f_\Lambda$ is $C^1$ on the positive
parameter quadrant.  The resulting identities agree with
\cite[Equations~(3.5) and~(3.10)]{AngelDeLaRivaHermonShi2026}.
\end{proof}

\begin{lemma}
\label{lem:exit-weighted-jumps}
For every $\lambda,t>0$, there is a finite constant
$C(\Delta,\lambda,t)>0$, continuous in $(\lambda,t)$, such that for every
finite $S\ni o$,

\begin{align}
 &\sum_{x\in S}a_{\lambda,t}^S(x)h_t^S(x)
   \E_x[N_t\mid\tau_{S^c}\le t]
 \notag\\
 &\hspace{35mm}\le
 C(\Delta,\lambda,t)
 \sum_{x\in S}a_{\lambda,t}^S(x)h_t^S(x).
 \label{eq:exit-weighted-jumps}
\end{align}
\end{lemma}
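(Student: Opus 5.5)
The plan is to bound the left-hand side of \eqref{eq:exit-weighted-jumps} by charging every $x$ to vertices of $S$ adjacent to $S^c$. Far-from-boundary vertices are where $\E_x[N_t\mid\tau_{S^c}\le t]$ can be large. Their contribution is paid for by the factorial smallness of $\P(N_t\ge d)$, which beats the exponential loss in activation weight incurred when moving from $x$ to the boundary. Write $\partial_{\mathrm{in}}S:=\{y\in S:y\sim S^c\}$ and $d(x):=d_G(x,S^c)-1$, the graph distance from $x$ to $\partial_{\mathrm{in}}S$.

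\textbf{Step 1 (rewrite).} Since $h_t^S(x)\E_x[N_t\mid\tau_{S^c}\le t]=\E_x[N_t\1\{\tau_{S^c}\le t\}]$, and exiting requires at least $d(x)+1$ jumps, the Poisson law of $N_t$ gives
\[
 \E_x[N_t\1\{\tau_{S^c}\le t\}]\le\E[N_t\1\{N_t\ge d(x)+1\}]=t\,\P(N_t\ge d(x)).
\]
This quantity is at most $t$, and for large $d$ it is at most $t^{d+1}/d!$ up to a constant factor $e^{t}$.

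\textbf{Step 2 (lower bounds near the boundary).} Fix $x$ with $d(x)=d$ and a geodesic $x=y_0,y_1,\dots,y_d\in\partial_{\mathrm{in}}S$ inside $S$. At each $y_j$, the frogs that make exactly one jump, to $y_{j+1}$, by time $t$ are retained, since their traces stay in $S$. Their number is Poisson with mean $\lambda te^{-t}/\Delta$. Let $c:=1-\exp\{-\lambda te^{-t}/\Delta\}$ be the probability that at least one such frog exists. The events $\{o\text{ activates }x\text{ with retained frogs}\}$ and $\{$each $y_j$ carries such a frog$\}$ are increasing in the Poisson frog configurations. Harris--FKG for the independent product structure therefore gives
\[
 a^S_{\lambda,t}(y_d)\ge c^{d}a^S_{\lambda,t}(x).
\]
(If the product structure makes FKG awkward, I would instead use the fact that the single-jump frogs at the $y_j$ form an independent Poisson thinning from those relevant for activating $x$, up to monotonicity.) In addition, $h_t^S(y)\ge te^{-t}/\Delta$ for $y\in\partial_{\mathrm{in}}S$.

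\textbf{Step 3 (summation).} Each $y\in\partial_{\mathrm{in}}S$ is the chosen endpoint for at most $\Delta^d$ vertices $x$ with $d(x)=d$. Combining Steps 1 and 2, the left side of \eqref{eq:exit-weighted-jumps} is at most
\[
 \sum_{y\in\partial_{\mathrm{in}}S}a^S_{\lambda,t}(y)\sum_{d\ge0}\Delta^d c^{-d}\,t\,\P(N_t\ge d)
 \le\frac{\Delta e^{t}}{t}\Bigl(\sum_{d\ge0}\Delta^dc^{-d}\,t\,\P(N_t\ge d)\Bigr)\sum_{y}a^S_{\lambda,t}(y)h^S_t(y).
\]
The inner series converges because $\P(N_t\ge d)\le t^d/d!$. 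It depends continuously on $(\lambda,t)$ through $c$, which is continuous and positive for $\lambda,t>0$. The last sum is at most $\sum_{x\in S}a^S_{\lambda,t}(x)h^S_t(x)$, which yields $C(\Delta,\lambda,t)$. The bound is uniform in $S$ because only the local quantities $\Delta$ and $c$ enter.

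The main obstacle is Step 2. It requires justifying that the chain of one-jump retained frogs can be attached to an activation of $x$ by retained frogs with a multiplicative cost $c^d$. This holds without double counting, since frogs at sites already used in activating $x$ still contribute independently through the increasing-event correlation inequality, and retention of one-jump frogs is unaffected by how large or deep $S$ is.
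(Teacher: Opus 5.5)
Your argument is correct, and it takes a genuinely different route from the paper's.

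\textbf{The paper's route.} The paper does not prove the estimate directly. For connected $S$ it quotes \cite[Equation~(3.12) in Lemma~3.3]{AngelDeLaRivaHermonShi2026}, and it imports continuity of the constant from there. For a general finite $S\ni o$ it reduces to the component $S_o$ of $o$. This uses three facts: $a^S_{\lambda,t}$ vanishes off $S_o$; $a^S_{\lambda,t}=a^{S_o}_{\lambda,t}$ on $S_o$; and for walks started in $S_o$, the nearest-neighbour exit times from $S$ and from $S_o$ coincide.

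\textbf{What your route buys.} Your charging argument is self-contained and needs no such reduction. A geodesic from $x$ to $S^c$ automatically stays in the component of $x$ until its final step. It also yields an explicit constant,
\[
 C(\Delta,\lambda,t)=\Delta e^{t}\sum_{d\ge0}\Bigl(\frac{\Delta}{c}\Bigr)^{d}\P(N_t\ge d)
 \le\Delta e^{t}\exp\Bigl\{\frac{\Delta t}{c}\Bigr\},
 \qquad c=1-\exp\Bigl\{-\frac{\lambda te^{-t}}{\Delta}\Bigr\}.
\]
Finiteness and continuity on $(0,\infty)^2$ follow from the locally uniform domination $\P(N_t\ge d)\le t^d/d!$. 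The constant is crude, degenerating exponentially in $1/\lambda$ as $\lambda\downarrow0$. However, the lemma is only used through quantities like $\max_{\cK}C/t$ on compact rectangles, so this costs nothing.

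\textbf{On Step~2.} The Harris--FKG route is the right one, and it is fully rigorous here. Classify the retained frogs at each site of $S$ by trace set and by whether they make exactly one jump. Both events are then increasing in finitely many independent Poisson counts, so Harris' inequality for product measures applies.

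You should drop your parenthetical fallback. Single-jump frogs on the geodesic may themselves be needed to activate $x$. Removing them first would only compare $a^S_{\lambda,t}(y_d)$ with an activation probability that can be smaller than $a^S_{\lambda,t}(x)$, which is not the bound you need.
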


\begin{proof}
For connected $S$, inequality \eqref{eq:exit-weighted-jumps}, after
multiplication by $\lambda$, is
\cite[Equation~(3.12) in Lemma~3.3]{AngelDeLaRivaHermonShi2026}.  The
reciprocal of the constant in that lemma is continuous and strictly positive
on the positive parameter quadrant, so $C$ is finite and continuous there.

\cite[Lemma~3.3]{AngelDeLaRivaHermonShi2026} is stated for connected $S$.
For a general finite $S\ni o$,
let $S_o$ be the reference-graph component of $o$ in $S$.  Internal
nearest-neighbour activation from $o$ cannot reach a vertex in
$S\setminus S_o$, so $a_{\lambda,t}^S(x)=0$ there.  For $x\in S_o$, a
nearest-neighbour trace started in $S_o$ stays in $S$ if and only if it stays
in $S_o$: a first visit to another component of $S$ would have to cross a
vertex of $S^c$.  Consequently
$a_{\lambda,t}^S(x)=a_{\lambda,t}^{S_o}(x)$, and the first exit times from
$S_o$ and from $S$ agree pathwise for walks started in $S_o$.  Thus both
sides of \eqref{eq:exit-weighted-jumps} reduce to the connected set $S_o$,
to which \cite[Lemma~3.3]{AngelDeLaRivaHermonShi2026} applies.
\end{proof}

\Needspace{10\baselineskip}
We next relate the exit-time density in the lifespan derivative to the number
of jumps made by an exiting walk.  Let $(Y_n)_{n\ge0}$ be the embedded simple
random walk and, for $k\ge1$, set

\[
 H_S:=\inf\{n\ge1:Y_n\notin S\},
 \qquad
 g_k(t):=\frac{t^{k-1}e^{-t}}{(k-1)!}.
\]

Thus $g_k$ is the density of a $\operatorname{Gamma}(k,1)$ random variable.

\begin{lemma}
\label{lem:exit-density}
For every finite $S\ni x$ and every $t>0$,
\begin{equation}
 t q_t^S(x)
 =\E_x\bigl[N_t\1_{\{\tau_{S^c}\le t\}}\bigr]-t h_t^S(x).
 \label{eq:exit-density-identity}
\end{equation}
In particular,
\begin{equation}
 t q_t^S(x)
 \le
 \E_x\bigl[N_t\1_{\{\tau_{S^c}\le t\}}\bigr]
 =h_t^S(x)\E_x[N_t\mid\tau_{S^c}\le t].
 \label{eq:exit-density-bound}
\end{equation}
\end{lemma}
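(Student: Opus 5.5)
We plan to use the Gamma-mixture representation of the exit probability. The continuous-time walk is $X_u=Y_{N_u}$, with $N$ a rate-one Poisson process independent of the embedded chain. Hence $\{\tau_{S^c}\le t\}=\{S_{H_S}\le t\}$, where $S_k$ is the $k$-th arrival time of $N$. Here we use that $x\in S$, so the first exit requires at least one jump, and $H_S\ge1$. This gives
\[
 h_t^S(x)=\sum_{k\ge1}\P_x(H_S=k)\,F_k(t),
 \qquad F_k(t)=\P(N_t\ge k)=\int_0^t g_k(r)\di r .
\]
The same device appears in Lemma~\ref{lem:jump-budget-comparison}. Differentiating term by term gives $q_t^S(x)=\sum_{k\ge1}\P_x(H_S=k)g_k(t)$. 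This is justified because $g_k(t)\le t^{k-1}/(k-1)!$, which is summable locally uniformly in $t>0$. The same bound shows that $q^S_t(x)$ is continuous, as announced before the lemma.

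Next we compute the right-hand side of \eqref{eq:exit-density-identity} with the same decomposition. By independence,
\[
 \E_x\bigl[N_t\1_{\{\tau_{S^c}\le t\}}\bigr]
 =\sum_{k\ge1}\P_x(H_S=k)\,\E\bigl[N_t\1_{\{N_t\ge k\}}\bigr].
\]
The identity then reduces to the scalar Poisson identity
\[
 t\,g_k(t)=\E\bigl[N_t\1_{\{N_t\ge k\}}\bigr]-t\,\P(N_t\ge k),\qquad k\ge1.
\]
To verify it, use $\E[N_t\1_{\{N_t\ge k\}}]=\sum_{j\ge k} j e^{-t}t^j/j!=t\sum_{j\ge k-1}e^{-t}t^{j}/j!=t\,\P(N_t\ge k-1)$. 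The difference therefore equals $t\,\P(N_t=k-1)=t\cdot e^{-t}t^{k-1}/(k-1)!=t\,g_k(t)$. Summing over $k$ proves \eqref{eq:exit-density-identity}. All series have nonnegative terms and are dominated by $\E_x[N_t]=t<\infty$, so Tonelli's theorem justifies the interchanges.

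The bound \eqref{eq:exit-density-bound} follows by dropping the nonnegative term $t h_t^S(x)$. The final equality is the definition of conditional expectation when $h_t^S(x)>0$. When $h_t^S(x)=0$, both sides vanish under the convention $0\cdot(\text{anything})=0$.

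The only point needing care is the term-by-term differentiation. This is routine given the local uniform bound on $g_k$, so we expect no real obstacle.
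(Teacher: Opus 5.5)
Your proposal is correct and follows essentially the same route as the paper: the Gamma-mixture representation over $H_S$, locally uniform domination of $g_k$ to justify term-by-term differentiation, the scalar Poisson identity $t g_k(t)=\E[N_t\1_{\{N_t\ge k\}}]-t\P(N_t\ge k)$, and summation over $k$ using independence of the clock and the embedded chain. The one difference is how you handle $h_t^S(x)=0$. The paper rules this case out: since $G$ is infinite and connected and $S$ is finite, there is a nearest-neighbour path from $x$ to $S^c$, so the conditional expectation is genuinely defined. You instead treat the degenerate case by convention, which the paper's remark after the lemma also permits for general kernels.
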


\begin{proof}
Because $G$ is infinite and connected while $S$ is finite, there is a finite
nearest-neighbour path from $x$ to $S^c$.  Hence $h_t^S(x)>0$ for every
$t>0$, so the conditional expectation in
\eqref{eq:exit-density-bound} is well defined.

The embedded chain and the Poisson clock are independent, and the exit event
is $\{H_S\le N_t\}$.  We condition on $H_S$ to obtain
\begin{equation}
 \begin{aligned}
  h_t^S(x)&=\sum_{k\ge1}\P_x(H_S=k)\P(N_t\ge k),\\
  q_t^S(x)&=\sum_{k\ge1}\P_x(H_S=k)g_k(t).
 \end{aligned}
 \label{eq:exit-time-mixtures}
\end{equation}
To justify differentiating the mixture locally uniformly, fix
$0<a<b<\infty$.  For $u\in[a,b]$,
\begin{equation}
 0\le\P_x(H_S=k)g_k(u)
 \le e^{-a}\frac{b^{k-1}}{(k-1)!},
 \label{eq:gamma-mixture-domination}
\end{equation}
and the majorant is summable over $k\ge1$.

For every $k\ge1$, the Poisson identity
$\E[N_t\1_{\{N_t\ge k\}}]=t\P(N_t\ge k-1)$ gives
\begin{equation}
 \begin{aligned}
 t g_k(t)
 &=k\P(N_t=k)=t\P(N_t=k-1)\\
 &=\E[N_t\1_{\{N_t\ge k\}}]-t\P(N_t\ge k).
 \end{aligned}
 \label{eq:gamma-poisson-pointwise}
\end{equation}
Now multiply by $\P_x(H_S=k)$ and sum over $k$.  Independence and
\eqref{eq:exit-time-mixtures} yield \eqref{eq:exit-density-identity}.
Both nonnegative sums on the right are finite, being bounded by $t$.
Dropping the term $-t h_t^S(x)$ and conditioning on the exit event gives
\eqref{eq:exit-density-bound}.
\end{proof}

The identity \eqref{eq:exit-density-identity} and the first inequality in
\eqref{eq:exit-density-bound} use only independence of the rate-one
Poisson clock and the embedded chain.  They therefore hold for every
rate-one kernel $Q$.  If an exit event has probability zero, we may use the
unconditional expressions rather than the conditional expectation.

\Needspace{5\baselineskip}
We can now combine the Russo decompositions with the two jump-count
estimates.

\begin{proposition}
\label{prop:reverse-differential-inequality}
For every finite $\Lambda\ni o$ and all $\lambda,t>0$,

\begin{equation}
 \partial_t f_\Lambda(\lambda,t)
 \le
 \frac{\lambda C(\Delta,\lambda,t)}{t}\,
 \partial_\lambda f_\Lambda(\lambda,t).
 \label{eq:reverse-differential-inequality}
\end{equation}

Together, \eqref{eq:forward-differential-comparison} and
\eqref{eq:reverse-differential-inequality} give

\begin{equation}
 \frac{\lambda}{e^t-1}\,\partial_\lambda f_\Lambda
 \le \partial_t f_\Lambda
 \le \frac{\lambda C(\Delta,\lambda,t)}{t}\,
       \partial_\lambda f_\Lambda.
 \label{eq:two-sided-differential-inequality}
\end{equation}
\end{proposition}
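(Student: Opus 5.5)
The argument compares the two Russo decompositions of Proposition~\ref{prop:russo-decompositions} term by term. Both \eqref{eq:russo-lambda} and \eqref{eq:russo-time} sum over the same finite sets $S\subseteq\Lambda$ with $o\in S$, with the same weights $\P_{\lambda,t}(L=S)\ge0$ and the same internal activation weights $a_{\lambda,t}^S(x)\ge0$. The only difference is the factor $h_t^S(x)$ in the density derivative versus $\lambda q_t^S(x)$ in the lifespan derivative. It therefore suffices to prove, for every fixed $S$ in the sum,
\[
 \lambda\sum_{x\in S}a_{\lambda,t}^S(x)q_t^S(x)
 \le
 \frac{\lambda C(\Delta,\lambda,t)}{t}\sum_{x\in S}a_{\lambda,t}^S(x)h_t^S(x).
\]
Multiplying by $\P_{\lambda,t}(L=S)$ and summing over $S$ then gives \eqref{eq:reverse-differential-inequality}.

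For a fixed $S$, I would first apply Lemma~\ref{lem:exit-density}. By \eqref{eq:exit-density-bound},
\[
 q_t^S(x)\le t^{-1}h_t^S(x)\E_x[N_t\mid\tau_{S^c}\le t]
\]
for every $x\in S$; the conditional expectation is well defined because $h_t^S(x)>0$. Multiplying by $\lambda a_{\lambda,t}^S(x)\ge0$ and summing over $x$ bounds the left side by
\[
 \frac{\lambda}{t}\sum_{x\in S}a_{\lambda,t}^S(x)h_t^S(x)\E_x[N_t\mid\tau_{S^c}\le t].
\]
Lemma~\ref{lem:exit-weighted-jumps} bounds this exit-weighted jump sum by $C(\Delta,\lambda,t)\sum_{x}a_{\lambda,t}^S(x)h_t^S(x)$, uniformly in $S$, which is exactly the claimed per-$S$ inequality.

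The two-sided bound \eqref{eq:two-sided-differential-inequality} then follows by juxtaposing this with the forward inequality \eqref{eq:forward-differential-comparison}. That inequality was derived from the monotonicity of $t\mapsto f_\Lambda(k/(1-e^{-t}),t)$ together with the $C^1$ regularity supplied by Proposition~\ref{prop:russo-all-kernels}. The chain rule applies there because $\partial_t\bigl(k/(1-e^{-t})\bigr)=-\lambda/(e^t-1)$ when $\lambda=k/(1-e^{-t})$.

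All the real difficulty sits in the geometric input, Lemma~\ref{lem:exit-weighted-jumps}, which is already available. The remaining point to watch is that the constant is uniform in $S$, hence in $\Lambda$; the sets $S$ arising from $L$ may be disconnected, and Lemma~\ref{lem:exit-weighted-jumps} already covers arbitrary finite $S\ni o$. Sets with vanishing weight contribute zero to both sides, so they cause no trouble.
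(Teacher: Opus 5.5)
Your proposal is correct and follows the paper's proof. Like the paper, you substitute the bound $tq_t^S(x)\le h_t^S(x)\E_x[N_t\mid\tau_{S^c}\le t]$ from Lemma~\ref{lem:exit-density} into \eqref{eq:russo-time}, apply Lemma~\ref{lem:exit-weighted-jumps} separately for each $S$, identify the result with \eqref{eq:russo-lambda}, and take the lower bound directly from \eqref{eq:forward-differential-comparison}.
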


\begin{proof}
Substitute Lemma~\ref{lem:exit-density} into \eqref{eq:russo-time} and
apply Lemma~\ref{lem:exit-weighted-jumps} for each value of $S$ to obtain

\begin{align*}
 \partial_t f_\Lambda(\lambda,t)
 &\le \frac{\lambda}{t}
 \sum_{\substack{S\subseteq\Lambda\\o\in S}}
 \P_{\lambda,t}(L=S)
 \sum_{x\in S}a_{\lambda,t}^S(x)h_t^S(x)
 \E_x[N_t\mid\tau_{S^c}\le t]\\
 &\le \frac{\lambda C(\Delta,\lambda,t)}{t}
 \sum_{\substack{S\subseteq\Lambda\\o\in S}}
 \P_{\lambda,t}(L=S)
 \sum_{x\in S}a_{\lambda,t}^S(x)h_t^S(x)\\
 &=\frac{\lambda C(\Delta,\lambda,t)}{t}
   \partial_\lambda f_\Lambda(\lambda,t),
\end{align*}

where the last equality is \eqref{eq:russo-lambda}.  The lower inequality in
\eqref{eq:two-sided-differential-inequality} is
\eqref{eq:forward-differential-comparison}.
\end{proof}

We close the section by turning the resulting differential inequality into a parameter comparison by
integrating along suitable curves in the $(\lambda,t)$-plane.

\begin{theorem}
\label{thm:reverse-characteristic-comparison}
Let

\[
 \cK=[a,b]\times[u,v]\Subset(0,\infty)^2,
 \qquad
 M_{\cK}:=\max_{(\lambda,t)\in\cK}
 \frac{C(\Delta,\lambda,t)}{t}.
\]

Suppose $u\le s<t\le v$, $\lambda_0\in[a,b]$, and

\[
 \lambda_r:=\lambda_0e^{-M_{\cK}(r-s)}\in[a,b]
 \qquad(s\le r\le t).
\]

Then

\begin{equation}
 \theta(\lambda_0e^{-M_{\cK}(t-s)},t)
 \le \theta(\lambda_0,s).
 \label{eq:reverse-characteristic-comparison}
\end{equation}
\end{theorem}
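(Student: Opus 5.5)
The plan is to integrate the reverse differential inequality \eqref{eq:reverse-differential-inequality} along the curve $r\mapsto(\lambda_r,r)$, $s\le r\le t$, in finite volume. Then we pass to the infinite graph by the exhaustion in Corollary~\ref{cor:infinite-volume-comparison}.

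Fix a finite $\Lambda\ni o$ and set $\phi(r):=f_\Lambda(\lambda_r,r)$. By Proposition~\ref{prop:russo-decompositions}, $f_\Lambda$ is $C^1$ on the open quadrant, so $\phi$ is $C^1$ on $[s,t]$. The chain rule gives
\[
 \phi'(r)=\partial_t f_\Lambda(\lambda_r,r)-M_{\cK}\lambda_r\,\partial_\lambda f_\Lambda(\lambda_r,r).
\]
Since $(\lambda_r,r)\in\cK$ by hypothesis, we have $C(\Delta,\lambda_r,r)/r\le M_{\cK}$. Moreover $\partial_\lambda f_\Lambda\ge0$, by \eqref{eq:russo-lambda} or by monotonicity in $\lambda$. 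Inequality \eqref{eq:reverse-differential-inequality} then yields $\partial_t f_\Lambda(\lambda_r,r)\le M_{\cK}\lambda_r\partial_\lambda f_\Lambda(\lambda_r,r)$, so $\phi'\le0$ on $[s,t]$. Consequently
\[
 f_\Lambda(\lambda_0e^{-M_{\cK}(t-s)},t)=\phi(t)\le\phi(s)=f_\Lambda(\lambda_0,s).
\]
The maximum $M_{\cK}$ is finite because $C$ is continuous and $\cK$ is a compact set bounded away from $t=0$, so it costs nothing here.

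To pass to $\theta$, take finite $\Lambda_n\uparrow V$ with $o\in\Lambda_1$. The proof of Corollary~\ref{cor:infinite-volume-comparison} shows that $\{o\rightsquigarrow_{\Lambda_n}\Lambda_n^c\}$ decreases to $\{|\cC_o|=\infty\}$ under any parameter pair. Applying the finite-volume inequality for each $n$ and letting $n\to\infty$ by continuity from above gives \eqref{eq:reverse-characteristic-comparison}.

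The main obstacle is ensuring that \eqref{eq:reverse-differential-inequality} and the $C^1$ regularity really hold at every point of the curve, including the endpoints $r=s$ and $r=t$. Both are stated for all $\lambda,t>0$ and every finite $\Lambda$, and the curve stays inside $\cK\subset(0,\infty)^2$, so this is fine. A more careful variant would avoid differentiability at the endpoints by using the mean value theorem on the open interval together with continuity of $f_\Lambda$.
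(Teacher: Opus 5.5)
Your proof is correct and follows the paper's argument: you integrate \eqref{eq:reverse-differential-inequality} along the characteristic $r\mapsto(\lambda_r,r)$ using the chain rule, the bound $C(\Delta,\lambda_r,r)/r\le M_{\cK}$ on $\cK$ and $\partial_\lambda f_\Lambda\ge0$, and then pass to $\theta$ by the decreasing exhaustion from Corollary~\ref{cor:infinite-volume-comparison} with continuity from above. The only cosmetic difference is that you use an arbitrary exhaustion $\Lambda_n\uparrow V$, whereas the paper takes the balls $B_G(o,n)$.
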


\begin{proof}
For every finite $\Lambda\ni o$, the chain rule,
\eqref{eq:reverse-differential-inequality}, and monotonicity in $\lambda$
give

\begin{align*}
 \frac{\mathrm d}{\mathrm d r}f_\Lambda(\lambda_r,r)
 &=\partial_t f_\Lambda(\lambda_r,r)
   -M_{\cK}\lambda_r\partial_\lambda f_\Lambda(\lambda_r,r)\\
 &\le\lambda_r
 \left(\frac{C(\Delta,\lambda_r,r)}{r}-M_{\cK}\right)
 \partial_\lambda f_\Lambda(\lambda_r,r)
 \le0.
\end{align*}

Integrate from $s$ to $t$.  For $\Lambda_n=B_G(o,n)$, the proof of
Corollary~\ref{cor:infinite-volume-comparison} identifies the decreasing
limit of $\{o\rightsquigarrow_{\Lambda_n}\Lambda_n^c\}$ with
$\{|\cC_o|=\infty\}$.
Continuity from above gives \eqref{eq:reverse-characteristic-comparison}.
\end{proof}

\section{Critical curves and Conjecture~1}
\label{sec:critical-curves}

Throughout this section $G$ is an infinite, connected, locally finite,
transitive graph of superlinear growth, and the frogs follow rate-one
continuous-time simple random walk.  We abbreviate

\[
 \theta(\lambda,t):=\theta_Q(\lambda,t),
 \qquad
 f(t):=\lambda_c(t).
\]

By \cite[Theorem~1.4(1),(2)]{AngelDeLaRivaHermonShi2026} and
\cite{DuminilCopinGoswamiRaoufiSeveroYadin2020},

\begin{equation}
 0<\lambda_c(t)<\infty,\qquad t>0,
 \label{eq:critical-density-positive-finite}
\end{equation}

\cite[Proposition~2.1]{AngelDeLaRivaHermonShi2026} also gives

\begin{equation}
 \lambda_c(t)\ge\frac1t.
 \label{eq:critical-density-small-time}
\end{equation}

Monotonicity in $\lambda$ and the definition of $\lambda_c(t)$ imply

\begin{equation}
 \lambda<\lambda_c(t)\Longrightarrow\theta(\lambda,t)=0,
 \qquad
 \lambda>\lambda_c(t)\Longrightarrow\theta(\lambda,t)>0.
 \label{eq:critical-zero-supercritical}
\end{equation}

Indeed, the zero set is downward closed and contains $0$, so
\eqref{eq:critical-zero-supercritical} follows from its supremum.

\subsection{Regularity of the finite phase boundary}

We now apply the forward and reverse parameter comparisons to the
critical-density curve.

\begin{theorem}
\label{thm:continuous-critical-density}
The map

\[
 t\longmapsto\lambda_c(t)
\]

is continuous and strictly decreasing on $(0,\infty)$.  Moreover,
$-\log\lambda_c$ is locally bi-Lipschitz.
\end{theorem}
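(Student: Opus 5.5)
The plan is to combine the forward comparison of Theorem~\ref{thm:strict-critical-density} with the reverse comparison of Theorem~\ref{thm:reverse-characteristic-comparison}. Both are turned into two-sided bounds on $\log\lambda_c(s)-\log\lambda_c(t)$ for $u\le s<t\le v$ in a fixed compact interval $[u,v]\subset(0,\infty)$. Throughout, \eqref{eq:critical-density-positive-finite} guarantees $0<\lambda_c<\infty$.

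\emph{Forward bound and strict decrease.} By \eqref{eq:critical-density-comparison},
\[
 \log\lambda_c(s)-\log\lambda_c(t)\ge\log\frac{1-e^{-t}}{1-e^{-s}}.
\]
The right-hand side is positive, which already gives strict decrease. Its derivative in $t$ is $1/(e^t-1)\ge 1/(e^v-1)$, so the right-hand side is at least $(t-s)/(e^v-1)$. This is the lower Lipschitz bound for $-\log\lambda_c$.

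\emph{Reverse bound.} Monotonicity gives $\lambda_c(v)\le\lambda_c(t)\le\lambda_c(s)\le\lambda_c(u)$, and both endpoints are positive and finite. Choose $a:=\lambda_c(v)/2$ and $b:=2\lambda_c(u)$, put $\cK=[a,b]\times[u,v]$, and let $M_\cK$ be as in Theorem~\ref{thm:reverse-characteristic-comparison}; it is finite by continuity of $C$.

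We claim $\lambda_c(t)\ge\lambda_c(s)e^{-M_\cK(t-s)}$. Take $\lambda_0<\lambda_c(s)$ with $\lambda_0$ close enough to $\lambda_c(s)$ that the path $\lambda_r=\lambda_0e^{-M_\cK(r-s)}$, $r\in[s,t]$, stays in $[a,b]$. This requires $\lambda_0e^{-M_\cK(t-s)}\ge a$, which holds when $t-s$ is small, say $t-s\le\delta:=\log(2)/M_\cK$ and $\lambda_0\ge\lambda_c(s)\cdot 0.99$ after adjusting constants. It also requires $\lambda_0\le b$, which is automatic. Then $\theta(\lambda_0,s)=0$ by \eqref{eq:critical-zero-supercritical}, and \eqref{eq:reverse-characteristic-comparison} yields $\theta(\lambda_0e^{-M_\cK(t-s)},t)=0$. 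Hence $\lambda_c(t)\ge\lambda_0e^{-M_\cK(t-s)}$. Letting $\lambda_0\uparrow\lambda_c(s)$ gives the claim, that is,
\[
 \log\lambda_c(s)-\log\lambda_c(t)\le M_\cK(t-s)
\]
whenever $t-s\le\delta$. For general $u\le s<t\le v$, subdivide $[s,t]$ into steps of length at most $\delta$ and telescope, using the same $\cK$ and hence the same constant.

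\emph{Conclusion and main obstacle.} Both bounds together show that $-\log\lambda_c$ is bi-Lipschitz on $[u,v]$ with constants $1/(e^v-1)$ and $M_\cK$. In particular $\lambda_c$ is continuous, and it is strictly decreasing. The only delicate point is keeping the characteristic curve inside the compact box where $M_\cK$ is controlled. This needs a priori two-sided bounds on $\lambda_c$ over $[u,v]$, supplied by monotonicity and positivity/finiteness. It also needs the subdivision argument, so that the box never has to depend on the unknown continuity of $\lambda_c$.
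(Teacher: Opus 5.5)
Your argument is correct, but it is organised differently from the paper's proof. The paper first proves right- and left-continuity separately. Each time it runs the reverse characteristic from a zero-survival point in a small box, $[\lambda/2,\lambda]\times[t_0,t_0+\eta]$ or $[\lambda/2,\lambda]\times[t_0-\eta,t_0]$. Only then does it use the continuity just obtained to choose a neighbourhood $I_0$ of $t_0$ on which $\frac34 f_0\le\lambda_c\le\frac54 f_0$. This fixes the box $[f_0/4,3f_0/2]\times I_0$ for the two-sided estimate \eqref{eq:critical-curve-two-sided}, on an interval of length at most $(\log 2)/M_\cK$.

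You instead observe that monotonicity alone already traps $\lambda_c$ between $\lambda_c(v)$ and $\lambda_c(u)$ on $[u,v]$. So a single box can be fixed before anything is known about continuity. The upper Lipschitz bound then holds on short steps and extends to all of $[u,v]$ by telescoping, and continuity becomes a consequence rather than a prerequisite. This is shorter and drops the separate qualitative continuity argument. It also gives bi-Lipschitz bounds on whole compact intervals directly, which is the chaining the paper only carries out later, in the proof of Proposition~\ref{prop:inverse-critical-continuity}. The cost is a wider box and hence a possibly larger $M_\cK$, which does not matter for the statement.

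One small tidy-up: the ``$0.99$ after adjusting constants'' is unnecessary. With $\delta=(\log 2)/M_\cK$ you only need $\lambda_0\ge\lambda_c(v)=2a$. Since $s<t\le v$, the strict decrease you have already proved gives $\lambda_c(s)>\lambda_c(v)$. Hence every $\lambda_0\in[\lambda_c(v),\lambda_c(s))$ keeps the characteristic inside $[a,b]$, and you may let $\lambda_0\uparrow\lambda_c(s)$ as written.
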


\begin{proof}
Strict decrease follows from
\eqref{eq:critical-density-positive-finite} and
Theorem~\ref{thm:strict-critical-density}.  It remains to prove continuity.
Fix $t_0>0$.

For right-continuity, take $0<\lambda<f(t_0)$.  By
\eqref{eq:critical-zero-supercritical}, $\theta(\lambda,t_0)=0$.  Choose
$\eta>0$ and a compact rectangle

\[
 \cK=[\lambda/2,\lambda]\times[t_0,t_0+\eta]
 \Subset(0,\infty)^2,
\]

and let $M_{\cK}$ be as in
Theorem~\ref{thm:reverse-characteristic-comparison}.  For all sufficiently
small $\delta>0$, the characteristic stays in $\cK$, and
\eqref{eq:reverse-characteristic-comparison} gives

\[
 \theta(\lambda e^{-M_{\cK}\delta},t_0+\delta)=0.
\]

Hence

\[
 f(t_0+\delta)\ge\lambda e^{-M_{\cK}\delta}.
\]

Let $\delta\downarrow0$ and then $\lambda\uparrow f(t_0)$.  Since $f$
is nonincreasing, this proves right-continuity.

For left-continuity, put

\[
 L:=\lim_{s\uparrow t_0}f(s).
\]

This limit is finite because $f(s)\le f(t_0/2)<\infty$ for
$s\in[t_0/2,t_0)$.  Take $0<\lambda<L$.  Equation
\eqref{eq:critical-zero-supercritical} gives $\theta(\lambda,s)=0$ for all
$s<t_0$ sufficiently close to $t_0$.
Choose $\eta\in(0,t_0)$, form the compact rectangle

\[
 \cK=[\lambda/2,\lambda]\times[t_0-\eta,t_0],
\]

and apply Theorem~\ref{thm:reverse-characteristic-comparison} from $s$ to
$t_0$.  For $s$ sufficiently close to $t_0$,

\[
 f(t_0)\ge\lambda e^{-M_{\cK}(t_0-s)}.
\]

Let $s\uparrow t_0$ and then $\lambda\uparrow L$.  Thus
$f(t_0)\ge L$, while monotonicity gives $f(t_0)\le L$.  Hence $f$ is
left-continuous.

Equations~\eqref{eq:critical-density-comparison} and
\eqref{eq:reverse-characteristic-comparison} also give local quantitative
control.  Fix $t_0>0$ and put $f_0=f(t_0)$.  By continuity, choose a compact
time interval $I_0\Subset(0,\infty)$ about $t_0$ such that

\[
 \frac{3f_0}{4}\le f(r)\le\frac{5f_0}{4},
 \qquad r\in I_0.
\]

Set
$\cK=[f_0/4,3f_0/2]\times I_0$ and form $M_{\cK}$.  Shrink to an interval
$I\subset I_0$ about $t_0$ whose diameter is at most
$(\log 2)/M_{\cK}$.  If $s<t$ lie in $I$, take
$f_0/2<\lambda<f(s)$.  The characteristic
$r\mapsto\lambda e^{-M_{\cK}(r-s)}$ satisfies

\[
 \lambda e^{-M_{\cK}(r-s)}\in[f_0/4,3f_0/2],
 \qquad s\le r\le t,
\]

so the characteristic stays in $\cK$.  Moreover,
\eqref{eq:critical-zero-supercritical} gives $\theta(\lambda,s)=0$, and
Theorem~\ref{thm:reverse-characteristic-comparison} gives

\[
 f(t)\ge \lambda e^{-M_{\cK}(t-s)}.
\]

Letting $\lambda\uparrow f(s)$ and using the upper bound
\eqref{eq:critical-density-comparison} yields

\begin{equation}
 e^{-M_{\cK}(t-s)}\lambda_c(s)
 \le\lambda_c(t)
 \le\frac{1-e^{-s}}{1-e^{-t}}\lambda_c(s).
 \label{eq:critical-curve-two-sided}
\end{equation}

Taking logarithms in \eqref{eq:critical-curve-two-sided} gives
\[
 \int_s^t\frac{\di r}{e^r-1}
 =\log\frac{1-e^{-t}}{1-e^{-s}}
 \le\log\frac{\lambda_c(s)}{\lambda_c(t)}
 \le M_{\cK}(t-s).
\]
The integrand is bounded away from zero on $I$.  Hence
$-\log\lambda_c$ is bi-Lipschitz on a neighbourhood of $t_0$, and therefore
locally bi-Lipschitz on $(0,\infty)$.
\end{proof}

Next we transfer this regularity to the critical-lifespan curve using
generalised inversion.

Recall the critical lifespan

\[
 t_c(\lambda)
 :=\sup\{t\ge0:\theta(\lambda,t)=0\},
\]

and the definition

\[
 \lambda_\infty:=\lim_{t\to\infty}\lambda_c(t)\in[0,\infty).
\]

\begin{proposition}
\label{prop:inverse-critical-continuity}
The map $t_c$ is finite, continuous, strictly decreasing, and locally
bi-Lipschitz on
$(\lambda_\infty,\infty)$, while

\[
 t_c(\lambda)=\infty,
 \qquad 0<\lambda\le\lambda_\infty.
\]

In particular, if $t_c(\lambda)<\infty$ for every $\lambda>0$, then
$\lambda\mapsto t_c(\lambda)$ is continuous, strictly decreasing, and locally
bi-Lipschitz on
$(0,\infty)$.
\end{proposition}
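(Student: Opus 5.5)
The plan is to treat $t_c$ as the generalised inverse of the continuous, strictly decreasing function $f=\lambda_c$ on $(0,\infty)$ given by Theorem~\ref{thm:continuous-critical-density}. The two facts needed are $\lambda_c(t)\to\infty$ as $t\downarrow0$ and $\lambda_c(t)\downarrow\lambda_\infty$ as $t\to\infty$. The first follows from \eqref{eq:critical-density-small-time}; the second is just the definition, together with strict decrease, which gives $\lambda_c(t)>\lambda_\infty$ for every $t$. By continuity and the intermediate value theorem, $\lambda_c$ is then a homeomorphism from $(0,\infty)$ onto $(\lambda_\infty,\infty)$. Write $g$ for its inverse. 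The claim to prove is $t_c(\lambda)=g(\lambda)$ for $\lambda>\lambda_\infty$, and $t_c(\lambda)=\infty$ for $0<\lambda\le\lambda_\infty$.

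For the identification, fix $\lambda>0$ and use \eqref{eq:critical-zero-supercritical} in the $t$-variable.

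\emph{Case $t<g(\lambda)$.} Then $\lambda_c(t)>\lambda$, so $\theta(\lambda,t)=0$.

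\emph{Case $t>g(\lambda)$.} Then $\lambda_c(t)<\lambda$, so $\theta(\lambda,t)>0$.

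Together with the convention $\theta(\lambda,0)=0$ and monotonicity of $\theta$ in $t$, the set $\{t\ge0:\theta(\lambda,t)=0\}$ is an interval containing $[0,g(\lambda))$ and disjoint from $(g(\lambda),\infty)$. Hence its supremum is $g(\lambda)$. Monotonicity of $\theta$ in $t$ holds because the hitting probabilities $h_t$ increase in $t$, combined with Proposition~\ref{prop:frog-one-hit} and Theorem~\ref{thm:local-to-global}. If $\lambda\le\lambda_\infty$, then $\lambda_c(t)>\lambda$ for every $t>0$, so $\theta(\lambda,t)=0$ for all $t$ and $t_c(\lambda)=\infty$.

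Regularity is then inherited. The inverse of a continuous, strictly decreasing bijection between open intervals is continuous and strictly decreasing. For the local bi-Lipschitz property, fix $\lambda_0>\lambda_\infty$ and $t_0=g(\lambda_0)$. Theorem~\ref{thm:continuous-critical-density} gives a neighbourhood $I$ of $t_0$ and constants $0<c\le C$ such that
\[
 c|t-s|\le|\log\lambda_c(s)-\log\lambda_c(t)|\le C|t-s|,
 \qquad s,t\in I.
\]
The image $\lambda_c(I)$ is a neighbourhood of $\lambda_0$ by the homeomorphism property. On it, $\log$ is bi-Lipschitz because $\lambda$ is bounded away from $0$ and $\infty$. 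Inverting the display therefore gives
\[
 C^{-1}|\log\lambda-\log\mu|\le|g(\lambda)-g(\mu)|\le c^{-1}|\log\lambda-\log\mu|,
\]
which shows $g$ is bi-Lipschitz near $\lambda_0$. The final sentence of the proposition follows because finiteness of $t_c$ at every positive density forces $\lambda_\infty=0$.

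The only delicate step is the surjectivity onto $(\lambda_\infty,\infty)$, specifically the blow-up $\lambda_c(t)\to\infty$ as $t\downarrow0$. This is exactly what \eqref{eq:critical-density-small-time} supplies. Everything else is elementary inversion.
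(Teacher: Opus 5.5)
Your proposal is correct and follows essentially the same route as the paper: identify $t_c$ with the inverse of the continuous, strictly decreasing $\lambda_c$ via the sub/supercritical dichotomy, use $\lambda_c(t)\ge 1/t$ for surjectivity onto $(\lambda_\infty,\infty)$, and then inherit continuity, strict decrease and the bi-Lipschitz bounds from the two-sided logarithmic estimate for $\lambda_c$. The differences are minor: the paper proves the bi-Lipschitz bounds on compact subintervals by chaining the local estimates, whereas you invert them pointwise, which is equivalent for a monotone function. Your appeal to monotonicity of $\theta$ in $t$ is harmless but not needed, since the zero set already contains $[0,g(\lambda))$ and misses $(g(\lambda),\infty)$.
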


\begin{proof}
For fixed $\lambda>0$, \eqref{eq:critical-zero-supercritical} gives

\begin{equation}
 \{t>0:f(t)>\lambda\}
 \subseteq
 \{t>0:\theta(\lambda,t)=0\}
 \subseteq
 \{t>0:f(t)\ge\lambda\}.
 \label{eq:critical-inverse-sandwich}
\end{equation}

Since $f$ is strictly decreasing, the difference between the first and third
sets in \eqref{eq:critical-inverse-sandwich}
contains at most the unique time, if any, at which $f(t)=\lambda$.
Moreover, $\{t>0:f(t)>\lambda\}$ is nonempty by
\eqref{eq:critical-density-small-time}.  The three sets therefore have the
same supremum, and

\begin{equation}
 t_c(\lambda)=\sup\{t>0:f(t)>\lambda\}.
 \label{eq:generalized-inverse}
\end{equation}

By \eqref{eq:critical-density-small-time}, $f(t)\to\infty$ as
$t\downarrow0$.  Theorem~\ref{thm:continuous-critical-density} therefore
implies that, for every $\lambda>\lambda_\infty$, there is a unique
$t_\lambda\in(0,\infty)$ such that $f(t_\lambda)=\lambda$.  Equation
\eqref{eq:generalized-inverse} gives $t_c(\lambda)=t_\lambda$.  The inverse
of a continuous strictly decreasing function is continuous and strictly
decreasing.

To obtain the quantitative assertion, fix
$K=[a,b]\Subset(\lambda_\infty,\infty)$ and put
\[
 I:=[t_c(b),t_c(a)]\Subset(0,\infty).
\]
Compactness and a finite chaining of the local estimates in
Theorem~\ref{thm:continuous-critical-density} give constants
$0<c_I\le C_I<\infty$ such that, whenever
$a\le\lambda_1<\lambda_2\le b$ and $t_i:=t_c(\lambda_i)$,
\[
 c_I(t_1-t_2)
 \le\log\frac{\lambda_2}{\lambda_1}
 \le C_I(t_1-t_2).
\]
Indeed, we may cover $I$ by finitely many open intervals on which the two-sided
local estimate holds.  By compactness, choose $\delta>0$ such that every
subinterval of $I$ of length less than $\delta$ is contained in one of these
intervals.
Partition $[t_2,t_1]$ into pieces of length below $\delta$ and sum the
logarithmic estimates over the pieces.
Since
\[
 \frac{\lambda_2-\lambda_1}{b}
 \le\log\frac{\lambda_2}{\lambda_1}
 \le\frac{\lambda_2-\lambda_1}{a},
\]
it follows that
\[
 \frac{\lambda_2-\lambda_1}{bC_I}
 \le t_c(\lambda_1)-t_c(\lambda_2)
 \le\frac{\lambda_2-\lambda_1}{ac_I}.
\]
Thus $t_c$ is bi-Lipschitz on every compact subinterval of
$(\lambda_\infty,\infty)$.

If $0<\lambda\le\lambda_\infty$, strict decrease of $f$ and the definition
of its limit imply $\lambda<f(t)$ for every finite $t$.  Equation
\eqref{eq:generalized-inverse} then gives $t_c(\lambda)=\infty$.
\end{proof}

\subsection{The exact short-lifespan constant}

\begin{proposition}
\label{prop:small-lifespan-limit}
Let $\Delta$ be the degree of $G$, and let $p_c(G)$ be its
bond-percolation threshold.  Then
\begin{equation}
 L_G
 :=\lim_{t\downarrow0}(1-e^{-t})\lambda_c(t)
 =\lim_{t\downarrow0}t\lambda_c(t)
 =\Delta\log\frac1{1-p_c(G)}.
 \label{eq:small-exact-limit}
\end{equation}
For every $t>0$, the corresponding finite-time upper bound is
\begin{equation}
 (1-e^{-t})\lambda_c(t)\le L_G.
 \label{eq:small-exact-upper}
\end{equation}
Moreover,
\[
 \lim_{\lambda\to\infty}\lambda t_c(\lambda)
 =\Delta\log\frac1{1-p_c(G)}.
\]
\end{proposition}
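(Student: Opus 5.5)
The plan is to obtain a matching upper bound for $(1-e^{-t})\lambda_c(t)$ from a first-jump percolation comparison, and a lower bound at short times from a subcritical perturbation argument. The statement about $t_c$ then follows by inversion. Throughout, $0<p_c(G)<1$: the lower bound holds because $G$ has bounded degree, and the upper bound by \cite{DuminilCopinGoswamiRaoufiSeveroYadin2020}. Hence $L^*:=\Delta\log(1/(1-p_c(G)))\in(0,\infty)$. By the remark after \eqref{eq:forward-differential-comparison}, $t\mapsto(1-e^{-t})\lambda_c(t)$ is nonincreasing, so its limit as $t\downarrow0$ exists in $(0,\infty]$. Since $(1-e^{-t})/t\to1$, the two limits in \eqref{eq:small-exact-limit} coincide.

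\emph{Upper bound \eqref{eq:small-exact-upper}.} Fix $t>0$ and $\lambda$ with $1-\exp\{-\lambda(1-e^{-t})/\Delta\}>p_c(G)$. Shrink each range $R_x^{\lambda,t}$ to $x$ together with the first-jump targets of its frogs, i.e.\ the vertices $X^{x,i}_{S_1}$ for those frogs whose first jump time satisfies $S_1\le t$. By Poisson thinning, the directed edges $x\to y$ for $y\sim x$ are then independently open with probability $p=1-\exp\{-\lambda(1-e^{-t})/\Delta\}$, independently over $x$ as well. An exploration argument shows that the out-cluster of $o$ has the law of the Bernoulli($p$) bond cluster of $o$: an edge $\{x,y\}$ is queried only once, from whichever endpoint is explored first, and the reverse direction is never needed. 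Since the true range contains the shrunken one, $\theta(\lambda,t)>0$. Letting $p\downarrow p_c$ gives $(1-e^{-t})\lambda_c(t)\le L^*$ for every $t$, so $L_G\le L^*$ once the limit is identified.

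\emph{Lower bound.} Fix $L<L^*$, let $t$ be small, and set $\lambda=L/t$. By Poisson thinning, split the frogs at each site into two independent families. Type~1 consists of frogs with $N_t=1$; their ranges give independent directed edges with parameter $p_1(t)\le1-e^{-L/\Delta}=:p_*<p_c$. Type~2 consists of frogs with $N_t\ge2$; their number per site is Poisson with mean $\lambda(1-e^{-t}-te^{-t})\le cLt$, and each such frog's range has expected size at most $1+\E[N_t\mid N_t\ge2]\le C$. I would dominate $|\cC_o|$ by a two-stage branching exploration. Each seed vertex grows a type-1 cluster, whose expected size is at most $\chi(p_*)<\infty$ by sharpness of Bernoulli percolation on transitive graphs \cite{Menshikov1986,AizenmanBarsky1987}. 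Each vertex of that cluster emits type-2 frogs, whose ranges produce new seeds with mean at most $CcLt$ per vertex. By independence of the two types and of disjointly revealed sites, this gives $\E|\cC_o|\le\chi(p_*)\sum_k(CcLt\,\chi(p_*))^k<\infty$ for $t$ small. Hence $\theta(L/t,t)=0$, so $t\lambda_c(t)\ge L$, and $\liminf_{t\downarrow0}t\lambda_c(t)\ge L^*$.

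The main obstacle is making this domination rigorous. Revealed type-1 clusters condition on closed edges, and a type-2 range may land in an already-explored region. I would handle this by a sequential exploration in which each site's type-1 and type-2 data are revealed at most once; monotonicity then only helps, since reusing revealed vertices does not increase the count. Alternatively, one could bound the expected number of self-avoiding alternating chains by a union bound over chains, using BK-type independence of type-1 connections along disjoint witnesses.

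\emph{Critical lifespan.} By Proposition~\ref{prop:inverse-critical-continuity}, for $\lambda>\lambda_\infty$ we have $\lambda_c(t_c(\lambda))=\lambda$. Moreover $t_c(\lambda)\to0$ as $\lambda\to\infty$: $t_c$ is decreasing, and it cannot have a positive limit $\tau$, because then $\lambda=\lambda_c(t_c(\lambda))\le\lambda_c(\tau)<\infty$ for all $\lambda$. Therefore $\lambda t_c(\lambda)=t\lambda_c(t)|_{t=t_c(\lambda)}\to L_G$.
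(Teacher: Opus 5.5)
Your proposal is correct and follows essentially the paper's proof. The upper bound uses the same first-jump reduction to directed, hence Bernoulli bond, percolation. The lower bound uses the same split into frogs with $N_t=1$ and $N_t\ge2$, combined with finite subcritical susceptibility and a geometric series. Your second option, a union bound over alternating chains using the van den Berg--Kesten inequality on the directed $N_t=1$ edge field, is exactly how the paper makes the domination rigorous. The inversion argument for $t_c$ is also the same. One small point: for finite susceptibility on an arbitrary transitive graph, cite \cite{DuminilCopinTassion2016} as the paper does, since \cite{Menshikov1986,AizenmanBarsky1987} are formulated for lattices in $\mathbb{Z}^d$.
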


\begin{proof}
Put
\[
 c_*:=\Delta\log\frac1{1-p_c(G)},
 \qquad
 p(c):=1-e^{-c/\Delta}.
\]
The superlinear-growth assumption gives $p_c(G)<1$, and hence
$c_*<\infty$.

\emph{Directed first-jump percolation.}
We first recall that independent directed nearest-neighbour edges, each
present with probability $p$, have a root out-cluster with the same law as
the root cluster of Bernoulli bond percolation at parameter $p$.  To see
this, explore from the root by querying edges from reached to unreached
vertices in a deterministic breadth-first order.  At most one orientation
of each unordered edge is ever queried.  If its first query succeeds,
both endpoints become reached.  If it fails and the other endpoint is
later reached by a different route, the reverse orientation leads to an
already reached vertex and need not be queried.  Each new query therefore
uses an independent Bernoulli variable, just as in ordinary bond
exploration.  Exhaustion extends the identity to the infinite graph.
This is the exploration argument of
\cite[Lemma~3.5]{BaeumlerJahnelKoepplLodewijksReevesTobias2026},
which applies without change to the present graph.

Fix $c>c_*$ and $t>0$, and take
$\lambda=c/(1-e^{-t})$.  For every oriented nearest-neighbour edge
$x\to y$, count the frogs born at $x$ whose first jump occurs by time
$t$ and lands at $y$.  Poisson marking makes these counts independent
over all oriented edges, each with mean
\[
 \frac{\lambda(1-e^{-t})}{\Delta}=\frac c\Delta.
\]
Their positivity indicators are therefore independent directed edges
with parameter $p(c)>p_c(G)$.  Each such edge belongs to the frog
activation relation.  The preceding cluster-law identity gives survival
with positive probability, and consequently
$(1-e^{-t})\lambda_c(t)\le c$.  Letting $c\downarrow c_*$ gives
the upper bound in \eqref{eq:small-exact-upper} with $c_*$ on the right.
It remains to prove that the short-lifespan limit equals $c_*$.

\emph{A subcritical decomposition.}
Now fix $0<c<c_*$ and again take
$\lambda=c/(1-e^{-t})$.  Let $N\sim\Poi(t)$ denote the jump count
of a single frog.  At every source, split the marked Poisson population
into frogs with $N=1$ and frogs with $N\ge2$.  These populations are
independent of one another, and the populations at distinct sources are
independent.  Frogs with $N=0$ have no effect on activation.

The $N=1$ population generates independent directed nearest-neighbour
edges with parameter
\[
 p_1(t)
 =1-\exp\left\{-\frac{\lambda te^{-t}}{\Delta}\right\}
 =1-\exp\left\{-\frac{ct}{\Delta(e^t-1)}\right\}
 \le p(c)<p_c(G).
\]
Let $\tau_t(x,y)$ be its directed connection probability, with
$\tau_t(x,x)=1$.  Write $\E_p^{\mathrm{bond}}$ for expectation under
Bernoulli bond percolation of parameter $p$, and let
$\cC_o^{\mathrm{bond}}$ be its root cluster.  The directed/bond
cluster-law identity, transitivity, and monotonicity give
\begin{equation}
 \sup_{x\in V}\sum_{y\in V}\tau_t(x,y)\le\chi,
 \qquad
 \chi:=\E_{p(c)}^{\mathrm{bond}}
       [|\cC_o^{\mathrm{bond}}|]<\infty.
 \label{eq:small-exact-base-susceptibility}
\end{equation}
The finiteness assertion is subcritical Bernoulli sharpness on
transitive graphs
\cite{DuminilCopinTassion2016}.

For $y\ne x$, put
\[
 K_t(x,y)
 :=\lambda\P_x\bigl(N\ge2,\
       y\in\{X_s:0\le s\le t\}\bigr),
 \qquad K_t(x,x):=0,
\]
where $\P_x$ is the law of an independent test walk.  Poisson marking
shows that the probability that the $N\ge2$ population supplies a
macro-edge $x\to y$ is $1-e^{-K_t(x,y)}\le K_t(x,y)$.  Such
events can be dependent when they share a source; we use independence
only between distinct sources.  Since a walk visits at most $N$
vertices other than its starting point,
\begin{equation}
 \sup_{x\in V}\sum_{y\in V}K_t(x,y)
 \le\lambda\E[N\1_{\{N\ge2\}}]
 =\lambda t(1-e^{-t})
 =ct.
 \label{eq:small-exact-extra-row-sum}
\end{equation}

\emph{A connection expansion.}
Every connection from $x$ to $y$ in the full activation relation has
a finite simple directed witnessing path.  Label each of its edges
by one population that supplies it.  If the path uses $n\ge1$
macro-edges from the $N\ge2$ population, list them in order as
$u_i\to v_i$, $1\le i\le n$.  They are joined by paths in the
$N=1$ edge field, from $x$ to $u_1$, from $v_i$ to $u_{i+1}$,
and from $v_n$ to $y$.  Some joining paths may have length zero.
The case $n=0$ is a connection in the $N=1$ field alone.

Because the full witnessing path is simple, the sources
$u_1,\ldots,u_n$ are distinct and the joining paths have disjoint
directed-edge witnesses.  Fix an endpoint tuple with distinct
sources.  Independence across these sources bounds the probability
of the required $N\ge2$ macro-edges by
$\prod_{i=1}^nK_t(u_i,v_i)$.  This population is independent of the
$N=1$ field.  In the latter field, the van den Berg--Kesten
inequality for increasing events
\cite{VanDenBergKesten1985} bounds the probability of the
disjoint joining connections by
\[
 \tau_t(x,u_1)
 \left(\prod_{i=1}^{n-1}\tau_t(v_i,u_{i+1})\right)
 \tau_t(v_n,y).
\]
Here the independent Bernoulli coordinates are the oriented edges.
The inequality can first be applied with all joining paths restricted
to a finite vertex set, and then extended by exhaustion, since their
disjoint witnesses are finite.

Take a union bound over $n$ and the endpoint tuples with distinct
sources.  After applying the preceding probability bounds, remove
the distinctness restriction from the sum of nonnegative products.
Thus, for $n\ge1$, the contribution is at most
\[
 \sum_{u_1,v_1,\ldots,u_n,v_n\in V}
 \tau_t(x,u_1)
 \left(\prod_{i=1}^nK_t(u_i,v_i)\right)
 \left(\prod_{i=1}^{n-1}\tau_t(v_i,u_{i+1})\right)
 \tau_t(v_n,y).
\]
For $n=0$ the bound is $\tau_t(x,y)$.  Summing over $y$ and
successively using
\eqref{eq:small-exact-base-susceptibility} and
\eqref{eq:small-exact-extra-row-sum} now gives, by Tonelli's theorem,
\begin{equation}
 \E_{\lambda,t}[|\cC_x|]
 \le\sum_{n\ge0}\chi^{n+1}(ct)^n
 =\frac{\chi}{1-ct\chi},
 \qquad ct\chi<1.
 \label{eq:small-exact-full-susceptibility}
\end{equation}
In particular, the frog model becomes extinct at density
$c/(1-e^{-t})$ for all sufficiently small $t$.  Hence
\[
 \liminf_{t\downarrow0}(1-e^{-t})\lambda_c(t)\ge c.
\]
Let $c\uparrow c_*$ and use \eqref{eq:small-exact-upper}.
This proves convergence to $c_*$.  Since
$t/(1-e^{-t})\to1$, the two limits in
\eqref{eq:small-exact-limit} coincide.

Finally, Proposition~\ref{prop:inverse-critical-continuity} gives
$\lambda_c(t_c(\lambda))=\lambda$ for
$\lambda>\lambda_\infty$.  Moreover,
$t_c(\lambda)\downarrow0$ as $\lambda\to\infty$; otherwise
monotonicity of $\lambda_c$ and its finiteness at every positive
time would bound $\lambda_c(t_c(\lambda))$ uniformly.  Therefore
\[
 \lambda t_c(\lambda)
 =t_c(\lambda)\lambda_c(t_c(\lambda))
 \longrightarrow c_*.
\]
\end{proof}

\subsection{The endpoint assertion}

It remains to identify when the inverse curve is finite on all of
$(0,\infty)$, which determines the exact status of Conjecture~1.

\begin{corollary}
\label{cor:conjecture-1}
Let $G$ be a transitive graph of superlinear growth.  If $G$ is
nonamenable, or if $G$ has polynomial growth, then both critical curves in
Conjecture~1 are continuous and strictly decreasing
on $(0,\infty)$.
\end{corollary}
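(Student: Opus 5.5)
The plan is to combine Theorem~\ref{thm:continuous-critical-density} and Proposition~\ref{prop:inverse-critical-continuity} with the endpoint input $\lambda_\infty=0$ from \cite{AngelDeLaRivaHermonShi2026}. For $\lambda_c$ there is nothing left to prove. Theorem~\ref{thm:continuous-critical-density} already gives continuity and strict decrease on $(0,\infty)$ for every transitive graph of superlinear growth, so the remaining work concerns only $t_c$.

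For $t_c$, Proposition~\ref{prop:inverse-critical-continuity} reduces the claim to showing that $t_c(\lambda)<\infty$ for every $\lambda>0$. By that proposition this is equivalent to $\lambda_\infty=\lim_{t\to\infty}\lambda_c(t)=0$. Indeed, $t_c=\infty$ exactly on $(0,\lambda_\infty]$, and $t_c$ is finite on $(\lambda_\infty,\infty)$.

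The only input needed is therefore $\lambda_\infty=0$. As recalled in the introduction, \cite{AngelDeLaRivaHermonShi2026} proves this for nonamenable transitive graphs and for transitive graphs of superlinear polynomial growth. I would invoke these results in the form ``for every $\lambda>0$ there is $t<\infty$ with $\theta(\lambda,t)>0$''. This gives $\lambda_c(t)\le\lambda$ for that $t$, and since $\lambda$ is arbitrary, $\lambda_\infty=0$. Here ``polynomial growth'' must be read together with the standing superlinear-growth hypothesis. Polynomial growth of degree one is linear growth, which the hypothesis excludes, so the polynomial-growth case of \cite{AngelDeLaRivaHermonShi2026} applies.

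The last step is to plug $\lambda_\infty=0$ into the final sentence of Proposition~\ref{prop:inverse-critical-continuity}, which yields continuity and strict decrease of $t_c$ on $(0,\infty)$. The only delicate point is matching hypotheses. I need to check that the cited theorems use the same rate-one continuous-time simple random walk, Poisson$(\lambda)$ frogs and deterministic lifespan $t$ as here, and that their conclusion is stated in a form equivalent to $\lambda_\infty=0$ rather than some variant such as survival of immortal frogs. If the source instead gives $\lambda_c(t)\to0$ directly, the conclusion is immediate by the same reduction.
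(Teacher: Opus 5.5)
Your proposal is correct and matches the paper's proof: cite \cite[Theorem~1.4(3)]{AngelDeLaRivaHermonShi2026} for nonamenable graphs and \cite[Theorem~1.5(2)]{AngelDeLaRivaHermonShi2026} for superlinear polynomial growth to get $t_c(\lambda)<\infty$ for all $\lambda>0$, hence $\lambda_\infty=0$. Then apply Theorem~\ref{thm:continuous-critical-density} and Proposition~\ref{prop:inverse-critical-continuity}; your explicit derivation of $\lambda_\infty=0$ from finiteness of $t_c$ is a correct expansion of the step the paper states in one line.
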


\begin{proof}
For nonamenable graphs, finiteness of $t_c(\lambda)$ for every
$\lambda>0$ is \cite[Theorem~1.4(3)]{AngelDeLaRivaHermonShi2026}.  For
transitive graphs of superlinear polynomial growth it is
\cite[Theorem~1.5(2)]{AngelDeLaRivaHermonShi2026}.  Thus
$\lambda_\infty=0$ in either case.  Apply
Theorem~\ref{thm:continuous-critical-density} and
Proposition~\ref{prop:inverse-critical-continuity}.
\end{proof}

On an arbitrary transitive graph of superlinear growth,
Theorem~\ref{thm:continuous-critical-density} applies without an endpoint
assumption.  Proposition~\ref{prop:inverse-critical-continuity} shows that
the full assertion for $t_c$ is equivalent to

\begin{equation}
 \lambda_\infty=0.
 \label{eq:lambda-infinity-boundary}
\end{equation}

Condition~\eqref{eq:lambda-infinity-boundary} is the finiteness assertion
$t_c(\lambda)<\infty$ in
\cite[Conjecture~1.1]{AngelDeLaRivaHermonShi2026}.  The comparison arguments above separate this endpoint assertion from
regularity of the finite phase boundary.

A reverse counterpart to \eqref{eq:poisson-clock-contraction} with a
constant uniform in $n$ is impossible.  For $0<s<t$,

\[
 \frac{\P(N_t\ge n)}{\P(N_s\ge n)}
 \sim e^{-(t-s)}\left(\frac ts\right)^n
 \longrightarrow\infty,
\]

because $\P(N_u\ge n)\sim\P(N_u=n)$ for fixed $u>0$.  The pointwise two-step
estimate in Lemma~\ref{lem:bridge-exit-bound} provides a different mechanism
for the kernels considered in Section~\ref{sec:bridge-kernels}.

\section{The two-step condition and long-range extensions}
\label{sec:bridge-kernels}

Theorem~\ref{thm:local-to-global} and
Proposition~\ref{prop:density-lifespan-tradeoff} hold for every rate-one
kernel $Q$.  Proposition~\ref{prop:russo-all-kernels} extends the Russo
formulas to such kernels, and Lemma~\ref{lem:bridge-exit-bound} bounds the
exit-weighted jump count under \eqref{eq:bridge-condition}.

We begin by verifying that the Russo decomposition itself does not use
nearest-neighbour geometry.

Recall, for finite $\Lambda\ni o$, the exit probability

\begin{equation}
 f_\Lambda^Q(\lambda,t)
 :=\P_{\lambda,t,Q}
 \bigl(o\rightsquigarrow_\Lambda\Lambda^c\bigr).
 \label{eq:finite-volume-exit-Q}
\end{equation}

Define $L_\Lambda^Q$, $a_{\lambda,t,Q}^S$, $h_{t,Q}^S$, and $q_{t,Q}^S$
by \eqref{eq:nonexit-set}, \eqref{eq:internal-activation-weight}, and
\eqref{eq:exit-probability-density}, with the simple random walk replaced by
the rate-one $Q$-walk.

\begin{proposition}
\label{prop:russo-all-kernels}
For every Markov kernel $Q$ on $V$ and every $\lambda,t>0$,

\begin{align}
 \partial_\lambda f_\Lambda^Q(\lambda,t)
 &=\sum_{\substack{S\subseteq\Lambda\\o\in S}}
   \P_{\lambda,t,Q}(L_\Lambda^Q=S)
   \sum_{x\in S}a_{\lambda,t,Q}^S(x)h_{t,Q}^S(x),
 \label{eq:russo-lambda-Q}\\
 \partial_t f_\Lambda^Q(\lambda,t)
 &=\lambda\sum_{\substack{S\subseteq\Lambda\\o\in S}}
   \P_{\lambda,t,Q}(L_\Lambda^Q=S)
   \sum_{x\in S}a_{\lambda,t,Q}^S(x)q_{t,Q}^S(x).
 \label{eq:russo-time-Q}
\end{align}
\end{proposition}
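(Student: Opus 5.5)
The plan is to write $f_\Lambda^Q$ as an explicit finite mixture of Poisson expressions in $(\lambda,t)$ and differentiate it directly.

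\textbf{Step 1: reduce to Poisson variables.} The event $\{o\rightsquigarrow_\Lambda\Lambda^c\}$ depends only on the ranges $R_y$, $y\in\Lambda$. Each $R_y$ is determined by the Poisson population at $y$, i.e.\ by a Poisson point process of walk trajectories with intensity $\lambda\P_y^Q$. Truncated at time $t$, these become traces. Classify each trace at $y$ according to the set $T\subseteq\Lambda$ of vertices of $\Lambda$ it visits and whether it exits $\Lambda$ by time $t$. There are finitely many such classes. By Poisson thinning, the numbers of frogs in the classes are independent Poisson variables, with means $\lambda\,\pi_{y}(T,\mathrm{exit};t)$, and the event depends only on which classes are nonempty. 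Hence $f_\Lambda^Q$ is a finite polynomial in the quantities $1-e^{-\lambda\pi}$. The functions $t\mapsto\pi$ are $C^1$ on $(0,\infty)$ by the Gamma-mixture domination argument of Lemma~\ref{lem:exit-density}, which used only independence of the clock and the embedded chain. This gives the $C^1$ regularity.

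\textbf{Step 2: $\lambda$-derivative via pivotal decomposition.} Add an infinitesimal extra frog at each $y\in\Lambda$. Since $\partial_\lambda$ of a Poisson-driven increasing event is $\sum_y\E[\text{increment from one added frog at }y]$, we get
\[
\partial_\lambda f_\Lambda^Q=\sum_{y\in\Lambda}\P_{\lambda,t,Q}\bigl(o\not\rightsquigarrow_\Lambda\Lambda^c,\ \text{an extra frog at $y$ creates exit}\bigr).
\]
Decompose according to $L=L_\Lambda^Q=S$, on the event that $o$ does not exit, so $o\in S$. The key structural claim is a conditioning/exploration identity: on $\{L=S\}$, the activated set from $o$ lies in $S$ and is generated only by frogs whose traces stay in $S$. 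Moreover, the event $\{L=S\}$ is measurable with respect to frogs at vertices of $S$ whose traces leave $S$ and frogs at $\Lambda\setminus S$. These are independent of the retained frogs by thinning. Therefore $\P(L=S,\ o\text{ activates }x)=\P(L=S)\,a^S(x)$. The added frog at $x$ produces an exit precisely when its trace leaves $S$: every vertex outside $S$ in $\Lambda$ reaches $\Lambda^c$. This has probability $h^S_{t,Q}(x)$, giving \eqref{eq:russo-lambda-Q}.

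\textbf{Step 3: $t$-derivative.} Increasing $t$ to $t+\mathrm dt$ extends every walk. By Step 1, only trace-class probabilities change. The first-order change relevant on $\{L=S\}$ is that a frog at an activated $x\in S$ whose trace stayed in $S$ up to time $t$ first leaves $S$ during $(t,t+\mathrm dt]$. The expected number of such frogs is $\lambda q^S_{t,Q}(x)\,\mathrm dt$. Other changes either occur on the event where exit already happens, or concern frogs already outside the retained class. Those frogs either already exit or are at non-activated sources, so they do not affect the event to first order. Repeating the independence argument yields \eqref{eq:russo-time-Q}.

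\textbf{Main obstacle.} The delicate point is the independence factorization $\P(L=S,\ o\to x\text{ internally})=\P(L=S)\,a^S(x)$ for arbitrary, possibly long-range $Q$. One must check that $L$ is determined without the retained frogs of $S$. I would argue as follows: $z\in\Lambda\setminus S$ reaches $\Lambda^c$, while $S$ is closed in the sense that every frog at $S$ with trace leaving $S$ necessarily hits $\Lambda^c\cup(\Lambda\setminus S)$. Hence such frogs are exactly those that would certify exit, and $\{L=S\}$ equals the event that all frogs at $S$ stay in $S$, intersected with an event about $\Lambda\setminus S$. Then the retained-frog field, the frogs at $S$, and the rest are independent by Poisson thinning. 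The justification of term-by-term differentiation is routine via the domination \eqref{eq:gamma-mixture-domination}.
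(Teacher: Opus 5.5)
Your density derivative is the paper's argument. You decompose the non-exit event over $\{L_\Lambda^Q=S\}$ and write it as ``all traces born in $S$ stay in $S$'' intersected with an event about $\Lambda\setminus S$. The paper makes the latter precise as: every $z\in\Lambda\setminus S$ reaches $\Lambda^c$ using only clouds based in $\Lambda\setminus S$. This is legitimate because on the staying event no exit path can enter $S$. You then separate the retained frogs by marking and apply the Poisson Russo formula, exactly as the paper does. Your trace classes (visited subset of $\Lambda$ plus an exit flag) differ from the paper's trace collapsed at first exit, but serve the same purpose. Writing each class probability as a Poisson mixture $\sum_k c_k\P(N_t=k)$ with $c_k\in[0,1]$ gives regularity more simply than the paper's finite-state chain on (current vertex, visited set).

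The lifespan derivative is where you depart from the paper, and as written it is only heuristic. The paper gives the frogs at each site $x$ their own lifespan $t_x$. Extending only the walks born at $x$ yields an exact finite increment, and the diagonal derivative follows by the chain rule from $C^1$ regularity in $(t_x)_{x\in\Lambda}$. Extending all walks at once, as you do, needs two points you leave implicit.

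First, the frogs at $x$ that first leave $S$ during $(t,t+\varepsilon]$ are themselves retained at time $t$. So ``repeating the independence argument'' does not decouple them from the event that $o$ internally activates $x$. You need that this activation has a simple witness ending at $x$, and hence does not use the cloud at $x$; the paper states this explicitly.

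Second, ``to first order'' hides cascades, e.g.\ a frog at an activated $x_1$ newly reaching $x_2\in S$ while a frog at $x_2$ newly leaves $S$. These must be shown to be $O(\varepsilon^2)$, for instance by bounding the probability of two or more clock rings among the $\Poi(\lambda|\Lambda|)$ frogs during $(t,t+\varepsilon]$.

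Both points are easy, but your Step 1 already contains a cleaner route. With class intensities $\mu_c=\lambda\pi_c(t)$, the chain rule gives $\partial_t f_\Lambda^Q=\lambda\sum_c\pi_c'(t)\,\partial_{\mu_c}f_\Lambda^Q$. Here $\partial_{\mu_c}f_\Lambda^Q$ is your Step 2 pivotal probability for one fresh frog of class $c$ at $x$: the sum of $\P_{\lambda,t,Q}(L_\Lambda^Q=S)\,a_{\lambda,t,Q}^S(x)$ over those $S\ni o,x$ that class $c$ leaves. The classes at $x$ leaving $S$ have total probability $h_{t,Q}^S(x)$, whose derivative is $q_{t,Q}^S(x)$. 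The time formula therefore follows with neither an $\varepsilon$-expansion nor the simple-path observation, because the added frog is independent of everything else.
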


\begin{proof}
Identify all of $\Lambda^c$ with one symbol $\partial$.  For a one-particle
$Q$-walk started at $x\in\Lambda$, define its time-$t$ trace stopped and
collapsed at the first exit from $\Lambda$ by
\[
 T_{x,t}^{\Lambda,Q}:=
 \begin{cases}
  \{X_s:0\le s\le t\},&\tau_{\Lambda^c}>t,\\
  \{X_s:0\le s<\tau_{\Lambda^c}\}\cup\{\partial\},
    &\tau_{\Lambda^c}\le t.
 \end{cases}
\]
For $B\subseteq\Lambda\cup\{\partial\}$, let $Y_{x,B}$ count the frogs at
$x$ whose collapsed trace equals $B$.  Poisson thinning makes the finite
family $(Y_{x,B})$ independent, with
\[
 Y_{x,B}\sim
 \Poi\!\left(\lambda\P_x^Q(T_{x,t}^{\Lambda,Q}=B)\right).
\]

For $S\subseteq\Lambda$ with $o\in S$, let $\mathsf{Stay}(S)$ be the event
that every time-$t$ trace born in $S$ stays in $S$, and let
\[
 \mathsf{Exit}(S)
 :=\bigcap_{y\in\Lambda\setminus S}
   \{y\text{ reaches }\Lambda^c
     \text{ using only clouds based in }\Lambda\setminus S\}.
\]
Then
\begin{equation}
 \{L_\Lambda^Q=S\}
 =\mathsf{Stay}(S)\cap\mathsf{Exit}(S).
 \label{eq:nonexit-factorization-Q}
\end{equation}
Indeed, on $\{L_\Lambda^Q=S\}$ a trace born in $S$ cannot hit
$\Lambda^c$, nor can it hit $z\in\Lambda\setminus S$, since such a $z$
reaches $\Lambda^c$.  Any path witnessing that a vertex of
$\Lambda\setminus S$ reaches $\Lambda^c$ must avoid $S$, since otherwise
its suffix would make a vertex of $S$ reach $\Lambda^c$.  This proves one
inclusion in \eqref{eq:nonexit-factorization-Q}.  Conversely, under
$\mathsf{Stay}(S)$ no path from $S$ can leave $S$, while
$\mathsf{Exit}(S)$ makes every vertex of $\Lambda\setminus S$ reach
$\Lambda^c$.

At sources in $S$, marked Poisson thinning separates traces that stay in
$S$ from traces that leave $S$.  Hence the internal activation event
defining $a_{\lambda,t,Q}^S(x)$ depends only on staying traces,
$\mathsf{Stay}(S)$ depends only on exiting-trace counts, and
$\mathsf{Exit}(S)$ depends only on clouds based in $\Lambda\setminus S$.
These three objects are independent.  The Poisson Russo formula now
differentiates \eqref{eq:finite-volume-exit-Q} by adding one trace of type
$(x,B)$.  Conditional on $L_\Lambda^Q=S$, it is pivotal exactly when its
source $x\in S$ is activated internally and the added trace exits $S$.
Summing over $B$ gives \eqref{eq:russo-lambda-Q}.  This is the argument of
\cite[Equations~(3.3)--(3.6)]{AngelDeLaRivaHermonShi2026}; no property of the
embedded chain was used.

For the time derivative, give all frogs born at $x$ a lifespan $t_x$, write
$\mathbf t=(t_x)_{x\in\Lambda}$, and denote the corresponding finite-volume
probability by $F_\Lambda^Q(\mathbf t)$.  On
$\{L_\Lambda^Q=S\}$, increasing $t_x$ from $t$ to
$t+\varepsilon$ is pivotal exactly when $x$ is internally activated and at
least one frog born at $x$ first exits $S$ during
$(t,t+\varepsilon]$.  A simple activation path ending at $x$ uses no frog
born at $x$.  Marking the Poisson cloud at $x$ according to whether a frog
exits by time $t$, first exits during $(t,t+\varepsilon]$, or stays through
$t+\varepsilon$, and using \eqref{eq:nonexit-factorization-Q}, gives the
independent factor
\[
 1-\exp\{-\lambda\P_x^Q(\tau_{S^c}\in(t,t+\varepsilon])\}.
\]

More explicitly, let $\mathbf t=(t,\ldots,t)$ and let $\mathbf e_x$ be
the coordinate vector at $x$.  The finite increment is
\begin{equation}
\begin{aligned}
 &F_\Lambda^Q(\mathbf t+\varepsilon\mathbf e_x)
       -F_\Lambda^Q(\mathbf t)\\
 &\quad=
 \sum_{\substack{S\subseteq\Lambda\\o,x\in S}}
 \P_{\lambda,t,Q}(L_\Lambda^Q=S)
 a_{\lambda,t,Q}^S(x)
 \left(1-\exp\left\{-\lambda\P_x^Q
       \bigl(\tau_{S^c}\in(t,t+\varepsilon]\bigr)\right\}\right).
\end{aligned}
\label{eq:russo-time-finite-increment-Q}
\end{equation}
All configuration probabilities on the right are evaluated at the
uniform lifespan $t$.

After division by $\varepsilon$ and letting $\varepsilon\downarrow0$, this
gives
\[
 \partial_{t_x}F_\Lambda^Q(t,\ldots,t)
 =
 \lambda\sum_{\substack{S\subseteq\Lambda\\o,x\in S}}
 \P_{\lambda,t,Q}(L_\Lambda^Q=S)
 a_{\lambda,t,Q}^S(x)q_{t,Q}^S(x).
\]
The first-exit distribution is a mixture of
$\operatorname{Gamma}(k,1)$ laws because the rate-one clock is independent
of the embedded $Q$-chain.

To justify the diagonal chain rule, fix
$J=[a,b]\Subset(0,\infty)$.  Before absorption at $\partial$, the pair
consisting of the current vertex and the visited subset of $\Lambda$ is a
finite-state continuous-time Markov chain.  Thus every collapsed-trace
probability is continuously differentiable in time.  The Gamma-series
domination \eqref{eq:gamma-mixture-domination} makes the first-exit
derivatives locally uniform on $J$.  Finite-volume reachability depends only
on whether each of the finitely many variables $Y_{x,B}$ vanishes, so
$F_\Lambda^Q$ is $C^1$ on $J^\Lambda$.
Therefore
\[
 \frac{\mathrm d}{\mathrm dt}F_\Lambda^Q(t,\ldots,t)
 =\sum_{x\in\Lambda}
   \partial_{t_x}F_\Lambda^Q(t,\ldots,t),
\]
and summing the coordinate derivatives gives
\eqref{eq:russo-time-Q}.
\end{proof}

Next we prove the one-big-jump estimate that replaces the simple-random-walk
weighted bound.

Assume that, for some $\kappa\in[0,\infty)$,

\begin{equation}
 Q^2(x,y)\le\kappa Q(x,y),
 \qquad x\ne y.
 \label{eq:bridge-condition}
\end{equation}

We call \eqref{eq:bridge-condition} the \emph{two-step condition}.
It is an off-diagonal sub-convolution bound; in particular, a two-step route
to a different vertex cannot be much more likely than one direct jump.

\begin{lemma}
\label{lem:bridge-exit-bound}
Suppose \eqref{eq:bridge-condition} holds and put $r:=1+\kappa$.  For every
$t>0$, $x\in V$, and $A\subseteq V\setminus\{x\}$,

\begin{equation}
 \E_x^Q\bigl[N_t\1_{\{\tau_A\le t\}}\bigr]
 \le B_\kappa(t)\P_x^Q(\tau_A\le t),
 \qquad
 B_\kappa(t):=
 \frac{t(1+rt)e^{(r-1)t}}{1-e^{-t}}.
 \label{eq:bridge-exit-bound}
\end{equation}

The function $B_\kappa$ is finite and continuous on $(0,\infty)$ and extends
continuously to $B_\kappa(0)=1$.
\end{lemma}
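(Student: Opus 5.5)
The plan is to compare everything with the one-jump hitting probability $Q(x,A):=\sum_{a\in A}Q(x,a)$. The lower bound is immediate: the walk hits $A$ by time $t$ if its clock rings at least once and the first update lands in $A$. Hence
\[
 \P_x^Q(\tau_A\le t)\ge(1-e^{-t})\,Q(x,A).
\]
It therefore suffices to show
\[
 \E_x^Q\bigl[N_t\1_{\{\tau_A\le t\}}\bigr]\le t(1+rt)e^{(r-1)t}\,Q(x,A).
\]
If $Q(x,A)=0$, the argument below will give $\P_x(H_A<\infty)=0$, so both sides vanish.

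\emph{Iterating the two-step condition.} The key deterministic step is a bound $Q^k(x,a)\le c_kQ(x,a)$ for $a\ne x$, with $c_1=1$. I would prove it by induction, writing $Q^{k}(x,a)=\sum_z Q^{k-1}(x,z)Q(z,a)$ and splitting the sum over $z$ into three parts.
\begin{itemize}
\item The term $z=x$ is at most $Q^{k-1}(x,x)Q(x,a)\le Q(x,a)$.
\item The term $z=a$ is at most $Q^{k-1}(x,a)\le c_{k-1}Q(x,a)$.
\item The terms $z\notin\{x,a\}$ are at most $c_{k-1}\sum_zQ(x,z)Q(z,a)\le c_{k-1}Q^2(x,a)\le c_{k-1}\kappa Q(x,a)$, using the induction hypothesis and then \eqref{eq:bridge-condition}.
\end{itemize}
Thus $c_k\le 1+rc_{k-1}$, so $c_k\le (r^k-1)/(r-1)\le k r^{k-1}$. (For $\kappa=0$, i.e.\ $r=1$, read this as $c_k\le k$.) Consequently $\P_x(H_A=k)\le\P_x(Y_k\in A)\le k r^{k-1}Q(x,A)$, using $x\notin A$.

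\emph{Summing against the clock.} As in Lemma~\ref{lem:exit-density}, independence of the clock and the embedded chain gives $\{\tau_A\le t\}=\{H_A\le N_t\}$. Therefore
\[
 \E_x\bigl[N_t\1_{\{\tau_A\le t\}}\bigr]
 =\sum_{k\ge1}\P_x(H_A=k)\,\E\bigl[N_t\1_{\{N_t\ge k\}}\bigr]
 \le Q(x,A)\,\E\Bigl[N_t\sum_{k=1}^{N_t}k r^{k-1}\Bigr]
 \le Q(x,A)\,\E\bigl[N_t^2r^{N_t-1}\bigr].
\]
The Poisson factorial-moment generating identities
\[
 \E[N r^{N-1}]=te^{(r-1)t},
 \qquad
 \E[N(N-1)r^{N-2}]=t^2e^{(r-1)t}
\]
give $\E[N_t^2r^{N_t-1}]=te^{(r-1)t}(1+rt)$. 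Dividing by the lower bound from the first step yields \eqref{eq:bridge-exit-bound} for finite $A$. The bound extends to infinite $A$ because every step used series that remain valid for arbitrary $A$; alternatively, use monotone convergence along finite $A_n\uparrow A$.

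Finiteness and continuity of $B_\kappa$ on $(0,\infty)$ are clear, and $B_\kappa(t)\to1$ as $t\downarrow0$ because $t/(1-e^{-t})\to1$.

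The main obstacle is the induction in the second step. The two-step condition controls only off-diagonal two-step mass, so the diagonal contributions (returns to $x$, holding at $a$) must be split off by hand. One must check that they cost only the additive $1$ and the extra factor in $r=1+\kappa$ without breaking the recursion. Everything afterwards is an explicit Poisson computation.
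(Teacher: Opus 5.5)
\textbf{There is a genuine gap at the end of your summation step.} The final inequality
\[
 \E\Bigl[N_t\sum_{k=1}^{N_t}kr^{k-1}\Bigr]\le\E\bigl[N_t^2r^{N_t-1}\bigr]
\]
would need $\sum_{k=1}^{N}kr^{k-1}\le Nr^{N-1}$. For every $N\ge2$ and $r\ge1$ this fails: the term $k=N$ alone equals $Nr^{N-1}$, and the term $k=1$ adds at least $1$ more. Since $\P(N_t\ge2)>0$, the inequality is strictly false. For $\kappa=0$ it compares $\E[N_t^2(N_t+1)/2]$ with $\E[N_t^2]$.

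What your estimates actually prove is the bound with a third-moment constant such as $\E[N_t^2(N_t+1)r^{N_t-1}]/(2(1-e^{-t}))$. That constant is still finite, continuous, and tends to $1$ as $t\downarrow0$, so it would suffice qualitatively for the reverse comparison. It is not the stated $B_\kappa(t)$, however, and the explicit form of $B_\kappa$ is used later, for instance in the exponent $1/(2B_\kappa(t))$ of Corollary~\ref{cor:bridge-sharpness}.

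\textbf{The loss comes from your induction.} Your remark that the holding term at $a$ must be split off by hand is the culprit. The term $z=a$ is not diagonal for the induction: since $a\ne x$, the hypothesis applies to $Q^{k-1}(x,a)$, and $Q(x,a)Q(a,a)$ is one of the summands of $Q^2(x,a)$. Only $z=x$ needs separate treatment, because $Q^{k-1}(x,x)$ is not controlled. Bounding all $z\ne x$ at once gives
\[
 Q^k(x,a)\le Q(x,a)+c_{k-1}\sum_{z\ne x}Q(x,z)Q(z,a)\le(1+\kappa c_{k-1})Q(x,a).
\]
Inductively you may then take $c_k=r^{k-1}$, because $1+\kappa r^{k-2}\le r^{k-1}$. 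Your recursion $c_k\le1+rc_{k-1}$ instead gives $1+r+\dots+r^{k-1}$, which is worse by up to a factor $k$.

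With $c_k=r^{k-1}$ you have $\sum_{k=1}^{N}c_k\le Nr^{N-1}$. Your summation then yields exactly $Q(x,A)\E[N_t^2r^{N_t-1}]=t(1+rt)e^{(r-1)t}Q(x,A)$. This corrected version is precisely the paper's proof; the paper phrases the sum as the union bound $\P_x^Q(H_A\le n)\le Q(x,A)\,nr^{n-1}$.

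The remaining ingredients of your proposal are correct: the first-jump lower bound, the case $Q(x,A)=0$, the Poisson moment identities, the extension to infinite $A$, and the limit $B_\kappa(0^+)=1$.
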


\begin{proof}
Induction gives, for every $n\ge1$ and $x\ne y$,

\begin{equation}
 Q^n(x,y)\le r^{n-1}Q(x,y).
 \label{eq:multistep-bridge}
\end{equation}

The case $n=1$ is immediate.  If \eqref{eq:multistep-bridge} holds at $n$,
then

\begin{align*}
 Q^{n+1}(x,y)
 &=Q^n(x,x)Q(x,y)+\sum_{z\ne x}Q^n(x,z)Q(z,y)\\
 &\le Q(x,y)+r^{n-1}Q^2(x,y)\\
 &\le(1+\kappa r^{n-1})Q(x,y)
 \le r^nQ(x,y),
\end{align*}

which proves \eqref{eq:multistep-bridge}.

Let $(Y_n)_{n\ge0}$ be the embedded chain, independent of $N_t$, and set
$H_A:=\inf\{n\ge1:Y_n\in A\}$.  For a deterministic jump budget $n$,
\eqref{eq:multistep-bridge} and a union bound give

\[
 \P_x^Q(H_A\le n)
 \le\sum_{j=1}^nQ^j(x,A)
 \le Q(x,A)\sum_{j=1}^nr^{j-1}
 \le Q(x,A)nr^{n-1}.
\]

Consequently,

\begin{equation}
 \E_x^Q\bigl[N_t\1_{\{H_A\le N_t\}}\bigr]
 \le Q(x,A)\E[N_t^2r^{N_t-1}].
 \label{eq:bridge-numerator}
\end{equation}

On the other hand, exiting on the first jump yields

\begin{equation}
 \P_x^Q(H_A\le N_t)\ge(1-e^{-t})Q(x,A).
 \label{eq:bridge-denominator}
\end{equation}

For $F(r):=\E[r^{N_t}]=e^{t(r-1)}$,

\begin{equation}
 \E[N_t^2r^{N_t-1}]=rF''(r)+F'(r)
 =t(1+tr)e^{t(r-1)}.
 \label{eq:poisson-bridge-moment}
\end{equation}

Equations~\eqref{eq:bridge-numerator}--\eqref{eq:poisson-bridge-moment} and
$\{\tau_A\le t\}=\{H_A\le N_t\}$ prove
\eqref{eq:bridge-exit-bound}.  If $Q(x,A)=0$, then
\eqref{eq:multistep-bridge} makes both sides of
\eqref{eq:bridge-exit-bound} zero; no division by $Q(x,A)$ is required.
\end{proof}

With this estimate in hand, the reverse-comparison argument extends to
kernels satisfying the two-step condition.

For a kernel $Q$ equivariant under a transitive subgroup
$\Gamma\le\Aut(G)$, define

\[
 t_c^Q(\lambda):=\sup\{t\ge0:\theta_Q(\lambda,t)=0\},
 \qquad
 \lambda_\infty^Q:=\lim_{t\to\infty}\lambda_c^Q(t).
\]

\begin{theorem}
\label{thm:bridge-kernel-regularity}
Let $G$ be an infinite, connected, locally finite, transitive graph of
superlinear growth.  Let $Q$ be an irreducible Markov kernel on $V(G)$,
equivariant under a transitive subgroup $\Gamma\le\Aut(G)$, and satisfying
\eqref{eq:bridge-condition}.  Then $\lambda_c^Q$ is continuous and strictly
decreasing on $(0,\infty)$, and $-\log\lambda_c^Q$ is locally bi-Lipschitz.

Moreover, $t_c^Q$ is finite, continuous, strictly decreasing, and locally
bi-Lipschitz on
$(\lambda_\infty^Q,\infty)$, while

\[
 t_c^Q(\lambda)=\infty,
 \qquad 0<\lambda\le\lambda_\infty^Q.
\]
\end{theorem}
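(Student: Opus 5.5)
The plan is to rerun the simple-random-walk argument of Sections~\ref{sec:reverse-comparison} and~\ref{sec:critical-curves}, with Lemma~\ref{lem:bridge-exit-bound} taking the place of Lemma~\ref{lem:exit-weighted-jumps}.

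\emph{Reverse differential inequality.} Proposition~\ref{prop:russo-all-kernels} gives the Russo formulas \eqref{eq:russo-lambda-Q} and \eqref{eq:russo-time-Q} for $f_\Lambda^Q$, and $f_\Lambda^Q$ is $C^1$. The identity \eqref{eq:exit-density-identity} holds for every rate-one kernel, so
\[
 t\,q_{t,Q}^S(x)\le\E_x^Q\bigl[N_t\1_{\{\tau_{S^c}\le t\}}\bigr].
\]
We apply Lemma~\ref{lem:bridge-exit-bound} with $A=S^c$, which is legitimate since $x\in S$. This gives $t\,q_{t,Q}^S(x)\le B_\kappa(t)\,h_{t,Q}^S(x)$ pointwise in $x$, so no connectivity reduction of the kind used in Lemma~\ref{lem:exit-weighted-jumps} is needed. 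Summing against the nonnegative weights $\P(L_\Lambda^Q=S)\,a^S(x)$ then yields
\[
 \partial_t f_\Lambda^Q\le\frac{\lambda B_\kappa(t)}{t}\,\partial_\lambda f_\Lambda^Q,
\]
the analogue of \eqref{eq:reverse-differential-inequality} with $C(\Delta,\lambda,t)$ replaced by the continuous function $B_\kappa(t)$. The proof of Theorem~\ref{thm:reverse-characteristic-comparison} then goes through verbatim with
\[
 M_\cK=\max_{t\in[u,v]}\frac{B_\kappa(t)}{t}.
\]
The exhaustion by balls and Corollary~\ref{cor:infinite-volume-comparison} are kernel-independent, so this gives $\theta_Q(\lambda_0e^{-M_\cK(t-s)},t)\le\theta_Q(\lambda_0,s)$. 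The forward inequality \eqref{eq:critical-density-comparison} already holds for all $Q$ by Theorem~\ref{thm:strict-critical-density}.

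\emph{Regularity.} Once $0<\lambda_c^Q(t)<\infty$ for all $t>0$ and $\lambda_c^Q(t)\to\infty$ as $t\downarrow0$ are established, the proofs of Theorem~\ref{thm:continuous-critical-density} and Proposition~\ref{prop:inverse-critical-continuity} transfer word for word. They use only these facts together with the forward and reverse characteristic comparisons and \eqref{eq:critical-zero-supercritical}.

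\emph{Nondegeneracy of $\lambda_c^Q$ (main obstacle).} This step needs new input, since the cited results of Angel et al.\ are for simple random walk.

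For the lower bound, each frog's range has expected size at most $1+t$. A branching comparison therefore bounds the expected cluster size by $\sum_n(\lambda t)^n$, so $\lambda_c^Q(t)\ge1/t$. In particular $\lambda_c^Q(t)>0$ and $\lambda_c^Q(t)\to\infty$ as $t\downarrow0$.

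For finiteness I would argue as follows.
\begin{enumerate}
\item Irreducibility and equivariance give a finite symmetric generating set $F$ of $\Gamma$-orbit types $x\to y$, each with $Q^{n}(x,y)>0$ for some $n$.
\item For any $\varepsilon>0$, choosing $\lambda$ large makes the first-jump macro-edges along a finite set of $\Gamma$-orbits present with probability $\ge1-\varepsilon$, independently across sources.
\item Those orbits generate a connected, $\Gamma$-transitive graph $G'$ quasi-isometric to $G$. The graph $G'$ then has superlinear growth, so $p_c(G')<1$ by \cite{DuminilCopinGoswamiRaoufiSeveroYadin2020}.
\item The directed/bond identity used in Proposition~\ref{prop:small-lifespan-limit}, or a Liggett--Schonmann--Stacey domination of the directed field, yields survival.
\end{enumerate}

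The delicate point is step~2 combined with step~3: I need edges from single jumps, i.e.\ $Q(x,y)>0$ along a generating set. By the two-step condition $Q^n(x,y)\le r^{n-1}Q(x,y)$ for $x\ne y$, any $y\ne x$ reachable in $n$ steps already has $Q(x,y)>0$. This makes the support of $Q$ itself generate a connected graph, so first jumps suffice.
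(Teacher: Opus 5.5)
Your proposal is correct and follows essentially the paper's route. The common steps are the pointwise bound $t\,q_{t,Q}^S(x)\le B_\kappa(t)\,h_{t,Q}^S(x)$ inserted into the Russo formulas, the characteristic comparison combined with the forward bound, the branching bound $\lambda_c^Q(t)\ge 1/t$, and finiteness via first-jump percolation once $Q^n(x,y)\le r^{n-1}Q(x,y)$ and irreducibility force $Q(x,y)>0$ for all $x\ne y$. Because of that off-diagonal positivity, the paper works directly on the edges of $G$, declaring $\{x,y\}$ open when both $M_{x,y}\ge1$ and $M_{y,x}\ge1$, so your auxiliary graph $G'$ and the quasi-isometry step are unnecessary.
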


\begin{proof}
The branching comparison of
\cite[Proposition~2.1]{AngelDeLaRivaHermonShi2026} uses only
that the number of new vertices exposed by a collection of trajectories is
at most its total number of jumps.  It therefore applies to $Q$ and gives

\[
 \lambda_c^Q(t)\ge\frac1t>0.
\]

Irreducibility and \eqref{eq:multistep-bridge} imply
$Q(x,y)>0$ whenever $x\ne y$: otherwise every $Q^n(x,y)$ would vanish.  Fix
$o\in V$ and let

\[
 q_*:=\min_{y\sim o}\min\{Q(o,y),Q(y,o)\}>0.
\]

Equivariance and transitivity make $q_*$ a lower bound in both directions
along every edge of $G$.  Let $M_{x,y}$ be the number of frogs born at $x$
whose first jump occurs before time $t$ and lands at $y$.  Simultaneous
Poisson thinning over the possible first jumps makes the variables
$(M_{x,y})_{x\ne y}$ independent, with

\[
 M_{x,y}\sim\Poi\bigl(\lambda(1-e^{-t})Q(x,y)\bigr).
\]

Declare an edge $\{x,y\}$ open when $M_{x,y}\ge1$ and $M_{y,x}\ge1$.
Distinct bond events use disjoint thinning variables, so these bond variables
are independent and their parameters are at least

\[
 \left(1-\exp\{-\lambda(1-e^{-t})q_*\}\right)^2.
\]

Since $p_c(G)<1$ for every quasi-transitive graph of superlinear growth
\cite{DuminilCopinGoswamiRaoufiSeveroYadin2020}, choosing $\lambda$ large
enough makes this independent inhomogeneous percolation dominate a
supercritical homogeneous bond percolation: use one independent uniform
random variable per edge to couple the two models.  It therefore has an infinite
open cluster with positive probability.  An open path is an activation path,
so $\lambda_c^Q(t)<\infty$.

The kernel-independent form of
Lemma~\ref{lem:exit-density} and then
Lemma~\ref{lem:bridge-exit-bound} with $A=S^c$ give, pointwise in
$x\in S$,

\begin{equation}
 t q_{t,Q}^S(x)
 \le \E_x^Q\bigl[N_t\1_{\{\tau_{S^c}\le t\}}\bigr]
 \le B_\kappa(t)h_{t,Q}^S(x).
 \label{eq:bridge-exit-density}
\end{equation}

Substitute \eqref{eq:bridge-exit-density} into \eqref{eq:russo-time-Q} and use
\eqref{eq:russo-lambda-Q} to obtain

\begin{equation}
 \partial_t f_\Lambda^Q(\lambda,t)
 \le\frac{\lambda B_\kappa(t)}t
      \partial_\lambda f_\Lambda^Q(\lambda,t).
 \label{eq:bridge-reverse-differential}
\end{equation}

The coefficient $B_\kappa(t)/t$ depends only on time and is bounded on
compact positive time intervals.  Put

\begin{equation}
 A_\kappa(s,t):=\int_s^t\frac{B_\kappa(r)}r\di r,
 \qquad 0<s<t.
 \label{eq:bridge-characteristic-action}
\end{equation}

Apply \eqref{eq:bridge-reverse-differential} along
$\lambda_r=\lambda\exp\{-A_\kappa(s,r)\}$.  The chain rule gives
$\frac{\mathrm d}{\mathrm dr}f_\Lambda^Q(\lambda_r,r)\le0$; integrating and
then exhausting $V$ gives

\begin{equation}
 \theta_Q(\lambda e^{-A_\kappa(s,t)},t)
 \le\theta_Q(\lambda,s),
 \qquad 0<s<t.
 \label{eq:bridge-characteristic-comparison}
\end{equation}

Replacing \eqref{eq:reverse-characteristic-comparison} by
\eqref{eq:bridge-characteristic-comparison} in the proof of
Theorem~\ref{thm:continuous-critical-density} proves continuity of
$\lambda_c^Q$.
Strict decrease follows from Theorem~\ref{thm:strict-critical-density} and
$0<\lambda_c^Q(t)<\infty$.  If $0<s<t$ and
$\lambda<\lambda_c^Q(s)$, then
\eqref{eq:bridge-characteristic-comparison} gives
$\lambda e^{-A_\kappa(s,t)}\le\lambda_c^Q(t)$.  Letting
$\lambda\uparrow\lambda_c^Q(s)$ and using the forward comparison yields

\begin{equation}
 e^{-A_\kappa(s,t)}\lambda_c^Q(s)
 \le\lambda_c^Q(t)
 \le\frac{1-e^{-s}}{1-e^{-t}}\lambda_c^Q(s).
 \label{eq:bridge-critical-curve-two-sided}
\end{equation}
Equivalently,
\[
 \int_s^t\frac{\di r}{e^r-1}
 \le\log\frac{\lambda_c^Q(s)}{\lambda_c^Q(t)}
 \le\int_s^t\frac{B_\kappa(r)}r\di r.
\]

Both integrands are bounded above and bounded away from zero on compact
positive time intervals.  This proves local bi-Lipschitz regularity.
Repeating the proof of Proposition~\ref{prop:inverse-critical-continuity},
with $f=\lambda_c^Q$, using $\lambda_c^Q(t)\ge1/t$ and
\eqref{eq:bridge-critical-curve-two-sided}, gives the assertions for
$t_c^Q$.
\end{proof}

We next verify the two-step condition for the principal long-range example.

\begin{corollary}
\label{cor:heavy-tailed-critical-curves}
Let $G$ be a Cayley graph with $|B_G(o,n)|\asymp n^d$ for some $d>1$.
Let $Q$ be a symmetric Markov kernel invariant under left translations such
that, for some $\alpha>0$ and $0<c\le C<\infty$,

\begin{equation}
 c(1+d_G(x,y))^{-(d+\alpha)}
 \le Q(x,y)\le
 C(1+d_G(x,y))^{-(d+\alpha)},
 \qquad x\ne y.
 \label{eq:power-law-kernel}
\end{equation}

Then $\lambda_c^Q$ and $t_c^Q$ are continuous and strictly decreasing on
$(0,\infty)$, while $-\log\lambda_c^Q$ and $t_c^Q$ are locally
bi-Lipschitz there.  Moreover, all conclusions of
Corollary~\ref{cor:bridge-sharpness} hold; in particular, its explicit
supercritical survival bound and its exponential subcritical cluster tails
hold for this power-law model.
\end{corollary}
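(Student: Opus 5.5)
The plan is to deduce the corollary from Theorem~\ref{thm:bridge-kernel-regularity} and from the endpoint result $\lambda_\infty^Q=0$. So three things must be checked: the hypotheses of that theorem, the two-step condition \eqref{eq:bridge-condition} for the power-law kernel, and finiteness of $t_c^Q(\lambda)$ at every positive density. The structural hypotheses are routine. A Cayley graph with $|B_G(o,n)|\asymp n^d$, $d>1$, is transitive of superlinear growth. Left-invariance gives equivariance under the transitive group of left translations. The lower bound in \eqref{eq:power-law-kernel} gives $Q(x,y)>0$ for all $x\ne y$, hence irreducibility.

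\emph{Two-step condition.} Fix $x\ne y$, write $D=d_G(x,y)\ge1$, and bound
\[
 Q^2(x,y)=Q(x,x)Q(x,y)+Q(x,y)Q(y,y)+\sum_{z\ne x,y}Q(x,z)Q(z,y).
\]
The first two terms are at most $2Q(x,y)$. For the sum, split according to whether $d_G(x,z)\ge D/2$ or $d_G(z,y)\ge D/2$; by the triangle inequality one of these always holds. On the first part, $Q(x,z)\le C(1+D/2)^{-(d+\alpha)}\le C'(1+D)^{-(d+\alpha)}$, and summing $Q(z,y)$ over $z$ gives at most $1$. The second part is symmetric. Hence
\[
 Q^2(x,y)\le 2Q(x,y)+2C'(1+D)^{-(d+\alpha)}\le(2+2C'/c)Q(x,y),
\]
so \eqref{eq:bridge-condition} holds with $\kappa=2+2C'/c$. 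Theorem~\ref{thm:bridge-kernel-regularity} then gives continuity, strict decrease and local bi-Lipschitz regularity of $\lambda_c^Q$ on $(0,\infty)$, and the same for $t_c^Q$ on $(\lambda_\infty^Q,\infty)$.

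\emph{Endpoint, the main obstacle.} It remains to show $\lambda_\infty^Q=0$, that is, survival for every $\lambda>0$ at sufficiently large $t$. My first attempt is to compare with nearest-neighbour dynamics. Each step of the $Q$-walk lands on a given neighbour with probability at least $q_*>0$, but a direct kernel comparison to simple random walk is not available in general. I would therefore instead adapt the polynomial-growth argument of \cite[Theorem~1.5(2)]{AngelDeLaRivaHermonShi2026}. That argument uses, for long lifespans, that a single walk covers a ball of radius $\asymp\sqrt t$ with a positive density of visited sites, so the activated region renormalises to a supercritical block percolation for any $\lambda>0$.

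For the power-law kernel I would run the same renormalisation with blocks of radius $R$ and time $t\gg R^{\max(2,\alpha)}$ (suitable scale $t^{1/\min(2,\alpha)}$). The required ingredients are heat-kernel bounds for stable-like walks on polynomial-growth groups, which give that a walk started in a block visits a positive fraction of a neighbouring block with probability bounded below uniformly in $R$. Long jumps only add connections. Once blocks are good with probability close to $1$, they dominate a supercritical percolation (using $p_c<1$ from \cite{DuminilCopinGoswamiRaoufiSeveroYadin2020} and finite-range dependence via Liggett--Schonmann--Stacey). This step, controlling the occupation density of the heavy-tailed walk (especially the transient cases $\alpha<d$ and the low-dimensional cases $d<2$), is where I expect the real work.

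Once $\lambda_\infty^Q=0$ is established, Theorem~\ref{thm:bridge-kernel-regularity} gives the $t_c^Q$ claims on all of $(0,\infty)$. The remaining sharpness conclusions follow by checking the hypotheses of Corollary~\ref{cor:bridge-sharpness}, which I expect to be exactly the two-step condition and transitivity verified above.
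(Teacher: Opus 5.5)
Your reduction is sound up to the endpoint. The structural hypotheses are checked correctly. Your verification of the two-step condition is valid and slightly more economical than the paper's: you isolate the two diagonal terms and then use only the row and column sums of $Q$ (available by symmetry). The paper instead bounds the convolution of $w(x,y)=(1+d_G(x,y))^{-(d+\alpha)}$ using $W=\sum_z w(o,z)<\infty$, and absorbs a possible holding mass into $C$. Via Theorem~\ref{thm:bridge-kernel-regularity} and Corollary~\ref{cor:bridge-sharpness}, this gives every claim of the statement except those that depend on $\lambda_\infty^Q=0$.

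\textbf{The gap is the endpoint.} You do not prove $\lambda_\infty^Q=0$. You outline a renormalisation programme and explicitly leave its core undone. As written, the proposal therefore gives regularity of $t_c^Q$ only on $(\lambda_\infty^Q,\infty)$, and the lifespan sharpness bound only for $\lambda>\lambda_\infty^Q$, not the assertions on $(0,\infty)$.

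The heuristic the programme rests on is also incorrect. A single walk run for time $t$ does not visit a positive fraction of a ball of radius $\sqrt t$. This already fails for simple random walk on $\mathbb Z^d$, $d\ge2$: the range is $\asymp t/\log t$ or $\asymp t$, against a volume $\asymp t^{d/2}$. It fails likewise for stable-like walks in the transient regime. Any block argument would have to obtain coverage collectively from the many frogs of an activated block, not from one trajectory.

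The paper does not redo this step. It invokes Remark~1.6 and the discussion after the proof of Theorem~1.5(2) in \cite{AngelDeLaRivaHermonShi2026}, which state that their polynomial-growth renormalisation already applies to the normalised \emph{nonholding} power-law kernel. It then removes a possible holding mass by a time change. By invariance, $Q(x,x)=\delta<1$ is constant. Deleting holding updates gives $\overline Q(x,y)=Q(x,y)/(1-\delta)$ for $x\ne y$ and changes the clock rate to $1-\delta$, so
\[
 \theta_Q(\lambda,t)=\theta_{\overline Q}(\lambda,(1-\delta)t),
 \qquad
 t_c^Q(\lambda)=\frac{t_c^{\overline Q}(\lambda)}{1-\delta}<\infty .
\]
Your proposal contains neither the citation nor this reduction. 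You would need the reduction even if you appealed to the literature, because the cited result concerns the nonholding kernel.
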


\begin{proof}
Put $s:=d+\alpha$ and

\[
 w(x,y):=(1+d_G(x,y))^{-s}.
\]

Polynomial growth and $s>d$ imply

\begin{equation}
 W:=\sum_{z\in V}w(o,z)<\infty,
 \qquad
 \sum_{z\in V}w(x,z)=W\quad(x\in V).
 \label{eq:power-law-summability}
\end{equation}

The finiteness of $W$ in \eqref{eq:power-law-summability} follows by summing
over the dyadic annuli $B(o,2^{k+1})\setminus B(o,2^k)$; invariance gives
the equality for every $x$.  For fixed $x,y,z\in V$, the triangle inequality
gives

\begin{equation}
 d_G(x,z)\ge\frac12d_G(x,y)
 \quad\text{or}\quad
 d_G(z,y)\ge\frac12d_G(x,y).
 \label{eq:power-law-distance-split}
\end{equation}

Splitting the convolution sum according to
\eqref{eq:power-law-distance-split} yields

\begin{equation}
 \sum_z w(x,z)w(z,y)
 \le 2^{s+1}Ww(x,y).
 \label{eq:power-law-subconvolution}
\end{equation}

Increase $C$ if necessary so that the upper bound in
\eqref{eq:power-law-kernel} also covers a possible diagonal mass; this is
possible because $Q(x,x)\le1=w(x,x)$.  Equations
\eqref{eq:power-law-kernel} and \eqref{eq:power-law-subconvolution} now give,
for $x\ne y$,

\begin{equation}
 Q^2(x,y)
 \le C^2 2^{s+1}Ww(x,y)
 \le\frac{C^2 2^{s+1}W}{c}Q(x,y).
 \label{eq:power-law-bridge-bound}
\end{equation}

Thus \eqref{eq:bridge-condition} holds with
$\kappa=C^2 2^{s+1}W/c$.  Theorem~\ref{thm:bridge-kernel-regularity} gives
the asserted regularity of $\lambda_c^Q$ and of $t_c^Q$ on
$(\lambda_\infty^Q,\infty)$.  Remark~1.6 and the discussion following the
proof of Theorem~1.5(2) in \cite{AngelDeLaRivaHermonShi2026} state that their
polynomial-growth renormalisation applies to the normalised nonholding
power-law kernel.  It therefore has finite critical lifespan at every
positive density.  For a possible holding mass, invariance makes
$Q(x,x)=\delta<1$ constant.  Deleting
holding jumps gives the kernel
$\overline Q(x,x)=0$ and
$\overline Q(x,y)=Q(x,y)/(1-\delta)$ for $x\ne y$, and merely changes the
clock rate from $1$ to $1-\delta$.  In particular,
\[
 \theta_Q(\lambda,t)
 =\theta_{\overline Q}(\lambda,(1-\delta)t),
 \qquad
 t_c^Q(\lambda)=\frac{t_c^{\overline Q}(\lambda)}{1-\delta}.
\]
Thus $t_c^Q(\lambda)<\infty$ for every $\lambda>0$,
$\lambda_\infty^Q=0$, and the generalised inverse is continuous and strictly
decreasing and locally bi-Lipschitz on $(0,\infty)$.
Corollary~\ref{cor:bridge-sharpness}, proved below, gives the remaining
sharpness conclusions.
\end{proof}

Corollary~\ref{cor:heavy-tailed-critical-curves} applies to the kernels in
\cite{AngelHermonShi2026HeavyTailed}, which studies discrete-time activation
times and the finite-volume cover lifespan rather than infinite-volume
critical curves.

Finally, the two-step estimate also supplies the input needed for the sharpness
argument of Angel et al.

\begin{corollary}
\label{cor:bridge-sharpness}
Under the hypotheses of
Theorem~\ref{thm:bridge-kernel-regularity}, for every $t>0$ and
$\lambda>\lambda_c^Q(t)$,
\begin{equation}
 \theta_Q(\lambda,t)
 \ge
 1-\left(
      \frac{\lambda_c^Q(t)}{\lambda}
    \right)^{1/(2B_\kappa(t))}.
 \label{eq:bridge-density-sharpness-strong}
\end{equation}
Consequently, for every $t>0$ there is $K_1>0$ such that

\[
 \theta_Q(\lambda,t)
 \ge K_1\bigl(\lambda-\lambda_c^Q(t)\bigr),
 \qquad
 \lambda_c^Q(t)\le\lambda\le2\lambda_c^Q(t),
\]

and for every $\lambda>\lambda_\infty^Q$, there is $K_2>0$ such that

\[
 \theta_Q(\lambda,t)
 \ge K_2\bigl(t-t_c^Q(\lambda)\bigr),
 \qquad
 t_c^Q(\lambda)\le t\le2t_c^Q(\lambda).
\]

Moreover, in either subcritical regime
$\lambda<\lambda_c^Q(t)$ or $t<t_c^Q(\lambda)$, there are constants
$C,c>0$ such that

\[
 \P_{\lambda,t,Q}(|\cC_o|\ge n)\le Ce^{-cn},
 \qquad n\ge0.
\]
\end{corollary}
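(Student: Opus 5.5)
The plan is to adapt the Duminil-Copin--Tassion / Angel et al. sharpness argument, feeding it the kernel-independent Russo formulas of Proposition~\ref{prop:russo-all-kernels} and the two-step bound of Lemma~\ref{lem:bridge-exit-bound}.

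\emph{Strong supercritical bound.} Fix $t>0$ and a finite $\Lambda\ni o$. Following the method of \cite{AngelDeLaRivaHermonShi2026}, define a finite-volume quantity $\varphi_\lambda(S)$ that measures the expected number of exit traces from $S\ni o$, weighted by internal activation. A natural choice is
\[
 \varphi_\lambda(S)=\lambda\sum_{x\in S}a^S_{\lambda,t,Q}(x)\,\E_x^Q\bigl[N_t\1_{\{\tau_{S^c}\le t\}}\bigr].
\]
We will derive two estimates.

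First, a differential inequality
\[
 \partial_\lambda f_\Lambda^Q\ge \frac{1}{2B_\kappa(t)\lambda}\,\inf_{S}\varphi_\lambda(S)\,(1-f_\Lambda^Q).
\]
It follows from \eqref{eq:russo-lambda-Q} together with \eqref{eq:bridge-exit-bound}, which converts $h^S_{t,Q}(x)$ into the jump-weighted exit mass at the cost of the factor $B_\kappa(t)$. The factor $2$ comes from the convention used in the Angel et al. argument.

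Second, if $\varphi_\lambda(S)<1$ for some finite $S$, then the cluster is stochastically dominated by a subcritical branching structure. The branching comparison \cite[Proposition~2.1]{AngelDeLaRivaHermonShi2026} only uses that the number of newly exposed vertices is at most the number of jumps, so it applies to $Q$. Iterating translates of $S$ then gives exponential decay of $\P(o\rightsquigarrow B(o,n)^c)$.

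Set $\tilde\lambda:=\sup\{\lambda:\exists S,\ \varphi_\lambda(S)<1\}$. Above $\tilde\lambda$ every $\varphi\ge 1$, so integrating $\partial_\lambda\log(1-f)\le -(2B_\kappa\lambda)^{-1}$ from $\tilde\lambda$ to $\lambda$ gives
\[
 1-f_\Lambda\le(\tilde\lambda/\lambda)^{1/(2B_\kappa)}.
\]
Letting $\Lambda\uparrow V$ shows $\theta_Q\ge 1-(\tilde\lambda/\lambda)^{1/(2B_\kappa)}$. This forces $\tilde\lambda\ge\lambda_c^Q$. On the other hand, exponential decay below $\tilde\lambda$ forces $\tilde\lambda\le\lambda_c^Q$. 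Hence $\tilde\lambda=\lambda_c^Q(t)$, which is \eqref{eq:bridge-density-sharpness-strong}.

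\emph{Linear bounds.} The linear lower bound in $\lambda$ follows directly from \eqref{eq:bridge-density-sharpness-strong}. The function $u\mapsto 1-u^{1/(2B)}$ is concave and vanishes at $u=1$, so on $[\lambda_c^Q,2\lambda_c^Q]$ it is at least a constant times $1-\lambda_c^Q/\lambda$, hence at least $K_1(\lambda-\lambda_c^Q)$.

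For the lifespan version, let $\lambda>\lambda_\infty^Q$ and $t\in[t_c^Q(\lambda),2t_c^Q(\lambda)]$. Write $\lambda_c^Q(t)\le\lambda$, and bound $\lambda-\lambda_c^Q(t)\ge\lambda_c^Q(t_c)-\lambda_c^Q(t)$ from below by $c(t-t_c)$ using the forward bound in \eqref{eq:bridge-critical-curve-two-sided}. That bound gives a strictly positive logarithmic decrement $\int_{t_c}^t(e^r-1)^{-1}dr$. Combined with the first estimate at lifespan $t$, and with $B_\kappa$ bounded on $[t_c,2t_c]$, this yields $K_2$.

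\emph{Subcritical tails.} In the regime $\lambda<\lambda_c^Q(t)$, the identification $\tilde\lambda=\lambda_c^Q$ gives a finite $S$ with $\varphi_\lambda(S)<1$. The regime $t<t_c^Q(\lambda)$ reduces to the previous one, since it is equivalent to $\lambda<\lambda_c^Q(t)$ by \eqref{eq:critical-inverse-sandwich} and strict monotonicity.

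The exponential bound on $|\cC_o|$, as opposed to on its radius, is the \textbf{main obstacle}, because ranges are long-range and unbounded. I would first try to dominate the exploration by a Galton--Watson-type process in which each $S$-block spawns an expected number $\varphi<1$ of new blocks, and each block exposes at most $\sum$ (jumps) new vertices. Since the total jump count has exponential moments (Poisson numbers of frogs and Poisson clocks), a subcritical branching process with exponentially integrable offspring has exponentially tailed total progeny. This yields $\P(|\cC_o|\ge n)\le Ce^{-cn}$.
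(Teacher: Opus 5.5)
Your architecture matches the paper's. You use the density Russo identity \eqref{eq:russo-lambda-Q} with Lemma~\ref{lem:bridge-exit-bound} on the supercritical side, an auxiliary threshold identified with $\lambda_c^Q(t)$, block exploration on the subcritical side, and the forward comparison for the lifespan transfer. Your supercritical half is correct and slightly sharper than the paper's. You define $\tilde\lambda$ through the jump-weighted quantity $\varphi=\widetilde\phi_{\lambda,t,Q}$ with the strict condition $\varphi<1$. Hence every $u>\tilde\lambda$ has $\phi_{u,t,Q}(S)\ge 1/B_\kappa(t)$ for all $S$, which gives exponent $1/B_\kappa(t)$. The factor $2$ is not a convention. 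In the paper it is slack, needed because the threshold is defined via $\inf_S\phi\le c_\kappa(t)$ and the infimum need not be attained. (Minor: $u\mapsto 1-u^{c}$ is convex for $0<c<1$, not concave. The linear bound you want is $1-u^c\ge c(1-u)$, which follows from concavity of $u\mapsto u^c$.)

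The gap is on the subcritical side, which carries both the identification $\tilde\lambda\le\lambda_c^Q(t)$ and the exponential tails.

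First, your claim that iterating translates of $S$ gives exponential decay of $\P(o\rightsquigarrow B(o,n)^c)$ is false under these hypotheses. The proof of Theorem~\ref{thm:bridge-kernel-regularity} shows $Q(x,y)>0$ for all $x\ne y$. For the power-law kernels, a single first jump already makes $\P(o\rightsquigarrow B(o,n)^c)$ of order $n^{-\alpha}$. You only need $|\cC_o|<\infty$, so that sentence must be replaced by a progeny bound.

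Second, and more seriously, the Galton--Watson domination you sketch is precisely the nontrivial step, and you give no mechanism for it. Translated blocks $S_w$ overlap each other and already explored sites, and offspring roots may lie in explored regions. If you reuse the original environment, the block offspring are dependent. If you skip explored sites, a block no longer has the law of $\xi$. \cite[Proposition~2.1]{AngelDeLaRivaHermonShi2026} does not supply this, since it is only the single-site comparison behind $\lambda_c^Q(t)\ge 1/t$.

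The missing ingredient is the refreshed-environment coupling of Lemma~\ref{lem:refreshed-block-coupling}:
\begin{enumerate}
\item Attach to each site an independent stack of environments, and let every block request a new level at each site it processes.
\item Use deferred decisions to obtain independent offspring with common law $\xi$ and $\E[\xi]\le\widetilde\phi_{\lambda,t,Q}(S)$. This uses that internal activation of $x$ is measurable without the cloud at $x$.
\item Prove, via the invariant that every vertex of every inspected level-one trace is either processed or queued as a block root, that the union of processed sites contains $\cC_o$. This gives $|\cC_o|\le|S|T$.
\end{enumerate}
With that lemma in place, your exponential-moment argument for the total progeny and the remaining steps go through.
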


\begin{proof}
For a finite $S\ni o$, put

\begin{equation}
 \begin{aligned}
  \phi_{\lambda,t,Q}(S)
  &:=\lambda\sum_{x\in S}
      a_{\lambda,t,Q}^S(x)h_{t,Q}^S(x),\\
  \widetilde\phi_{\lambda,t,Q}(S)
  &:=\lambda\sum_{x\in S}a_{\lambda,t,Q}^S(x)
      \E_x^Q\bigl[N_t\1_{\{\tau_{S^c}\le t\}}\bigr].
 \end{aligned}
 \label{eq:bridge-sharpness-quantities}
\end{equation}

Lemma~\ref{lem:bridge-exit-bound} gives, for every such $S$,

\begin{equation}
 \widetilde\phi_{\lambda,t,Q}(S)
 \le B_\kappa(t)\phi_{\lambda,t,Q}(S).
 \label{eq:bridge-sharpness-criterion}
\end{equation}

\begin{equation}
 c_\kappa(t):=\frac{1}{2B_\kappa(t)}>0.
 \label{eq:bridge-sharpness-threshold}
\end{equation}

The first quantity in \eqref{eq:bridge-sharpness-quantities} is the analogue
of \cite[Equation~(2.7)]{AngelDeLaRivaHermonShi2026}, and the second is the
analogue of \cite[Equation~(3.11)]{AngelDeLaRivaHermonShi2026}.
Proposition~\ref{prop:russo-all-kernels} supplies the density Russo identity
used in \cite[Section~3.1]{AngelDeLaRivaHermonShi2026}, while
\eqref{eq:bridge-sharpness-criterion} replaces the nearest-neighbour estimate
of Angel et al. in
\cite[Section~3.2]{AngelDeLaRivaHermonShi2026}.
Appendix~\ref{app:bridge-sharpness-adaptation} gives the adapted block
exploration, its refreshed-environment coupling, and the identification of
the resulting auxiliary threshold with $\lambda_c^Q(t)$.  It proves
\eqref{eq:bridge-density-sharpness-strong} and the exponential cluster tail
whenever $\lambda<\lambda_c^Q(t)$.  The exploration scheme itself is due to
Angel--de la Riva--Hermon--Shi; the new input here is
\eqref{eq:bridge-sharpness-criterion}, and the fixed choice
\eqref{eq:bridge-sharpness-threshold} yields the explicit global bound.

For $L:=\lambda_c^Q(t)$ and $L\le\lambda\le2L$, concavity gives
\[
 1-\left(\frac L\lambda\right)^{c_\kappa(t)}
 \ge\bigl(1-2^{-c_\kappa(t)}\bigr)\frac{\lambda-L}{L},
\]
which gives the asserted $K_1$.  For the lifespan conclusion, put
$T=t_c^Q(\lambda)$.  The forward critical-curve comparison gives
\[
 \frac{\lambda_c^Q(r)}{\lambda}
 \le\frac{1-e^{-T}}{1-e^{-r}},
 \qquad T<r\le2T.
\]
Inserting this into \eqref{eq:bridge-density-sharpness-strong}, uniformly
bounding $c_\kappa(r)$ away from zero on $[T,2T]$, gives the asserted $K_2$;
the calculation is recorded in
\eqref{eq:bridge-lifespan-sharpness-transfer} in the appendix.  Finally,
$t<t_c^Q(\lambda)$ implies $\lambda<\lambda_c^Q(t)$ by generalised
inversion, so the density-subcritical tail also gives the lifespan-subcritical
tail.
\end{proof}

Condition~\eqref{eq:bridge-condition} is sufficient but need not hold for an
arbitrary irreducible kernel.  For example, a finite-range kernel may have
$Q^2(x,y)>0=Q(x,y)$.  For simple random walk, the separate endpoint assertion
\eqref{eq:lambda-infinity-boundary} is precisely the lifespan-finiteness
part of \cite[Conjecture~1.1]{AngelDeLaRivaHermonShi2026}.

\appendix

\section{Adaptation of the sharpness proof to two-step kernels}
\label{app:bridge-sharpness-adaptation}

This appendix adapts the exploration and differential-inequality arguments
of \cite[Sections~3.1--3.2]{AngelDeLaRivaHermonShi2026}.  We include the
details because that argument is formulated for simple random walk, whereas
the present kernels may have infinite range. The new ingredients are the two-step exit estimate
\eqref{eq:bridge-exit-bound}, its fixed threshold
\eqref{eq:bridge-sharpness-threshold}, and the quantitative consequences
derived from them.

Throughout the appendix, assume the hypotheses of
Theorem~\ref{thm:bridge-kernel-regularity}.  Fix $t>0$, and retain the
quantities in \eqref{eq:bridge-sharpness-quantities}.  In the notation of
Angel, de la Riva, Hermon, and Shi, $\phi_{\lambda,t,Q}$ corresponds to their
$\phi_{\lambda,t}$ from \cite[Equation~(2.7)]{AngelDeLaRivaHermonShi2026},
while $\widetilde\phi_{\lambda,t,Q}$ corresponds to their
$\widetilde\phi_{\lambda,t}$ from
\cite[Equation~(3.11)]{AngelDeLaRivaHermonShi2026}.  Their bound in
\cite[Lemma~3.3]{AngelDeLaRivaHermonShi2026} is replaced, for every finite
$S\ni o$ and without a connectivity assumption, by
\eqref{eq:bridge-sharpness-criterion}.

\subsection*{The block offspring variable}

We first make the adapted subcritical exploration precise.  Fix a finite
set $S\ni o$.  For each $v\in V$, fix $\gamma_v$ in the transitive
equivariance group such that $\gamma_v o=v$, and put $S_v:=\gamma_v S$.
A fresh block rooted at $v$ has independent complete frog environments at
the sites of $S_v$, revealed by the following procedure.  Fix a deterministic
ordering of $V$ to resolve any ordering choices in the exploration.
\begin{enumerate}
 \item Initialise an internal queue with $v$, an empty set of processed
 sites, and an empty offspring list.
 \item Remove the first site $x$ from the internal queue.  If $x$ has
 already been processed in this block, discard this queue entry.  Otherwise,
 request the complete frog environment at $x$ and mark $x$ as processed.
 \item Classify each individual frog based at $x$ as staying or exiting,
 according to whether its complete time-$t$ trace stays in $S_v$.
 For every staying frog, add its unprocessed trace vertices to the internal
 queue.  For every exiting frog, append to the offspring list its location
 after each of its $N_t$ clock updates, retaining multiplicity, including
 repeated locations arising from holding updates.  These offspring entries
 are treated as roots of later blocks, even when they lie in $S_v$.
 \item Continue until the internal queue is empty.
\end{enumerate}
Only the staying frogs are used for internal activation.  At most $|S|$
site environments are requested in a block, so the procedure terminates
almost surely.  Listing every post-update location of an exiting frog
includes every vertex of its trace other than its already processed source.

Let $\xi$ be the length of the offspring list for a block rooted at $o$.
For $x\in S$, the event that $o$ internally activates $x$ has a simple
activation witness ending at $x$, and hence is measurable without using the
cloud based at $x$.  It is therefore independent of the marked collection of
exiting frogs based at $x$.  Poisson marking gives
\begin{equation}
 \E[\xi]
 \le \lambda\sum_{x\in S}a_{\lambda,t,Q}^S(x)
       \E_x^Q\bigl[N_t\1_{\{\tau_{S^c}\le t\}}\bigr]
 =\widetilde\phi_{\lambda,t,Q}(S).
 \label{eq:bridge-block-offspring-mean}
\end{equation}

Moreover, $\xi$ is at most the total number of clock updates made by all
frogs based in a complete fresh environment on the block.  The number of
frogs at its $|S|$ sites is Poisson with mean $\lambda|S|$, and each frog's
clock-update count $N_t$ has probability generating function
$\exp\{t(s-1)\}$.  Thus, for $s\ge1$,

\begin{equation}
 \E[s^\xi]
 \le
 \exp\!\left\{
   \lambda|S|\bigl(e^{t(s-1)}-1\bigr)
 \right\}<\infty.
 \label{eq:bridge-block-offspring-moment}
\end{equation}
In particular, $\xi<\infty$ almost surely.

\subsection*{Independent stacks and domination}

The refreshing construction of
\cite[Section~3.2]{AngelDeLaRivaHermonShi2026} can be made compatible with
overlapping long-range blocks as follows.

\begin{lemma}
\label{lem:refreshed-block-coupling}
The blocks can be coupled with the original frog environment so that their
offspring variables are independent with common law $\xi$.  If $T$ is the
total progeny of the resulting Galton--Watson process, then
\begin{equation}
 |\cC_o|\le |S|T.
 \label{eq:bridge-block-cluster-domination}
\end{equation}
\end{lemma}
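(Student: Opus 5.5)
We construct the coupling by running a global exploration in which every block is built from \emph{fresh} site environments, and we then check that the original cluster is covered by the union of the explored blocks. Take, for every pair $(u,x)$ consisting of a block label $u$ and a site $x\in V$, an independent complete frog environment $\omega_{u,x}$ (Poisson count and walks), drawn from a master family indexed by block labels. The blocks are labelled by nodes $u$ of a Ulam--Harris tree with root label $\varnothing$ rooted at $o$. A block with label $u$ rooted at $v_u$ runs the procedure above on $S_{v_u}$, using only the environments $\omega_{u,\cdot}$. Its offspring list, taken in order, defines the children $u1,u2,\dots$ together with their roots. The blocks use pairwise disjoint families of environments, and each child block depends on its parent only through its root. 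Equivariance of $Q$ and the fixed choice $\gamma_v$ then make the offspring count of every block distributed as $\xi$, independently of everything explored earlier, by the strong Markov property of the exploration. Hence the block tree is a Galton--Watson tree with offspring law $\xi$ and total progeny $T$.

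The main obstacle is relating this refreshed process to the original frog environment. We resolve it through a domination step rather than an exact coupling. We define the original environment at each site $x$ to be $\omega_{u,x}$, where $u$ is the first block in the exploration order (breadth-first, ties resolved by the fixed ordering of $V$) that requests site $x$; sites never requested receive an unused independent environment. Because the requesting block is determined by previously revealed data, this assignment gives each site an independent environment with the correct law, so the resulting field has law $\P_{\lambda,t,Q}$.

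It remains to show that every vertex of $\cC_o$ is requested by some block, so that $\cC_o\subseteq\bigcup_u S_{v_u}$. We argue by induction along an activation path $o=x_0\to x_1\to\cdots$. Suppose $x_i$ has been processed in some block $u$ using its original environment. Consider a frog at $x_i$ whose trace hits $x_{i+1}$.
\begin{itemize}
\item If the frog stays in $S_{v_u}$, then $x_{i+1}$ is placed in the internal queue of $u$. It is then either processed by $u$ itself or already processed there; in the latter case it was requested there first or earlier, and it remains covered.
\item If the frog exits, then $x_{i+1}$ appears in the offspring list, so some child block is rooted at $x_{i+1}$ and requests it.
\end{itemize}
One subtlety arises when $x_{i+1}$ was already requested by an earlier block: then its original environment is that earlier block's copy, and the induction continues from there. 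The point is that a \emph{later} request of a site uses a fresh copy that does not matter for the original model, while the first request always uses the true environment. With a breadth-first global order, the environment of $x_i$ used when it is first requested is the true one, and every outgoing activation edge of $x_i$ in the original model is recorded by that first block, either internally or as offspring.

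Given this covering, every activated vertex lies in $S_{v_u}$ for some block $u$ of the tree. Since $|S_{v_u}|=|S|$, this gives $|\cC_o|\le|S|T$, which is \eqref{eq:bridge-block-cluster-domination}.

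The delicate point to verify carefully is that the adaptive site-to-block assignment yields an independent field with the correct law while leaving the block offspring variables i.i.d. This is exactly the refreshed-environment argument of \cite[Section~3.2]{AngelDeLaRivaHermonShi2026}. Long-range blocks may overlap, so offspring roots may fall inside earlier blocks, but disjointness of the index families $(u,x)$ makes the overlap harmless.
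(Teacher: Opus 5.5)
Your proposal is correct and is essentially the paper's refreshed-environment coupling: the first request at each site sees the original environment, later requests use fresh copies, and every trace vertex of a first-processed site is either processed internally or becomes a block root, so $\cC_o\subseteq\bigcup_u S_{v_u}$ and $|\cC_o|\le|S|T$. The only difference is bookkeeping. The paper makes the original field level one of per-site stacks and uses deferred decisions to show the blocks are i.i.d.; you make i.i.d. per-block environments primitive, so the Galton--Watson structure is immediate, and move the deferred-decisions step to showing that the first-request assignment has the product law $\P_{\lambda,t,Q}$.
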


\begin{proof}
We explore block roots in breadth-first order, beginning with $o$, and use the
internal-queue procedure above within each block.  At every site $x$, attach
a stack $(\mathcal E_{x,j})_{j\ge1}$ of complete frog environments.  All
stack levels are independent over $x$ and $j$ and have the original
site-environment laws, with $\mathcal E_{x,1}$ the original environment at
$x$.  After $n$ completed blocks, let $K_x(n)$ be the number of levels
requested at $x$, and let $\mathcal F_n$ be the sigma-field generated by the
revealed environments and the resulting block queue.  In the next block,
when $x$ is first processed by its internal queue, request
$\mathcal E_{x,K_x(n)+1}$.

By deferred decisions, conditionally on $\mathcal F_n$, the next unused
levels $(\mathcal E_{x,K_x(n)+1})_{x\in V}$ have their original independent
product law.  Indeed, every request is determined by previously revealed
levels, and an unused level has not entered any earlier exploration
decision.  The root of the next block is $\mathcal F_n$-measurable.  The
block algorithm, applied to the next unused levels on its translate $S_v$,
therefore has the same conditional law as a fresh block.  Equivariance makes
its offspring count have law $\xi$, independently of the preceding block
exploration.  If exploration terminates, extend the sequence by independent
variables with law $\xi$.  Thus the breadth-first block queue has the law
of a Galton--Watson exploration with offspring distribution $\xi$.

For domination, consider the following invariant after each completed
block: every vertex on an inspected level-one trace has either been
internally processed in some block or occurs as a root of a completed or
queued block.  Uninspected level-one environments retain their original
conditional product law.  To verify the invariant, we consider the first block
that requests the environment at a site $x$.  It uses level one and processes
$x$ internally.  Every vertex on a staying frog's trace is processed within
that block.  Every other trace vertex of an exiting frog appears among its
post-update locations and is put on the block queue; its source $x$ has
already been processed.  This accounts for every location on every
inspected level-one trace.

If $T=\infty$, \eqref{eq:bridge-block-cluster-domination} is immediate.
If $T<\infty$, the block queue eventually empties.  Let $D$ be the union
of the sites internally processed in all $T$ blocks.  It contains $o$.
For every $x\in D$, the first request at $x$ inspected its original
level-one environment.  The invariant and the empty final queue imply that
every vertex on every original frog trace based at $x$ also belongs to $D$:
each completed block root is internally processed in its own block.
Consequently, induction along any finite activation path in the original
environment gives $\cC_o\subseteq D$.  Each block processes at most $|S|$
sites, so $|\cC_o|\le |D|\le |S|T$, as claimed.
\end{proof}

If $\E[\xi]<1$, Equations~\eqref{eq:bridge-block-offspring-mean} and
\eqref{eq:bridge-block-offspring-moment} imply that $T$ has an exponential
moment.  For completeness, if $F(s)=\E[s^\xi]$, then
$F(1)=1$ and $F'(1)<1$, so there is $z>1$ with $F(z)<z$. Now choose
$s_0\in(1,z)$ so close to one that $s_0F(z)\le z$.  The recursive generating
functions for total progeny through finitely many generations are then
bounded by $z$ at $s_0$, and monotone convergence gives
$\E[s_0^T]\le z$.  Markov's inequality and
\eqref{eq:bridge-block-cluster-domination} therefore give
\begin{equation}
 \widetilde\phi_{\lambda,t,Q}(S)<1
 \quad\Longrightarrow\quad
 \P_{\lambda,t,Q}(|\cC_o|\ge n)\le Ce^{-cn},
 \qquad n\ge0,
 \label{eq:bridge-block-consequence}
\end{equation}
for constants $C,c>0$ depending on the parameters and $S$.

\subsection*{The auxiliary threshold}

We next adapt the threshold-identification argument of Angel et al., using
the fixed value $c_\kappa(t)$ from
\eqref{eq:bridge-sharpness-threshold}.  Define
\begin{equation}
 \widehat\lambda_c^Q(t)
 :=\sup\left\{\lambda\ge0:
   \inf_{\substack{S\ni o\\S\text{ finite}}}
   \phi_{\lambda,t,Q}(S)\le c_\kappa(t)
 \right\}.
 \label{eq:bridge-auxiliary-critical-density}
\end{equation}
The defining set is nonempty because it contains $\lambda=0$.
If $\lambda<\widehat\lambda_c^Q(t)$, the definition of the supremum gives a
$\mu>\lambda$ in the set on the right.  The infimum need not be attained, but
there is a finite $S\ni o$ such that
\[
 \phi_{\mu,t,Q}(S)<\frac{3}{4B_\kappa(t)}.
\]
Equations~\eqref{eq:bridge-sharpness-criterion} and
\eqref{eq:bridge-block-consequence} give an exponential cluster tail at
$(\mu,t)$.  Poisson thinning from $\mu$ to $\lambda$ preserves the bound with
the same constants.  Hence
\begin{equation}
 \lambda<\widehat\lambda_c^Q(t)
 \quad\Longrightarrow\quad
 \P_{\lambda,t,Q}(|\cC_o|\ge n)\le Ce^{-cn}
 \quad(n\ge0).
 \label{eq:bridge-auxiliary-subcritical}
\end{equation}
In particular,
$\widehat\lambda_c^Q(t)\le\lambda_c^Q(t)$; since the latter is finite,
$\widehat\lambda_c^Q(t)<\infty$.

Suppose now that $\lambda>\widehat\lambda_c^Q(t)$.  For every
$u\in(\widehat\lambda_c^Q(t),\lambda)$,
\[
 \inf_{\substack{S\ni o\\S\text{ finite}}}
 \phi_{u,t,Q}(S)>c_\kappa(t).
\]
For a finite $\Lambda\ni o$, Proposition~\ref{prop:russo-all-kernels} and
the identity
$\sum_{S\ni o}\P_{u,t,Q}(L_\Lambda^Q=S)=1-f_\Lambda^Q(u,t)$ give
\begin{equation}
 \partial_u f_\Lambda^Q(u,t)
 \ge\frac{c_\kappa(t)}u
      \bigl(1-f_\Lambda^Q(u,t)\bigr).
 \label{eq:bridge-auxiliary-russo-inequality}
\end{equation}
If $f_\Lambda^Q$ reaches one, the next bound is immediate.  Otherwise we divide
by $1-f_\Lambda^Q$ and integrate from
$a\in(\widehat\lambda_c^Q(t),\lambda)$ to $\lambda$ to obtain
\[
 f_\Lambda^Q(\lambda,t)
 \ge1-\left(\frac a\lambda\right)^{c_\kappa(t)}.
\]
Let $a\downarrow\widehat\lambda_c^Q(t)$, and exhaust the locally finite
reference graph by finite balls.  The exit events decrease to
$\{|\cC_o|=\infty\}$ even when $Q$ has infinite range: an infinite cluster
exits every ball, while every finite cluster is contained in some ball.
Therefore
\begin{equation}
 \theta_Q(\lambda,t)
 \ge1-\left(
   \frac{\widehat\lambda_c^Q(t)}\lambda
 \right)^{c_\kappa(t)},
 \qquad \lambda>\widehat\lambda_c^Q(t).
 \label{eq:bridge-auxiliary-supercritical}
\end{equation}
This is strictly positive for every
$\lambda>\widehat\lambda_c^Q(t)$, so
$\lambda_c^Q(t)\le\widehat\lambda_c^Q(t)$.  Together with the reverse
inequality above,
\begin{equation}
 \widehat\lambda_c^Q(t)=\lambda_c^Q(t).
 \label{eq:bridge-auxiliary-equals-critical}
\end{equation}
Equations~\eqref{eq:bridge-auxiliary-subcritical} and
\eqref{eq:bridge-auxiliary-equals-critical} prove the exponential tail for
every $\lambda<\lambda_c^Q(t)$, while
\eqref{eq:bridge-auxiliary-supercritical} becomes the explicit bound
\eqref{eq:bridge-density-sharpness-strong}.  This fixed-exponent global bound
is the quantitative gain from the new two-step estimate.

For completeness, the local density bound in
Corollary~\ref{cor:bridge-sharpness} follows directly.  If
$L=\lambda_c^Q(t)$ and $L\le\lambda\le2L$, the function
$x\mapsto1-(1+x)^{-c_\kappa(t)}$ is concave on $[0,1]$, so
\[
 \theta_Q(\lambda,t)
 \ge
 \bigl(1-2^{-c_\kappa(t)}\bigr)\frac{\lambda-L}{L}.
\]

\subsection*{Transfer to the lifespan parameter}

This last step is a consequence of the new global density bound and the
forward critical-curve comparison, rather than a new exploration.  Fix
$\lambda>\lambda_\infty^Q$ and put $T=t_c^Q(\lambda)$.  Then
$0<T<\infty$ and $\lambda_c^Q(T)=\lambda$.  For $T<r\le2T$,
\eqref{eq:critical-density-comparison} gives
\begin{equation}
 \frac{\lambda_c^Q(r)}\lambda
 \le \rho(T,r):=\frac{1-e^{-T}}{1-e^{-r}}.
 \label{eq:bridge-time-density-comparison}
\end{equation}
Set $c_*:=\min_{u\in[T,2T]}c_\kappa(u)>0$.  Strict decrease gives
$\lambda_c^Q(r)<\lambda$, so
\eqref{eq:bridge-density-sharpness-strong} and
\eqref{eq:bridge-time-density-comparison} yield
\begin{align}
 \theta_Q(\lambda,r)
 &\ge1-\rho(T,r)^{c_*}\notag\\
 &\ge\min\{1,c_*\}\bigl(1-\rho(T,r)\bigr)\notag\\
 &\ge\min\{1,c_*\}e^{-2T}(r-T),
 \qquad T<r\le2T.
 \label{eq:bridge-lifespan-sharpness-transfer}
\end{align}
For the last inequality, the mean-value theorem gives
$e^{-T}-e^{-r}\ge e^{-2T}(r-T)$, and the denominator in
$1-\rho(T,r)=(e^{-T}-e^{-r})/(1-e^{-r})$ is at most one.  The endpoint
$r=T$ is immediate, proving the claimed local lifespan bound.

Finally, generalised inversion and strict decrease of $\lambda_c^Q$ give,
for every $\lambda>0$,
\[
 t<t_c^Q(\lambda)\quad\Longrightarrow\quad
 \lambda<\lambda_c^Q(t).
\]
The density-subcritical estimate already proved therefore supplies the
exponential cluster tail in the lifespan-subcritical regime.

\paragraph{Funding.}
CM's research was partly funded by Deutsche Forschungsgemeinschaft
(DFG, German Research Foundation) under grant no.\ 443916008 in the
Priority Programme \emph{Random Geometric Systems} (SPP~2265).

\printbibliography

\end{document}